\newtheorem{theorem}{Theorem}[section]
\newtheorem{lemma}{Lemma}[section]
\theoremstyle{definition}
\theoremstyle{remark}
\newtheorem{remark}{Remark}[section]
\numberwithin{equation}{section} 
\theoremstyle{plain}
\newtheorem{proposition}{Proposition}[section]
\newtheorem{example}{Example}[section]
\begin{document}

\title[Return- and hitting-time distributions]{\textbf{Return- and hitting-time distributions of small sets in infinite measure preserving systems}}
%BeginExpansion
\vspace{0.2cm}
%EndExpansion

\author{Simon Rechberger}
\author{Roland Zweim\"{u}ller}
\address{Fakult\"{a}t f\"{u}r Mathematik, Universit\"{a}t Wien, Oskar-Morgenstern-Platz
1, 1090 Vienna, Austria }
\email{rechberger\_simon@gmx.at ;  roland.zweimueller@univie.ac.at}
\urladdr{http://www.mat.univie.ac.at/\symbol{126}zweimueller/ }
\subjclass[2000]{Primary 28D05, 37A40, 37A50, 37E05, 60F05.}
\keywords{infinite invariant measure, limit distribution, null-recurrent processes, rare
events, indifferent fixed points, return-time statistics, hitting-time statistics}

\begin{abstract}
We study convergence of return- and hitting-time distributions of small sets
$E_{k}$ with $\mu(E_{k})\rightarrow0$ in recurrent ergodic dynamical systems
preserving an infinite measure $\mu$. Some properties which are easy in finite
measure situations break down in this null-recurrent setup. However, in the
presence of a uniform set $Y$ with wandering rate regularly varying of index
$1-\alpha$ with $\alpha\in(0,1]$, there is a scaling function suitable for all
subsets of $Y$. In this case, we show that return distributions for the
$E_{k}$ converge iff the corresponding hitting time distributions do, and we
derive an explicit relation between the two limit laws. Some consequences of
this result are discussed. In particular, this leads to improved sufficient
conditions for convergence to $\mathcal{E}^{1/\alpha}\,\mathcal{G}_{\alpha}$,
where $\mathcal{E}$ and $\mathcal{G}_{\alpha}$ are independent random
variables, with $\mathcal{E}$ exponentially distributed and $\mathcal{G}%
_{\alpha}$ following the one-sided stable law of order $\alpha$ (and
$\mathcal{G}_{1}:=1$). The same principle also reveals the limit laws
(different from the above) which occur at hyperblic periodic points of
prototypical null-recurrent interval maps. We also derive similar results for
the barely recurrent $\alpha=0$ case.

\end{abstract}
\maketitle

\section{Introduction}

The asymptotic behaviour of return- and hitting-time distributions of (very)
small sets in ergodic probability preserving dynamical systems has been
studied in great detail, and there is now a well-developed theory, both for
specific types of maps and sets, and for general abstract systems.

For infinite measure preserving situations, however, results are scarce. Only
recently some concrete classes of prototypical systems have been studied in
\cite{PS}, \cite{PS2}, and \cite{PSZ2}, where distributional limit theorems
for certain natural sequences of sets were established.

The purpose of the present article is to discuss some basic aspects of return-
and hitting-time limits for asymptotically rare events in the setup of
abstract infinite ergodic theory. After considering questions of scaling, we
discuss to what extent the natural relation between return- and hitting-time
limits which holds in the finite-measure setup carries over to infinite
measures, and how this can be used to prove convergence to specific laws.%
%TCIMACRO{\TeXButton{NL}{\newline}}%
%BeginExpansion
\newline
%EndExpansion
%

%TCIMACRO{\TeXButton{NI}{\noindent}}%
%BeginExpansion
\noindent
%EndExpansion
\textbf{General setup. }Throughout, \emph{all measures are understood to be
}$\sigma$\emph{-finite}. We study \emph{measure preserving transformations}
$T$ (not necessarily invertible) on a measure space $(X,\mathcal{A},\mu)$.
Here $T$ will be \emph{ergodic} and also \emph{conservative} (meaning that
$\mu(A)=0$ for all wandering sets, that is, $A\in\mathcal{A}$ with $T^{-n}A$,
$n\geq0$, pairwise disjoint), and thus \emph{recurrent} (in that $A\subseteq%
%TCIMACRO{\tbigcup _{n\geq1}}%
%BeginExpansion
{\textstyle\bigcup_{n\geq1}}
%EndExpansion
T^{-n}A$ mod $\mu$ for $A\in\mathcal{A}$). Our emphasis will be on the
\emph{infinite measure case}, $\mu(X)=\infty$.

For $T$ such a conservative ergodic measure preserving transformation
(\emph{c.e.m.p.t.}) on $(X,\mathcal{A},\mu)$, and any $Y\in\mathcal{A}$,
$\mu(Y)>0$, we define the \emph{first entrance time }function of $Y$,
$\varphi_{Y}:X\rightarrow\mathbb{N}\cup\{\infty\}$ by $\varphi_{Y}%
(x):=\inf\{n\geq1:T^{n}x\in Y\}$, $x\in X$, and let $T_{Y}x:=T^{\varphi(x)}x$,
$x\in X$. When restricted to $Y$, $\varphi_{Y}$ is called the \emph{first
return time }of $Y$, and $\mu\mid_{Y\cap\mathcal{A}}$ is invariant under the
ergodic \emph{first return map}, $T_{Y}\,$restricted to $Y$. If $\mu
(Y)<\infty$, it is natural to regard $\varphi_{Y}$ as a random variable on the
probability space $(X,\mathcal{A},\mu_{Y})$, where $\mu_{Y}(E):=\mu(Y)^{-1}%
\mu(Y\cap E)$. By Kac' formula, $\int\varphi_{Y}\,d\mu_{Y}=\mu(X)/\mu(Y)$. It
is well known (see \cite{A0}) that, for suitable reference sets $Y$, the
distribution of this variable reflects important features of the system
$(X,\mathcal{A},\mu,T)$.%
%TCIMACRO{\TeXButton{NL}{\newline}}%
%BeginExpansion
\newline
%EndExpansion
%

%TCIMACRO{\TeXButton{NI}{\noindent}}%
%BeginExpansion
\noindent
%EndExpansion
\textbf{Return- and hitting-time distributions for small sets. }Rather than
focusing on a particular set $Y$, the present article studies the behaviour of
such distributions for sequences $(E_{k})$ of sets of (strictly)
\emph{positive} finite measure with $\mu(E_{k})\rightarrow0$. In particular,
these are sequences of \emph{asymptotically rare events}\footnote{One natural
definition of a sequence $(E_{k})_{k\geq1}$ of \emph{asymptotically rare
events} in a possibly infinite measure space $(X,\mathcal{A},\mu)$ is that
$\nu(E_{k})\rightarrow0$ for all probability measures $\nu\ll\mu$. See Remark
2.1 of \cite{Z2018} and \cite{ZFundMath2}.}. As return times to small sets
will typically be very large, the functions $\varphi_{E_{k}}$ need to be
normalized, which will be done using a certain scaling function $\gamma$.

We will thus study the distributions of random variables of the form
$\gamma(\mu(E))\,\varphi_{E}$ on $(E,E\cap\mathcal{A},\mu_{E})$, with $\mu(E)$
small, and call this the \emph{(normalized) return time distribution of} $E$,
\[
\mathsf{law}_{\mu_{E}}[\gamma(\mu(E))\,\varphi_{E}]
\]
where, for $\psi:X\rightarrow X^{\prime}$ any $\mathcal{A}$-$\mathcal{A}%
^{\prime}$-measurable map and $\nu\ll\mu$ a probability on $(X,\mathcal{A})$,
we write \textsf{law}$_{\nu}[\psi]:=\nu\circ\psi^{-1}$. In fact, we can use
any such $\nu$ as an initial distribution, in which case we refer to
\[
\mathsf{law}_{\nu}[\gamma(\mu(E))\,\varphi_{E}]
\]
as the \emph{(normalized) hitting time distribution of} $E$ \emph{(under }%
$\nu$\emph{)}. This leads to two different ways of looking at the
$\varphi_{E_{k}}$ for a sequence $(E_{k})$ as above: \emph{asymptotic return
distribution}s of $(E_{k})$ are limits, as $k\rightarrow\infty$, of
$(\mathsf{law}_{\mu_{E_{k}}}[\gamma(\mu(E_{k}))\,\varphi_{E_{k}}])_{k\geq1}$,
while \emph{asymptotic hitting distribution}s are limits of $(\mathsf{law}%
_{\nu}[\gamma(\mu(E_{k}))\,\varphi_{E_{k}}])_{k\geq1}$ for some fixed $\nu$.
(The latter limits do not depend on the choice of $\nu$, and we often take
$\nu=\mu_{Y}$ for some nice set $Y$, see Section \ref{Sec_RetVersHit} below.)
\emph{Understanding the relation between these two types of limits will be one
central theme of this article}.

It will be convenient to regard the distributions above as measures on
$[0,\infty]$. Accordingly, we let $\mathcal{F}:=\{F:[0,\infty)\rightarrow
\lbrack0,1]$, non-decreasing and right-continuous$\}$ be the set of
sub-probability distribution functions on $[0,\infty)$. For $F$, $F_{n}%
\in\mathcal{F}$ ($n\geq1$) we write $F_{n}\Rightarrow F$ for \emph{vague
convergence}, i.e. $F_{n}(t)\rightarrow F(t)$ at all continuity points of $F$.
For efficiency, we shall also use $F_{n}(t)\Longrightarrow F(t)$ to express
the same thing. (This allows us to use explicit functions of $t$.) If $\sup
F(t)=1$ this is the usual \emph{weak convergence} of probability distribution
functions on $[0,\infty)$.%
%TCIMACRO{\TeXButton{NL}{\newline}}%
%BeginExpansion
\newline
%EndExpansion
%

%TCIMACRO{\TeXButton{NI}{\noindent}}%
%BeginExpansion
\noindent
%EndExpansion
\textbf{Pointwise dual ergodicity and uniform sets.} Some classes of
well-behaved infinite measure preserving systems are characterized by the
existence of distinguished \emph{reference sets} $Y$, $0<\mu(Y)<\infty$, with
special properties. Those are often defined in terms of the \emph{transfer
operator} $\widehat{T}:L_{1}(\mu)\rightarrow L_{1}(\mu)$, with $\int_{X}%
u\cdot(v\circ T)\,d\mu=\int_{X}\widehat{T}u\cdot v\,d\mu$ for all $u\in
L_{1}(\mu)$ and $v\in L_{\infty}(\mu)$. The operator $\widehat{T}$ naturally
extends to $\{u:X\rightarrow\lbrack0,\infty)$ $\mathcal{A}$-measurable$\}$. It
is a linear Markov operator, $\int_{X}\widehat{T}u\,d\mu=\int_{X}u\,d\mu$ for
$u\geq0$. The m.p.t. $T$ is conservative and ergodic if and only if
$\sum_{k\geq0}\widehat{T}^{k}u=\infty$ a.e. for all $u\in L_{1}^{+}%
(\mu):=\{u\in L_{1}(\mu):u\geq0$ and $\mu(u)>0\}$ or (equivalently) all
$u\in\mathcal{D}(\mu):=\{u\in L_{1}(\mu):u\geq0$, $\mu(u)=1\}$. By invariance
of $\mu$ we have $\widehat{T}1_{X}=1_{X}$.

A c.e.m.p.t. $T$ on the space $(X,\mathcal{A},\mu)$ is said to be
\emph{pointwise dual ergodic} (cf. \cite{A0}, \cite{A1}) if there is some
sequence $(a_{n})$ in $(0,\infty)$ such that
\begin{equation}
\frac{1}{a_{n}}\sum_{k=0}^{n-1}\widehat{T}^{k}u\longrightarrow\mu(u)\cdot
1_{X}\text{ \quad}%
\begin{array}
[c]{c}%
\text{a.e. on }X\text{ as }n\rightarrow\infty\text{, for every}\\
u\in L_{1}(\mu)\text{ with }\mu(u)\neq0\text{.}%
\end{array}
\label{Eq_Def_pde}%
\end{equation}
In this case, $(a_{n})$ (unique up to asymptotic equivalence, with
$a_{n}\rightarrow\infty$) is called a \emph{return sequence} of $T$. W.l.o.g.
we will assume throughout that $a_{n}=a_{T}(n)$ for some strictly increasing
continuous $a_{T}:[0,\infty)\rightarrow\lbrack0,\infty)$ with $a_{T}(0)=0$.
For convenience, we shall call any homeomorphism of $[0,\infty)$ a
\emph{scaling function}. Note that in case $\mu(X)=\infty$, we always have
$a_{T}(s)=o(s)$ as $s\rightarrow\infty$. Letting $b_{T}$ denote the inverse
function of $a_{T}$, we thus see that $s=o(b_{T}(s))$ as $s\rightarrow\infty$.
For later use we define, for a pointwise dual ergodic system, another scaling
function $\gamma_{T}:[0,\infty)\rightarrow\lbrack0,\infty)$ with $\gamma
_{T}(s)=o(s)$ as $s\searrow0$ via
\begin{equation}
\gamma_{T}(0):=0\text{ \quad and \quad}\gamma_{T}(s):=1/b_{T}(1/s)\text{ \quad
for }s>0\text{.}\label{Eq_DefGammaT}%
\end{equation}
By Egorov's theorem, the convergence in (\ref{Eq_Def_pde}) is uniform on
suitable sets (depending on $u$) of arbitrarily large measure. It is useful to
identify specific pairs $(u,Y)$, with $u\in\mathcal{D}(\mu)$ and
$Y\in\mathcal{A}$, $0<\mu(Y)<\infty$, such that
\begin{equation}
\left\Vert 1_{Y}\cdot\left(  \frac{1}{a_{n}}\sum_{k=0}^{n-1}\widehat{T}%
^{k}u-1_{X}\right)  \right\Vert _{\infty}\longrightarrow0\text{ \quad as
}n\rightarrow\infty\text{,}\label{Eq_Def_UnifSet}%
\end{equation}
in which case we shall refer to $Y$ as a $u$\emph{-uniform set} (compare
\cite{A0}, \cite{T4}). A set $Y$ which is $\mu(Y)^{-1}\cdot1_{Y}$-uniform is
called a \emph{Darling-Kac (DK) set}, cf. \cite{A0}, \cite{A2}. The existence
of a uniform set implies pointwise dual ergodicity (as in Proposition 3.7.5 of
\cite{A0}), and the $a_{n}$ in (\ref{Eq_Def_UnifSet}) then form a return sequence.

Several basic classes of infinite measure preserving systems, including Markov
shifts and other Markov maps with good distortion properties (see \cite{A0},
\cite{A2}, and \cite{T3}), as well as various non-Markovian interval maps (see
\cite{Z2}, \cite{Z3}), are known to possess DK-sets.

Finer probabilistic statements about pointwise dual ergodic systems usually
require $a_{T}$ to be \emph{regularly varying} with \emph{index} $\alpha
\in\lbrack0,1]$ (written $a_{T}\in\mathcal{R}_{\alpha}$), meaning that for
every $c>0$, $a_{T}(ct)/a_{T}(t)\rightarrow c^{\alpha}$ as $t\rightarrow
\infty$ (see \cite{BGT}). The asymptotics of $a_{T}$ is intimately related to
the return distribution \textsf{law}$_{\mu_{Y}}[\varphi_{Y}]$ of any of its
uniform sets $Y$: Write $q_{n}(Y):=\mu_{Y}(\varphi_{Y}>n)$, $n\geq0$, for the
\emph{tail probabilities} of $\varphi_{Y}$, and define the \emph{wandering
rate }$(w_{N}(Y))_{N\geq1}$\emph{\ of} $Y$ as the sequence of partial sums
$w_{N}(Y):=\mu(Y)\sum_{n=0}^{N-1}q_{n}(Y)$. By Theorem 5.1 of \cite{AZ}, any
uniform set $Y$ has \emph{minmal wandering rate} (meaning that \underline
{$\lim$}$_{N\rightarrow\infty}w_{N}(A)/w_{N}(Y)\geq1$ whenever $0<\mu
(A)<\infty$), and satisfies Aaronson's \emph{asymptotic renewal equation} (as
in Proposition 3.8.6 of \cite{A0}). Combining the latter with Karamata's
Tauberian Theorem (KTT, see Corollary 1.7.3 in \cite{BGT}) shows that for
$\alpha\in\lbrack0,1]$, one has
\begin{equation}
(w_{N})\in\mathcal{R}_{1-\alpha}\quad\text{iff\quad}a_{T}\in\mathcal{R}%
_{\alpha},
\end{equation}
in which case
\begin{equation}
a_{T}(n)\sim\frac{1}{\Gamma(2-\alpha)\Gamma(1+\alpha)}\,\frac{n}{w_{n}%
(Y)}\text{ \quad as }n\rightarrow\infty\text{.}\label{Eq_AsyRenewalEqn}%
\end{equation}
Whenever $a_{T}\in\mathcal{R}_{\alpha}$, we can and will assume that
$a_{T}(n)$ is given by the right-hand expression of (\ref{Eq_AsyRenewalEqn})
for some fixed uniform set $Y$, so that $(a_{T}(n)/n)_{n\geq1}$ is strictly
decreasing. The latter implies that $(a_{T}(n))_{n\geq1}$ is subadditive,
\begin{equation}
a_{T}(m+n)\leq a_{T}(m)+a_{T}(n)\text{ \quad for }m,n\geq1\text{.}%
\label{Eq_AnSubadd}%
\end{equation}
%

%TCIMACRO{\TeXButton{VSs}{\vspace{0.3cm}}}%
%BeginExpansion
\vspace{0.3cm}%
%EndExpansion
%

%TCIMACRO{\TeXButton{NI}{\noindent}}%
%BeginExpansion
\noindent
%EndExpansion
\textbf{The concrete limit theorems of \cite{PS}, \cite{PS2}, and
\cite{PSZ2}.} The results of \cite{PS} and \cite{PS2}, were the starting point
for the present investigation of return- and hitting-time limits in
null-recurrent situations. They apply to certain skew-products which are
\textquotedblleft barely recurrent\textquotedblright\ in that $(w_{N}%
)\in\mathcal{R}_{1}$ (corresponding to $\alpha=0$ above). In that case, only
the seriously distorted function $a_{T}(\varphi_{E})$ of the return-time
$\varphi_{E}$ can have a nontrivial limit, see the discussion at the end of
Section \ref{Sec_RetVersHit} below. For natural sequences $(E_{k})$ of sets,
those variables were shown to converge to the law with distribution function
$H^{\ast}(t):=t/(1+t)$, $t\geq0$.

The skew-product structure was exploited through the use of local limit
theorems. That approach has been extended to some (classical probabilistic)
$\alpha\in\lbrack0,1/2]$ situations in \cite{PSZ}, and further work on
skew-products has been done in \cite{Yassine1}. To go beyond skew-products and
the local limit technique, the notion of $\mathcal{U}$-uniform sets was
introduced (see Remark \ref{Rem_WhyBetter} below) in \cite{PSZ2}, which dealt
with $\alpha\in(0,1]$ situations. For certain natural sequences $(E_{k})$,
suitably normalized return- (and hitting-) times $\gamma(\mu(E_{k}%
))\,\varphi_{E_{k}}$ were shown to converge to a law best expressed as the
distribution of the random variable
\begin{equation}
\mathcal{H}_{\alpha}:=\mathcal{E}^{\frac{1}{\alpha}}\,\mathcal{G}_{\alpha
}\text{ ,\quad}\alpha\in(0,1]\text{,}%
\end{equation}
where $\mathcal{E}$ and $\mathcal{G}_{\alpha}$ are independent random
variables, with $\mathcal{E}$ exponentially distributed ($\Pr[\mathcal{E}%
>t]=e^{-t}$ for $t\geq0$) and $\mathcal{G}_{\alpha}$, $\alpha\in(0,1)$,
following the one-sided stable law of order $\alpha$ (so that $\mathbb{E}%
[\exp(-s\mathcal{G}_{\alpha})]=\exp(-s^{\alpha})$ for $s\geq0$), while
$\mathcal{G}_{1}=1$. We will use $H_{\alpha}(t):=\Pr[\mathcal{H}_{\alpha}\leq
t]$, $t\geq0$, to denote the distribution function of $\mathcal{H}_{\alpha}$.%
%TCIMACRO{\TeXButton{NL}{\newline}}%
%BeginExpansion
\newline
%EndExpansion
%

%TCIMACRO{\TeXButton{NI}{\noindent}}%
%BeginExpansion
\noindent
%EndExpansion
\textbf{Outline of results.} In contrast to references \cite{PS}, \cite{PS2},
\cite{PSZ2} mentioned before, which study specific classes of systems and
particular types of sequences $(E_{k})$, the present paper discusses the
asymptotics of general asymptotically rare sequences $(E_{k})$ in an abstract setup.

We first discuss the basic question of how to normalize the functions
$\varphi_{E}$, and show that it is impossible to find a scaling function
$\gamma$ such that $\gamma(\mu(E))$ captures the order of magnitude of
$\varphi_{E}$ for all (small) sets $E$ (Theorem \ref{T_NoUniversalScale1}).
However, if $T$ admits a uniform set $Y$ with regularly varying return
sequence of index $\alpha>0$, then there is some $\gamma=\gamma_{T}$ which
works for every $E$ contained in $Y$ (Theorem \ref{T_TightnessInsideDK}). A
similar statement applies in the case $\alpha=0$ if we consider $a_{T}%
(\varphi_{E})$ (Theorem \ref{P_NonlinTightScale}).

Next, we turn to the relation between asymptotic return distributions and
asymptotic hitting distributions. In the finite measure case, it is well known
that one exists iff the other does, and that there is a simple explicit
relation between them (\cite{HaydnLacVai05}, see Section 4 below). In contrast
to this, we observe that there is no infinite measure preserving system in
which convergence of return-time distributions does, in general, entail
nontrivial asymptotics of hitting-time distributions (Theorem
\ref{Thm_SameRetDistButDivergentHitDist}).

Nonetheless, we prove (Theorem \ref{T_Ret_versus_Hit}) that, in the setup of
systems with a uniform set $Y$ and regular variation with $\alpha>0$, for
every asymptotically rare sequence $(E_{k})$ \emph{inside} $Y$, the
return-time distributions converge iff the hitting-time distributions
converge, and also clarify the relation between the respective limit laws. In
fact, we establish a stronger version of this principle which also covers the
distorted variables $a_{T}(\varphi_{E})$ of the $\alpha=0$ case, Theorem
\ref{T_Ret_versus_Hit2}, which is the central result of our paper.

The latter theorem allows us to precisely characterize (in Theorems
\ref{T_Cge_to_H_alpha} and \ref{T_Cge_to_H_alphathetaStar}) sequences
$(E_{k})$ inside $Y$, which exhibit convergence to the specific limit laws
$H_{\alpha} $ and $H^{\ast}$ that occurred in \cite{PS}, \cite{PS2}, and
\cite{PSZ2}. This also leads to an improved version of the abstract limit
theorem of \cite{PSZ2}, see Theorem \ref{T_SuffForCgeToHAlpha} and Remark
\ref{Rem_WhyBetter} below, which provides sufficient conditions for
convergence to the laws $H_{\alpha}$. We then go beyond these previously
encountered limit distributions, identifying new laws $H_{\alpha,\theta}$ for
asymptotic hitting times. Theorem \ref{T_Cge_to_H_alphatheta} gives abstract
sufficient conditions for their occurrence, and Example \ref{Ex_B} shows that
this result applies to cylinders shrinking to hyperbolic fixed points of
prototypical null-recurrent interval maps with indifferent fixed points.

We also extend the method to include the barely recurrent $\alpha=0$ case.
While previous results yield convergence to $H^{\ast}$ for specific classes of
systems with skew-product structure, we give sufficient conditions for
convergence to this law in the setup of general pointwise dual ergodic systems
with slowly varying return sequences in Theorem
\ref{T_SuffForCgeToHAlphaThetaStar}. The same result contains conditions for
convergence to new limit laws $H_{\theta}^{\ast}$, and Examples \ref{Ex_C} and
\ref{Ex_D} illustrate the occurrence of both $H^{\ast}$ and $H_{\theta}^{\ast
}$ in basic one-dimensional systems.%
%TCIMACRO{\TeXButton{NL}{\newline}}%
%BeginExpansion
\newline
%EndExpansion
%

%TCIMACRO{\TeXButton{NI}{\noindent}}%
%BeginExpansion
\noindent
%EndExpansion
\textbf{Acknowledgement.} R.Z. thanks Jon Aaronson, Fran\c{c}oise P\`{e}ne,
Beno\^{\i}t Saussol, and Maximilian Thaler for highly motivating and inspiring
discussions.%
%TCIMACRO{\TeXButton{NL}{\newline}}%
%BeginExpansion
\newline
%EndExpansion

\section{How to normalize return-times of small sets\label{Sec_Scales}}

We collect some facts regarding the order of magnitude of a return-time
variable $\varphi_{E}$, focusing on its relation to the measure of the set
$E$. We first record some basic observations to point out some of the
difficulties which are inevitable when dealing with infinite measures. We then
formulate the main results of this section.%
%TCIMACRO{\TeXButton{NL}{\newline}}%
%BeginExpansion
\newline
%EndExpansion
%

%TCIMACRO{\TeXButton{NI}{\noindent}}%
%BeginExpansion
\noindent
%EndExpansion
\textbf{Scaling return-times in finite measure systems.} As a warm-up, assume
first that $(X,\mathcal{A},\mu,T)$ is ergodic and measure preserving, with
$\mu(X)<\infty$. Kac' formula $\int_{E}\varphi_{E}\,d\mu_{E}=\mu(X)/\mu(E)$
for the expectation of the return-time of an arbitrary set $E\in\mathcal{A}$
with $\mu(E)>0$ not only shows that $\mu(E)\,\varphi_{E}$ is the canonical
choice if we wish to use \emph{normalized return times}, but also yields the
simple estimate $\mu_{E}(\mu(E)\,\varphi_{E}>t)\leq1/t$, $t>0$. The latter can
be read as an explicit version of the trivial statement that the family of all
normalized return distributions,
\begin{equation}
\left\{  \text{\textsf{law}}_{\mu_{E}}[\mu(E)\,\varphi_{E}]:E\in
\mathcal{A}\text{, }\mu(E)>0\right\}  \text{, is tight.}%
\label{Eq_TrivialTightnessFini}%
\end{equation}
We record an obvious consequence of this by also stating that for every
$\eta>0$,
\begin{equation}
\left\{  \text{\textsf{law}}_{\mu_{E}}[\varphi_{E}]:E\in\mathcal{A}\text{,
}\mu(E)\geq\eta\right\}  \text{ is tight.}\label{Eq_TghtLargeSetsFini}%
\end{equation}
The relevance of these trivialities for the present paper lies in the fact
that they both break down when $\mu(X)=\infty$.%
%TCIMACRO{\TeXButton{NL}{\newline}}%
%BeginExpansion
\newline
%EndExpansion
%

%TCIMACRO{\TeXButton{NI}{\noindent}}%
%BeginExpansion
\noindent
%EndExpansion
\textbf{Scaling return-times in infinite measure systems - difficulties.} Now
let $(X,\mathcal{A},\mu,T)$ be a c.e.m.p.t. system with $\mu(X)=\infty$. We
are interested in the return distributions of sets of positive finite measure.
Kac' formula remains valid in that $\int_{E}\varphi_{E}\,d\mu_{E}=\infty$ for
every set $E\in\mathcal{A}$ with $0<\mu(E)<\infty$, but it no longer provides
us with a canonical normalization for $\varphi_{E}$. Indeed, the situation is
much more complicated than it is in the finite measure regime.

\begin{proposition}
[\textbf{Basic (non-)tightness properties of return distributions}%
]\label{P_NoTightnesssLargeSets}Let $T$ be a c.e.m.p.t. on $(X,\mathcal{A}%
,\mu)$ with $\mu(X)=\infty$. \newline\newline\textbf{a)} The family of return
distributions of large sets $E$ is not tight: For every $\eta>0$,
\begin{equation}
\left\{  \text{\textsf{law}}_{\mu_{E}}[\varphi_{E}]:E\in\mathcal{A}\text{,
}\mu(E)\geq\eta\right\}  \text{ is \emph{not} tight.}%
\end{equation}
\newline\textbf{b)} Locally, the family of return distributions of large sets
$E$ is tight: Let $Y\in\mathcal{A}$ with $0<\mu(Y)<\infty$. Then for every
$\eta>0$,
\begin{equation}
\left\{  \text{\textsf{law}}_{\mu_{E}}[\varphi_{E}]:E\in Y\cap\mathcal{A}%
\text{, }\mu(E)\geq\eta\right\}  \text{ is tight.}%
\end{equation}
\newline\textbf{c)} Even locally, the family of return distributions of
arbitrary sets $E$ with normalization $\mu(E)$ is not tight: Let
$Y\in\mathcal{A}$ with $0<\mu(Y)<\infty$. Then
\begin{equation}
\left\{  \text{\textsf{law}}_{\mu_{E}}[\mu(E)\,\varphi_{E}]:E\in
Y\cap\mathcal{A}\text{, }\mu(E)>0\right\}  \text{ is \emph{not} tight.}%
\end{equation}

\end{proposition}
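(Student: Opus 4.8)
\emph{Part (a).} Fix $\eta>0$ and pick any $Y_{0}\in\mathcal{A}$ with $0<\mu(Y_{0})<\infty$. The plan is to work in the Kakutani skyscraper over $Y_{0}$: with $(Y_{0})_{j}:=T^{j}\!\left(Y_{0}\cap\{\varphi_{Y_{0}}>j\}\right)$ the sets $(Y_{0})_{j}$ ($j\geq0$) are pairwise disjoint, exhaust $X$ mod $\mu$, and satisfy $\mu\!\left((Y_{0})_{j}\right)=\mu(Y_{0})\,q_{j}(Y_{0})$ for $j\geq1$. Since $\sum_{n\geq1}q_{n}(Y_{0})=\infty$ in the infinite-measure case (by Kac' formula this series is $\mu(X)/\mu(Y_{0})-1$) and $q(Y_{0})$ is non-increasing, $\sum_{j\geq1}q_{jN}(Y_{0})\geq N^{-1}\sum_{n\geq N}q_{n}(Y_{0})=\infty$ for every $N\geq1$. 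Given $M\geq1$, put $N:=M+1$ and $E:=\bigsqcup_{j=1}^{K}(Y_{0})_{jN}$ with $K$ large enough that $\mu(E)=\mu(Y_{0})\sum_{j=1}^{K}q_{jN}(Y_{0})\geq\eta$. The spacing forces $\varphi_{E}\geq N$ everywhere on $E$: along a forward orbit the skyscraper-level increases by one at each step until a first return to $Y_{0}$ resets it to $0$, after which it climbs again, so starting from a level in $\{N,2N,\dots,KN\}$ one cannot return to that set of levels in fewer than $N$ steps. Hence $\mu_{E}(\varphi_{E}>M)=1$ while $\mu(E)\geq\eta$, and since $M$ was arbitrary, the family in (a) is not tight.

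\emph{Part (b).} Let $E\in Y\cap\mathcal{A}$ with $\mu(E)\geq\eta$. The key point is that the first-return map $T_{Y}$ is an ergodic probability preserving transformation of $(Y,\mu_{Y})$ (as $\mu(Y)<\infty$), and that for $x\in E$,
\[
\varphi_{E}(x)=\sum_{i=0}^{\psi_{E}(x)-1}\varphi_{Y}\!\left(T_{Y}^{i}x\right),\qquad\psi_{E}(x):=\min\{m\geq1:T_{Y}^{m}x\in E\}.
\]
Kac' formula in the finite system gives $\mathbb{E}_{\mu_{E}}[\psi_{E}]=\mu_{Y}(E)^{-1}=\mu(Y)/\mu(E)\leq\mu(Y)/\eta$, so $\mu_{E}(\psi_{E}>L)\leq\mu(Y)/(\eta L)$. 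On $\{\psi_{E}\leq L\}$ we have $\varphi_{E}\leq\sum_{i=0}^{L-1}\varphi_{Y}\circ T_{Y}^{i}$, whence, using $\mu_{E}\leq(\mu(Y)/\eta)\,\mu_{Y}$ (valid since $E\subseteq Y$ and $\mu(E)\geq\eta$) together with the $T_{Y}$-invariance of $\mu_{Y}$,
\[
\mu_{E}\!\left(\varphi_{E}>M,\ \psi_{E}\leq L\right)\leq\sum_{i=0}^{L-1}\mu_{E}\!\left(\varphi_{Y}\circ T_{Y}^{i}>M/L\right)\leq\frac{\mu(Y)}{\eta}\,L\,\mu_{Y}\!\left(\varphi_{Y}>M/L\right).
\]
Given $\epsilon>0$, first choose $L$ with $\mu(Y)/(\eta L)<\epsilon/2$, then — since $\varphi_{Y}<\infty$ a.e. — choose $M$ with $(\mu(Y)/\eta)\,L\,\mu_{Y}(\varphi_{Y}>M/L)<\epsilon/2$; adding the two estimates yields $\mu_{E}(\varphi_{E}>M)<\epsilon$ uniformly over the family, i.e. tightness.

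\emph{Part (c).} Here I would take $E=B_{N}:=Y\cap\{\varphi_{Y}>N\}$; since $\varphi_{Y}$ is not essentially bounded, $\mu(B_{N})=\mu(Y)\,q_{N}(Y)>0$ for every $N$. As $B_{N}\subseteq Y$, the factorization of Part (b) applies, and already its first summand gives $\varphi_{B_{N}}\geq\varphi_{Y}>N$ on $B_{N}$, so $\mu_{B_{N}}\!\left(\mu(B_{N})\,\varphi_{B_{N}}>\mu(Y)\,N\,q_{N}(Y)\right)=1$; if $\limsup_{N}N\,q_{N}(Y)=\infty$ this already rules out tightness. In general $N\,q_{N}(Y)$ may stay bounded, and one must then show that $\varphi_{B_{N}}$ exceeds $N$ by a large factor with $\mu_{B_{N}}$-probability bounded away from $0$. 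For this I would use the Kac identity $\int_{B_{N}}\!\big(\sum_{i<\psi_{B_{N}}}f\circ T_{Y}^{i}\big)\,d\mu_{Y}=\int_{Y}f\,d\mu_{Y}$ in $(Y,\mu_{Y},T_{Y})$, applied to $f=\varphi_{Y}\wedge N$: writing $U_{N}:=\mu(B_{N})\sum_{i<\psi_{B_{N}}}(\varphi_{Y}\wedge N)\circ T_{Y}^{i}$, one gets $U_{N}\leq\mu(B_{N})\,\varphi_{B_{N}}$ and $\mathbb{E}_{\mu_{B_{N}}}[U_{N}]=w_{N}(Y)\to\infty$, so the natural lower bound for $\mu(B_{N})\varphi_{B_{N}}$ has a finite, diverging mean.

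\emph{The main obstacle} I anticipate is to convert such statements about the \emph{size} of $\mu(E)\varphi_{E}$ into genuine non-tightness of its \emph{law}. By Kac' formula every member of the family has infinite mean, $\mathbb{E}_{\mu_{E}}[\varphi_{E}]=\mu(X)/\mu(E)=\infty$, so divergence of a mean is useless; a diverging \emph{quantile} is required. I expect one must control the whole distribution of $U_{N}$ (equivalently of $\sum_{i<\psi_{B_{N}}}(\varphi_{Y}\wedge N)\circ T_{Y}^{i}$) — e.g.\ via a second-moment (Paley--Zygmund-type) bound keeping it of order its mean with positive probability, or by forcing the number of excursions accumulated before the return to $E$ to be large with non-vanishing probability — and pinning down the correct such input, together with the bookkeeping relating the ensuing bound to the wandering rate $w_{N}(Y)$, is where I expect the real work to lie.
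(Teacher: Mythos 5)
Your part (b) is correct and is essentially the paper's own argument (decompose $\varphi_E$ into excursions from $Y$, Kac plus Markov for the induced entrance time, union bound using $T_Y$-invariance of $\mu_Y$). Your part (a) is also the right idea — widely spaced levels over a set of finite measure — but as written it leans on the Kakutani skyscraper with \emph{forward} images $(Y_0)_j=T^j\bigl(Y_0\cap\{\varphi_{Y_0}>j\}\bigr)$. Since $T$ is only assumed measure preserving, not invertible, these forward images need not be measurable, need not be pairwise disjoint, and need not satisfy $\mu\bigl((Y_0)_j\bigr)=\mu(Y_0)q_j(Y_0)$; worse, a point may lie in several ``levels'' at once, so the assertion that the level increases by one along the orbit until a reset (hence $\varphi_E\geq N$ on $E$) is not justified in this generality. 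The paper's variant repairs this: use the entrance-time sets $Y_n:=Y_0^c\cap\{\varphi_{Y_0}=n\}$, which are honestly measurable and disjoint, have the same measures $\mu(Y_0)q_n(Y_0)$, and satisfy $TY_{n+1}\subseteq Y_n$, so that on $A_N:=\bigcup_{j\geq1}Y_{jN}$ one really has $\varphi_{A_N}\geq N$; your divergence argument for $\sum_j q_{jN}$ then gives $\mu(A_N)=\infty$ and the proof of (a) goes through (the paper additionally uses nonatomicity to extract a subset of measure exactly $\eta$).

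Part (c) contains the genuine gap, which you yourself flag. Divergence of $\mathbb{E}_{\mu_{B_N}}[U_N]=w_N(Y)$ proves nothing about the laws (as you note, every $\varphi_E$ already has infinite mean), there is no general second-moment control available for a Paley--Zygmund argument, and the specific sequence $B_N=Y\cap\{\varphi_Y>N\}$ need not even witness non-tightness, e.g.\ when $Nq_N(Y)\rightarrow0$. The paper's proof supplies the missing mechanism, and it is a different construction: since $\int_Y\varphi_Y\,d\mu=\infty$, the ergodic theorem for $T_Y$ gives $m^{-1}\sum_{j<m}\varphi_Y\circ T_Y^j\rightarrow\infty$ a.e.\ on $Y$; by an Egorov argument there are $Z\subseteq Y$ with $\mu(Y\setminus Z)<1/(2k)$ and $M>k$ with $\sum_{j<m}\varphi_Y\circ T_Y^j>2mk$ on $Z$ for all $m\geq M$; then a Rokhlin tower of height $M$ for the induced map $T_Y$ produces a rung $E_k:=T_Y^{-i_0}F$ whose measure is pinned between $1/(M+1)$ and $1/M$ \emph{and} whose induced return time satisfies $\varphi_{E_k}^Y\geq M$ on $E_k$. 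On $Z\cap E_k$ (which carries $\mu_{E_k}$-mass at least $(k-1)/k$ for a well-chosen rung) one gets $\mu(E_k)\varphi_{E_k}>2Mk\,\mu(E_k)>k$. It is exactly this simultaneous control — the Rokhlin height ties $\mu(E_k)$ to $1/M$ from below while forcing at least $M$ excursions before returning, so that the Egorov bound on Birkhoff sums becomes a diverging \emph{quantile} of $\mu(E_k)\varphi_{E_k}$ — that converts ``long accumulated excursions'' into non-tightness, and it is the step your proposal leaves open.
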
%

%TCIMACRO{\TeXButton{VSs}{\vspace{0.1cm}}}%
%BeginExpansion
\vspace{0.1cm}%
%EndExpansion

Statement c) of the proposition confirms that $\mu(E)$ is not an appropriate
normalizing factor. In Theorem \ref{T_TightnessInsideDK} below we identify,
under additional assumptions, a scaling function $\gamma:[0,\infty
)\rightarrow\lbrack0,\infty)$ for which $\gamma(\mu(E))$ gives a suitable
normalization, at least inside certain reference sets $Y$. Call $\gamma$ a
\emph{tight scale for return times in} $Y$ if
\begin{equation}
\left\{  \text{\textsf{law}}_{\mu_{E}}[\gamma(\mu(E))\,\varphi_{E}]:E\in
Y\cap\mathcal{A}\text{, }\mu(E)>0\right\}  \text{ is tight.}%
\label{Eq_DefTightScale}%
\end{equation}
We will show first that any $Y$ admits a tight scale.

\begin{proposition}
[\textbf{Existence of tight scales for arbitrary }$Y$]%
\label{P_ExTightScalesAnyY}Let $T$ be a c.e.m.p.t. on $(X,\mathcal{A},\mu)$
and $Y\in\mathcal{A}$ with $0<\mu(Y)<\infty$. Then there exists a tight scale
$\gamma$ for return times in $Y$.
\end{proposition}

There is a good reason for restricting to subsets of some fixed $Y$ in the
definition of a tight scale: there never is a scaling function $\gamma$ which
works for every set $Y$ of positive finite measure. We shall prove

\begin{theorem}
[\textbf{No universal tight scale for return times}]%
\label{T_NoUniversalScale1}Let $T$ be a c.e.m.p.t. on $(X,\mathcal{A},\mu)$
with $\mu(X)=\infty$, and $\gamma$ a scaling function. Then there is some
$Y\in\mathcal{A}$ with $\mu(Y)=1$ such that $\gamma$ is \emph{not} a tight
scale for return times in $Y$.
\end{theorem}%

%TCIMACRO{\TeXButton{VSs}{\vspace{0.3cm}}}%
%BeginExpansion
\vspace{0.3cm}%
%EndExpansion

Observe next that if $\gamma$ is such a tight scale for $Y$, then any scaling
function $\widetilde{\gamma}$ with $\widetilde{\gamma}(s)=o(\gamma(s))$ as
$s\searrow0$ kills return time functions of small sets in $Y$ in that
$\mu_{E_{k}}(\gamma(\mu(E_{k}))\,\varphi_{E_{k}}\leq t)\Rightarrow1 $ for all
$t>0$ whenever $(E_{k})$ is a sequence in $Y\cap\mathcal{A}$ with $\mu
(E_{k})\rightarrow0$. But, naturally, we will mostly be interested in a
scaling function $\gamma$ which is not only tight, but also a \emph{nontrivial
scale\ for return times in} $Y$ in that%
\begin{gather}
\text{there is a sequence }(E_{k})\text{ of asymptotically rare events in
}Y\text{ s.t.}\label{Eq_NonTrivialSequenceForGamma}\\
\underline{\lim}_{k\rightarrow\infty}\,\mu_{E_{k}}(\gamma(\mu(E_{k}%
))\,\varphi_{E_{k}}>t^{\ast})>0\text{ \quad for some }t^{\ast}>0\text{.}%
\nonumber
\end{gather}
For the sake of completeness we include the very easy observation that there
are always sets with exceptionally short returns, which elude any given scale function:

\begin{proposition}
[\textbf{Sets with very short returns}]\label{Prop_VeryTrivialProp}Let $T$ be
a c.e.m.p.t. on $(X,\mathcal{A},\mu)$, and $\gamma:[0,\infty)\rightarrow
\lbrack0,\infty)$ a scaling function. Assume that $Y\in\mathcal{A}$ satisfies
$\mu(Y)>0$, then there are sets $E_{k}\in Y\cap\mathcal{A}$, $k\geq1$, such
that $0<\mu(E_{k})\rightarrow0$ and
\begin{equation}
\mu_{E_{k}}(\gamma(\mu(E_{k}))\,\varphi_{E_{k}}\leq t)\longrightarrow1\text{
\quad for }t>0\text{.}\label{Eq_jvkdjhfasgfjadfsd}%
\end{equation}

\end{proposition}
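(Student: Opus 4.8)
The plan is to construct, inside any given $Y$ with $\mu(Y)>0$, a sequence of small subsets $E_k$ whose first-return time is identically $1$ on $E_k$, which makes the normalized return time $\gamma(\mu(E_k))\,\varphi_{E_k}$ equal to $\gamma(\mu(E_k))\to 0$ pointwise, so that \eqref{Eq_jvkdjhfasgfjadfsd} holds trivially. The natural way to produce a set on which $\varphi_E\equiv 1$ is to find a set $A$ of positive finite measure with $A\cap T^{-1}A$ of positive measure, and take $E\subseteq A\cap T^{-1}A$; then every point of $E$ returns to $E$? — no, returns to $A$ after one step, not necessarily to $E$. The cleaner route: recall that $T$ is conservative, hence recurrent, so for the set $Y$ we have $Y\subseteq\bigcup_{n\geq 1}T^{-n}Y$ mod $\mu$, and in particular the set $Y_1:=Y\cap T^{-1}Y$ has positive measure whenever $Y\cap T^{-1}Y$ is non-null — but that is exactly the set of points of $Y$ with $\varphi_Y=1$, which could a priori be null. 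So instead I would use the first-return map $T_Y$ on $Y$: it is a c.e.m.p.t. on $(Y,Y\cap\mathcal A,\mu\!\mid_Y)$, hence recurrent on $Y$, so $Y\cap T_Y^{-1}Y'$ has positive measure for any positive-measure $Y'\subseteq Y$; applying this with $Y'=Y$ gives that the set $B:=\{y\in Y: T_Y y\in Y\}$ has full measure in $Y$ (it is all of $Y$ mod $\mu$ by definition of $T_Y$), which is not yet what I want either.

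Let me reorganize around the key construction. Since $\mu(Y)>0$ and $\mu$ is $\sigma$-finite, I can find measurable subsets of $Y$ of arbitrarily small positive measure. The point is to arrange $\varphi_{E_k}\equiv 1$ on $E_k$. Fix any $\delta>0$; I claim there is $E\subseteq Y$ with $0<\mu(E)<\delta$ and $E\subseteq T^{-1}E$ mod $\mu$. To get this, start from the observation that by conservativity the induced map $T_Y$ on $Y$ is itself conservative and ergodic, and on the probability space $(Y,\mu_Y)$ apply the standard fact (a consequence of the Poincaré/Rokhlin recurrence machinery) that one can find a set $E\subseteq Y$ of small positive measure with $E=T_Y^{-1}E$ mod... no, ergodicity forbids nontrivial invariant sets. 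The honest construction: take a Rokhlin tower. Since $T_Y$ is aperiodic (conservative ergodic on a non-atomic space, or if atomic handle separately), by the Rokhlin lemma there is, for each $N$, a set $C\subseteq Y$ with $C,T_Y C,\dots,T_Y^{N-1}C$ disjoint and $\mu_Y(\bigcup_{j<N}T_Y^jC)>1-1/N$. Now I do NOT want an induced-time-$1$ return; I want a genuine $T$-return time equal to $1$. The direct approach: the set $\{x\in X:\varphi_{\{x\}}...\}$ is meaningless for non-atomic spaces.

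So here is the final, correct strategy, which I expect to be what the authors do, and it sidesteps towers entirely. The function $\varphi_Y$ restricted to $Y$ takes the value $1$ on $Y\cap T^{-1}Y$; if this set has positive measure, shrink it to get $E_k$ of small measure with $\varphi_{E_k}=1$ on $E_k$ (since $T x\in Y\cap T^{-1}Y\subseteq Y$, and then further intersect to land back in $E_k$ — use that $T_{Y\cap T^{-1}Y}$ is conservative to find $E\subseteq Y\cap T^{-1}Y$ with $\mu(E)$ small and $T E\cap E$ of positive measure, iterate/refine). If instead $\mu(Y\cap T^{-1}Y)=0$, replace $Y$ by $Y':=Y\cap T^{-j}Y$ for the smallest $j$ with $\mu(Y\cap T^{-j}Y)>0$ (such $j$ exists by recurrence, $Y\subseteq\bigcup_{n\geq1}T^{-n}Y$ mod $\mu$), apply the first argument to the induced system $T^j$ on a sweep-out, or more simply pass to the induced map $S:=T_{Y'}$ and note $\varphi^S_{E}$ versus $\varphi^T_E$: points of $E\subseteq Y'$ with $S$-return time $1$ have $T$-return time exactly $j$, a constant, so $\gamma(\mu(E_k))\,\varphi_{E_k}\to 0$ still. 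The main obstacle is thus purely the bookkeeping of finding, inside $Y$, small sets on which $\varphi_E$ is \emph{constant} (not just $1$); once it is a constant $c$, $\gamma(\mu(E_k))\,\varphi_{E_k}=c\,\gamma(\mu(E_k))\to 0$ and \eqref{Eq_jvkdjhfasgfjadfsd} is immediate. I would present this via: (i) a lemma producing $E\subseteq Y$ of arbitrarily small positive measure with $\varphi_E$ constant on $E$, proved by iterating the recurrence relation $\mu(E\cap T^{-n}E)>0$ for some $n$ and then passing to the return map of the resulting set; (ii) plugging into the definition. I expect step (i)'s measure-theoretic refinement — ensuring we can keep the measure small while forcing $T^n E\cap E$ positive — to be the only delicate point, handled by a standard exhaustion/conservativity argument on the induced transformation.

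\begin{proof}[Proof plan]
The idea is that it suffices to exhibit, for each $k$, a set $E_k\subseteq Y$ with $0<\mu(E_k)\to 0$ on which the return-time function $\varphi_{E_k}$ is (mod $\mu$) equal to a constant $c_k$, since then $\gamma(\mu(E_k))\,\varphi_{E_k}=c_k\,\gamma(\mu(E_k))$ on $E_k$ and, because $\gamma$ is a homeomorphism of $[0,\infty)$ with $\gamma(0)=0$, this tends to $0$; consequently $\mu_{E_k}(\gamma(\mu(E_k))\,\varphi_{E_k}\leq t)=1$ for all large $k$ and every $t>0$, which gives \eqref{Eq_jvkdjhfasgfjadfsd}. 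To produce such $E_k$: since $T$ is recurrent, $Y\subseteq\bigcup_{n\geq 1}T^{-n}Y$ mod $\mu$, so there is $n_0\geq 1$ with $\mu(Y\cap T^{-n_0}Y)>0$; set $Y_1:=Y\cap T^{-n_0}Y$. The induced map $T_{Y_1}$ is again a c.e.m.p.t. on $(Y_1,Y_1\cap\mathcal A,\mu\!\mid_{Y_1})$, hence conservative, and a routine exhaustion argument (splitting $Y_1$ into finitely many pieces of small measure and using conservativity of $T_{Y_1}$ on each piece of positive measure) yields, for any prescribed $\delta>0$, a set $E\subseteq Y_1$ with $0<\mu(E)<\delta$ and $\mu(E\cap T_{Y_1}^{-1}E)>0$. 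Replacing $E$ by $E\cap T_{Y_1}^{-1}E$ and iterating, one obtains $E$ of small positive measure on which the first $T_{Y_1}$-return time equals $1$ mod $\mu$, i.e.\ $T_{Y_1}x\in E$ for a.e.\ $x\in E$. For such $x$, the first $T$-return to $Y_1$ happens at time $n_0$ and lands in $E$, so $\varphi_E(x)=n_0$; thus $\varphi_E\equiv n_0=:c_k$ mod $\mu$ on $E$. Taking $\delta=1/k$ produces the desired $E_k$, and the proof is complete.

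The only point requiring care is the exhaustion step: one must keep $\mu(E)$ small while forcing $E\cap T_{Y_1}^{-1}E$ to be non-null. This is handled by writing $Y_1=\bigsqcup_{i=1}^{m}Y_1^{(i)}$ with $\mu(Y_1^{(i)})<\delta$; since $T_{Y_1}$ is conservative and ergodic, for each $i$ with $\mu(Y_1^{(i)})>0$ the set $Y_1^{(i)}\cap\bigcup_{n\geq1}T_{Y_1}^{-n}Y_1^{(i)}$ has full measure in $Y_1^{(i)}$, so some $Y_1^{(i)}\cap T_{Y_1}^{-n_i}Y_1^{(i)}$ is non-null; picking $i$ and replacing $Y_1$ by this last set (and $n_0$ by $n_0$ times the corresponding $T_{Y_1}$-return contribution, which is again a finite constant on the relevant subset) lets us repeat the argument with constant return time. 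Iterating finitely often drives the $T_{Y_1}$-return time down to $1$ on a further non-null subset of small measure. Throughout, the $T$-return time of the final set is a fixed finite value, which is all we need.
\end{proof}
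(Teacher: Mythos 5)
Your plan rests on producing, inside $Y$, sets $E$ of small positive measure on which $\varphi_{E}$ is almost everywhere equal to a (bounded) constant, and this is exactly where the argument collapses. By Kac's formula (recalled in Section 2 of the paper), $\int_{E}\varphi_{E}\,d\mu_{E}=\infty$ whenever $\mu(X)=\infty$ and $0<\mu(E)<\infty$, so no such set exists in the infinite-measure case at all; and if $\mu(X)<\infty$ the constant would have to be $\mu(X)/\mu(E_{k})$, so your opening reduction ("$c_{k}\gamma(\mu(E_{k}))\to0$ because $\gamma(0)=0$") silently assumes the $c_{k}$ stay bounded, which then fails (take $\gamma(s)=s$). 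The mechanism you propose for building such a set is also internally blocked: if $T_{Y_{1}}x\in E$ for a.e.\ $x\in E$ and $\mu(E)<\infty$, then $E\subseteq T_{Y_{1}}^{-1}E$ mod $\mu$, and since $T_{Y_{1}}$ preserves $\mu\mid_{Y_{1}}$ this forces $E=T_{Y_{1}}^{-1}E$ mod $\mu$; ergodicity of the induced map then gives $\mu(E)=0$ or $E=Y_{1}$ mod $\mu$. Concretely, your iteration $E\mapsto E\cap T_{Y_{1}}^{-1}E$ converges to $\bigcap_{i\geq0}T_{Y_{1}}^{-i}E$, which is null whenever $Y_{1}\setminus E$ is non-null; and the preliminary exhaustion step only yields $\mu(E\cap T_{Y_{1}}^{-n}E)>0$ for \emph{some} $n$, not $n=1$, so the patch in your last paragraph does not repair this.

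There is a second, independent error: even granting $T_{Y_{1}}x\in E$ a.e.\ on $E$, the conclusion $\varphi_{E}\equiv n_{0}$ is false. Membership in $Y_{1}=Y\cap T^{-n_{0}}Y$ only says $T^{n_{0}}x\in Y$; the first $T$-return of $x$ to $Y_{1}$ occurs at time $\varphi_{Y_{1}}(x)$, which has nothing to do with $n_{0}$, is not constant, and is unbounded (its integral over $Y_{1}$ is $\mu(X)$). So your construction would actually give $\varphi_{E}=\varphi_{Y_{1}}$ on $E$, which on a small set can be typically enormous; likewise the assertion that the elapsed $T$-time along $T_{Y_{1}}$-orbit segments "is again a finite constant on the relevant subset" is unjustified. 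The paper's proof differs precisely where your plan is rigid: it applies the Rokhlin lemma to the induced map $T_{Y}$ (w.l.o.g.\ $\mu(Y)=1$) to get disjoint levels $F,T_{Y}^{-1}F,\ldots,T_{Y}^{-(k-1)}F$, picks a thin $F^{\prime}\subseteq F$ with $\mu(F^{\prime})=1/k^{2}$, and sets $E_{k}=\bigcup_{i=0}^{k-1}T_{Y}^{-i}F^{\prime}$, so that $\mu(E_{k})=1/k$ and every point of $E_{k}$ off the single exceptional level $F^{\prime}$ returns to $E_{k}$ immediately (at its next visit to $Y$). The exceptional level has relative measure $1/k\to0$, which is exactly what the "$\longrightarrow1$" in the statement permits: demanding short returns \emph{almost everywhere} on $E_{k}$ is impossible, while allowing a small escape set is what makes a construction of this kind feasible.
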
%

%TCIMACRO{\TeXButton{VSs}{\vspace{0.3cm}}}%
%BeginExpansion
\vspace{0.3cm}%
%EndExpansion
%

%TCIMACRO{\TeXButton{NI}{\noindent}}%
%BeginExpansion
\noindent
%EndExpansion
\textbf{Identifying nontrivial tight scales in the presence of regular
variation.} There are systems which possess distinguished reference sets $Y$
of positive finite measure for which we can explicitly identify a good scaling
function. The main positive result of the present section is

\begin{theorem}
[\textbf{Nontrivial tight scale in a uniform set, }$\alpha>0$]%
\label{T_TightnessInsideDK}Let $T$ be a c.e.m.p.t. on $(X,\mathcal{A},\mu)$,
$\mu(X)=\infty$. Assume that $Y\in\mathcal{A}$ is a uniform set and that
$a_{T}\in\mathcal{R}_{\alpha}$ for some $\alpha\in(0,1]$. Let $b_{T} $ be the
inverse function of $a_{T}$, and
\begin{equation}
\gamma_{T}(s):=1/b_{T}(1/s)\text{,\quad}s>0\text{.}%
\end{equation}
Then $\gamma_{T}\in\mathcal{R}_{1/\alpha}(0^{+})$ and $\gamma_{T}$ is a
nontrivial tight scale for return times in $Y$.
\end{theorem}%

%TCIMACRO{\TeXButton{VSs}{\vspace{0.1cm}}}%
%BeginExpansion
\vspace{0.1cm}%
%EndExpansion

The situation inside a uniform set with regularly varying return sequence
therefore is not as wild as it is for arbitrary sets. The function $\gamma
_{T}$ is the scale which has been used in the concrete limit theorems of
\cite{PSZ}, and \cite{PSZ2}.

Note that the theorem does not cover the $\alpha=0$ case. Indeed, for
pointwise dual ergodic systems with $a_{T}\in\mathcal{R}_{0}$ it is more
natural to consider the nonlinear function $a_{T}(\varphi_{E})$ of
$\varphi_{E}$ rather than just rescaling $\varphi_{E}$ by a constant factor.
As shown in \cite{DE}, \cite{PS}, and \cite{PS2}, this often has a natural
limit distribution. The next result shows that $\mu(E)$ is once again a
natural normalization in this situation.

\begin{theorem}
[\textbf{Nontrivial tight distorted times in a uniform set,} $\alpha=0$%
]\label{P_NonlinTightScale}Let $T$ be a c.e.m.p.t. on $(X,\mathcal{A},\mu)$,
$\mu(X)=\infty$. Assume that $Y\in\mathcal{A}$ is a uniform set and that
$a_{T}\in\mathcal{R}_{0}$. Then,%
\begin{equation}
\left\{  \text{\textsf{law}}_{\mu_{E}}[\mu(E)\,a_{T}(\varphi_{E})]:E\in
Y\cap\mathcal{A}\text{, }\mu(E)>0\right\}  \text{ is tight.}%
\label{Eq_NonlinTightScale}%
\end{equation}
Moreover, there exists a sequence $(E_{k}^{\ast})$ in $Y\cap\mathcal{A}$ with
$0<\mu(E_{k}^{\ast})\rightarrow0$ for which
\begin{equation}
\mu_{E_{k}^{\ast}}(\mu(E_{k}^{\ast})\,a_{T}(\varphi_{E_{k}^{\ast}})\leq
t)\Longrightarrow1_{[1,\infty)}(t)\left(  1-\tfrac{1}{t}\right)  \text{ \quad
as }k\rightarrow\infty\text{.}\label{Eq_TheExtremeAlpha0Limit}%
\end{equation}

\end{theorem}

The proofs of Theorems \ref{T_TightnessInsideDK} and \ref{P_NonlinTightScale}
will depend on the investigation of the relations between return- and
hitting-time limits presented in Section \ref{Sec_RetVersHit}.

\section{Proofs for some results of Section 2}

We begin with the

\begin{proof}
[\textbf{Proof of Proposition \ref{P_NoTightnesssLargeSets}.}]\textbf{a)} We
show that there are $E_{k}\in\mathcal{A}$ with $\mu(E_{k})=\eta$ such that
$\varphi_{E_{k}}\geq k$ on $E_{k}$ for $k\geq1$.

Note first that an infinite measure space allowing a c.e.m.p. map $T$ is
necessarily nonatomic. Take some $Y\in\mathcal{A}$ with $0<\mu(Y)<\infty$, and
set $Y_{0}:=Y$ and $Y_{n}:=Y^{c}\cap\{\varphi_{Y}=n\}$, $n\geq1$. For any
$k\geq1$, the set $A_{k}:=\bigcup_{j\geq1}Y_{jk}$ satisfies $\varphi_{A_{k}%
}\mid_{A_{k}}\geq k$ and therefore $\varphi_{E}\mid_{E}\geq k$ holds for all
$E\in A_{k}\cap\mathcal{A}$. Since $\mu(A_{k})=\infty$ and $\mu$ is nonatomic,
$A_{k}$ has a subset $E_{k}$ with $\mu(E_{k})=\eta$.%
%TCIMACRO{\TeXButton{NL}{\newline}}%
%BeginExpansion
\newline
%EndExpansion
%

%TCIMACRO{\TeXButton{NI}{\noindent}}%
%BeginExpansion
\noindent
%EndExpansion
\textbf{b) } We first prove that for every $E\in Y\cap\mathcal{A}$ with
$\mu(E)>0$, and any $m,n\geq1$,
\begin{equation}
\mu_{E}\left(  \varphi_{E}>mn\right)  \leq\frac{\mu(Y)}{\mu(E)}\left(
\frac{1}{m}+m\,\mu_{Y}\left(  \varphi_{Y}>n\right)  \right)  \text{.}%
\label{Eq_TheESTIMATE}%
\end{equation}
Note first that decomposing an excursion from $E$ into consecutive excursions
from $Y$, we can represent $\varphi_{E}$ as
\begin{equation}
\varphi_{E}=\sum_{j=0}^{\varphi_{E}^{Y}-1}\varphi_{Y}\circ T_{Y}^{j}\text{
\quad on }Y\text{,}\label{Eq_sdhfgjsdhgfjshdfjsafjlsagfhfgjsdhfjdshf}%
\end{equation}
where $\varphi_{E}^{Y}(x):=\inf\{i\geq1:T_{Y}^{i}x\in E\}$ denotes the first
entrance time of $E$ under the induced map $T_{Y}$. This reveals that
\begin{equation}
E\cap\{\varphi_{E}>mn\}\subseteq\left(  E\cap\{\varphi_{E}^{Y}>m\}\right)
\,\cup\,\left(  Y\cap\bigcup_{j=0}^{m-1}T_{Y}^{-j}\{\varphi_{Y}>n\}\right)
\text{.}%
\end{equation}
Applying Kac' formula to $T_{Y}$ gives $\mu_{E}(\varphi_{E}^{Y}>m)\leq
\mu(Y)/(m\mu(E))$, and since $T_{Y}$ preserves $\mu_{Y}$, it is clear that
$\mu_{Y}(\bigcup_{j=0}^{m-1}T_{Y}^{-j}\{\varphi_{Y}>n\})\leq m\,\mu_{Y}\left(
\varphi_{Y}>n\right)  $. Combining these yields (\ref{Eq_TheESTIMATE}).

Now take any $\varepsilon>0$. First choose $m\geq1$ so large that $\frac
{\mu(Y)}{\eta m}<\frac{\varepsilon}{2}$, then pick $n\geq1$ so large that
$\frac{\mu(Y)}{\eta}m\,\mu_{Y}\left(  \varphi_{Y}>n\right)  <\frac
{\varepsilon}{2}$ as well. Now (\ref{Eq_TheESTIMATE}) shows that $\mu
_{E}\left(  \varphi_{E}>mn\right)  <\varepsilon$ whenever $E\in Y\cap
\mathcal{A}$ satisfies $\mu(E)\geq\eta$.%
%TCIMACRO{\TeXButton{NL}{\newline}}%
%BeginExpansion
\newline
%EndExpansion
%

%TCIMACRO{\TeXButton{NI}{\noindent}}%
%BeginExpansion
\noindent
%EndExpansion
\textbf{c) } Assume w.l.o.g. that $\mu(Y)=1$. We prove that there are sets
$E_{k}\in Y\cap\mathcal{A}$ with $0<\mu(E_{k})<1/k$ and
\begin{equation}
\mu_{E_{k}}\left(  \mu(E_{k})\,\varphi_{E_{k}}>k\right)  >(k-1)/k\text{ \quad
for }k\geq1\text{.}\label{Eq_golgolgol}%
\end{equation}
Fix any $k\geq1$. Since, according to Kac' formula, $\int_{Y}\varphi_{Y}%
\,d\mu=\infty$, we have $m^{-1}\sum_{j=0}^{m-1}\varphi_{Y}\circ T_{Y}%
^{j}\rightarrow\infty$ a.e. on $Y$ by (an obvious extension of) the ergodic
theorem. An Egorov-type argument shows that there is some $Z\in Y\cap
\mathcal{A}$ with $\mu(Y\setminus Z)<1/(2k)$ and an integer $M>k$ such that
\begin{equation}
\sum_{j=0}^{m-1}\varphi_{Y}\circ T_{Y}^{j}>2mk\text{ \quad on }Z\text{ for
}m\geq M\text{.}%
\end{equation}
Recalling the representation (\ref{Eq_sdhfgjsdhgfjshdfjsafjlsagfhfgjsdhfjdshf}%
), we conclude that for every $E\in Y\cap\mathcal{A}$,
\begin{equation}
\mu(E)\,\varphi_{E}>2M\mu(E)k\text{ \quad on }Z\cap\{\varphi_{E}^{Y}\geq
M\}\text{.}\label{Eq_sosososo}%
\end{equation}
Now appeal to the Rokhlin lemma to obtain some $F\in Y\cap\mathcal{A}$ for
which the sets $F_{i}:=T_{Y}^{-i}F$, $i\in\{0,\ldots,M-1\}$ are pairwise
disjoint, and $\mu(Y\setminus%
%TCIMACRO{\tbigcup \nolimits_{i=0}^{M-1}}%
%BeginExpansion
{\textstyle\bigcup\nolimits_{i=0}^{M-1}}
%EndExpansion
F_{i})<1/(M+1)$. In particular, $1/(M+1)<\mu(F_{i})\leq1/M$ for each $i$.
Observing that $\mu(%
%TCIMACRO{\tbigcup \nolimits_{i=0}^{M-1}}%
%BeginExpansion
{\textstyle\bigcup\nolimits_{i=0}^{M-1}}
%EndExpansion
F_{i}\setminus Z)<\mu(%
%TCIMACRO{\tbigcup \nolimits_{i=0}^{M-1}}%
%BeginExpansion
{\textstyle\bigcup\nolimits_{i=0}^{M-1}}
%EndExpansion
F_{i})/k$, we see that there is some $i_{0}\in\{0,\ldots,M-1\}$ for which
\begin{equation}
\mu_{F_{i_{0}}}(F_{i_{0}}\setminus Z)<1/k\text{.}\label{Eq_bvmnbcbmv}%
\end{equation}
Let $E_{k}:=F_{i_{0}}$, then $E_{k}\in Y\cap\mathcal{A}$ with $1/(M+1)<\mu
(E_{k})<1/k$. On the other hand, by the Rokhlin tower structure, we have
$\varphi_{E_{k}}^{Y}\geq M$ on $E_{k}$, and can thus employ the estimate
(\ref{Eq_sosososo}) to see that
\begin{equation}
\mu(E_{k})\,\varphi_{E_{k}}>k\text{ \quad on }Z\cap E_{k}\text{.}%
\end{equation}
In view of (\ref{Eq_bvmnbcbmv}), this implies our claim (\ref{Eq_golgolgol}).
\end{proof}%

%TCIMACRO{\TeXButton{VSs}{\vspace{0.1cm}}}%
%BeginExpansion
\vspace{0.1cm}%
%EndExpansion

Given Proposition \ref{P_NoTightnesssLargeSets}, it is easy to proceed to the

\begin{proof}
[\textbf{Proof of Proposition \ref{P_ExTightScalesAnyY}}]By part b) of
Proposition \ref{P_NoTightnesssLargeSets} there is some non-decreasing
sequence $(\vartheta_{m})_{m\geq1}$ in $(0,\infty)$ such that $\mu_{E}%
(\varphi_{E}>\vartheta_{m})<1/m$ for all $E\in Y\cap\mathcal{A}$ with
$\mu(E)>1/m$. Choose some scaling function $\gamma$ for which $\gamma
(1/m)=1/(m\vartheta_{m+1})$, $m\geq1$. Then, for $E\in Y\cap\mathcal{A}$ with
$\mu(E)\in(\frac{1}{m+1},\frac{1}{m}]$,
\begin{align*}
\mu_{E}(\gamma(\mu(E))\,\varphi_{E}>1/m)  & \leq\mu_{E}(\gamma(1/m)\,\varphi
_{E}>1/m)\\
& =\mu_{E}(\varphi_{E}>\vartheta_{m+1})<1/m\text{.}%
\end{align*}
This easily implies that $\gamma$ is tight for $Y$: Fix any $\varepsilon>0$
and choose $M:=\left\lfloor 1/\varepsilon\right\rfloor +1$. For any $E\in
Y\cap\mathcal{A}$ with $\mu(E)<1/M$ there is some $m\geq M$ with $\mu
(E)\in(\frac{1}{m+1},\frac{1}{m}]$, and hence $\mu_{E}(\gamma(\mu
(E))\,\varphi_{E}>1)<\varepsilon$ by the above. On the other hand, part b) of
Proposition \ref{P_NoTightnesssLargeSets} provides us with some $K$ such that
$\mu_{E}(\gamma(\mu(E))\,\varphi_{E}>K)<\varepsilon$ whenever $\mu(E)\geq1/M$.
\end{proof}%

%TCIMACRO{\TeXButton{VSs}{\vspace{0.1cm}}}%
%BeginExpansion
\vspace{0.1cm}%
%EndExpansion

\begin{proof}
[\textbf{Proof of Proposition \ref{Prop_VeryTrivialProp}.}]Assume w.l.o.g.
that $\mu(Y)=1$. We construct $E_{k}\in Y\cap\mathcal{A}$, $k\geq1$, s.t.
$\mu(E_{k})=1/k$ and $\mu_{E_{k}}(\varphi_{E_{k}}>1)\leq1/k$. Then $(E_{k})$
satisfies (\ref{Eq_jvkdjhfasgfjadfsd}).

Fix any $k\geq1$. By the Rokhlin lemma, there is some $F\in Y\cap\mathcal{A}$,
$\mu(F)>0$, such that the sets $F_{i}:=T_{Y}^{-i}F$, $i\in\{0,\ldots,k-1\}$
are pairwise disjoint. Since $\mu$ is nonatomic, there is some $F^{\prime}\in
F\cap\mathcal{A}$ such that $\mu(F^{\prime})=1/k^{2}$. Set $E_{k}:=%
%TCIMACRO{\tbigcup \nolimits_{i=0}^{k-1}}%
%BeginExpansion
{\textstyle\bigcup\nolimits_{i=0}^{k-1}}
%EndExpansion
T_{Y}^{-i}F^{\prime}$ (disjoint), then $\mu(E_{k})=1/k$, and $\varphi_{E_{k}%
}=1$ on $E_{k}\setminus F^{\prime}$.
\end{proof}%

%TCIMACRO{\TeXButton{VSs}{\vspace{0.1cm}}}%
%BeginExpansion
\vspace{0.1cm}%
%EndExpansion

We prepare the proof of Theorem \ref{T_NoUniversalScale1} by recording two
easy lemmas. Recall that $q_{n}(Y):=\mu_{Y}(\varphi_{Y}>n)$, $n\geq0$, are the
tail probabilities of $\varphi_{Y}$ under $\mu_{Y}$.

\begin{lemma}
[\textbf{Sufficient conditions for non-trivial and non-tight scales}%
]\label{L_EasySufficientForNon}Let $T$ be a c.e.m.p.t. on $(X,\mathcal{A}%
,\mu)$ with $\mu(X)=\infty$, $Y\in\mathcal{A}$ a set with $0<\mu(Y)<\infty$,
and let $\gamma:[0,\infty)\rightarrow\lbrack0,\infty)$ be a scaling function.
\begin{equation}
\text{If }\underset{n\rightarrow\infty}{\overline{\lim}}n\,\gamma(\mu
(Y)q_{n}(Y))>0\text{, then }\gamma\text{ is a non-trivial scale for
}Y\label{Eq_EasySuffNonTriv}%
\end{equation}
in the sense of (\ref{Eq_NonTrivialSequenceForGamma}), and
\begin{equation}
\text{if }\underset{n\rightarrow\infty}{\overline{\lim}}n\,\gamma(\mu
(Y)q_{n}(Y))=\infty\text{, then }\gamma\text{ is not a tight scale for
}Y\text{.}\label{Eq_EasySuffNonTight}%
\end{equation}

\end{lemma}

\begin{proof}
For $n\geq1$ set $E_{n}^{\ast}:=Y\cap\{\varphi_{Y}>n\}\subseteq Y$, then
$\mu(E_{n}^{\ast})=\mu(Y)q_{n}(Y)$, while $\varphi_{E_{n}^{\ast}}\geq
\varphi_{Y}>n$ on $E_{n}^{\ast}$. Hence,
\[
\gamma(\mu(E_{n}^{\ast}))\,\varphi_{E_{n}^{\ast}}>n\,\gamma(\mu(Y)q_{n}%
(Y))\text{ \quad on }E_{n}^{\ast}\text{,}%
\]
and the implications (\ref{Eq_EasySuffNonTriv}) and (\ref{Eq_EasySuffNonTight}%
) follow at once.
\end{proof}

\begin{remark}
[\textbf{Non-triviality of }$\gamma_{T}$]Condition of
(\ref{Eq_EasySuffNonTriv}) immediately shows that \emph{under the assumptions
of Theorem \ref{T_TightnessInsideDK}, if }$\alpha\in(0,1)$\emph{, then
}$\gamma_{T}$\emph{\ is non-trivial}.

Indeed, the asymptotic relation (\ref{Eq_AsyRenewalEqn}) together with the
Monotone Density Theorem (Theorem 1.7.2 of \cite{BGT}, this uses $\alpha<1$)
shows that in this case $q_{n}(Y)\sim\kappa_{\alpha}/a_{T}(n)$ as
$n\rightarrow\infty$ with $\kappa_{\alpha}:=(1-\alpha)/(\Gamma(2-\alpha
)\Gamma(1+\alpha))\in(0,\infty)$. Therefore, $n\,\gamma_{T}(\mu(Y)q_{n}%
(Y))\rightarrow(\mu(Y)\kappa_{\alpha})^{1/\alpha}\in(0,\infty) $ as
$n\rightarrow\infty$.
\end{remark}%

%TCIMACRO{\TeXButton{VSs}{\vspace{0.1cm}}}%
%BeginExpansion
\vspace{0.1cm}%
%EndExpansion

\begin{lemma}
[\textbf{Prescribing the measure of a set with given wandering rate}%
]\label{L_PrescribeWRandMu}Let $T$ be a c.e.m.p.t. on $(X,\mathcal{A},\mu)$
with $\mu(X)=\infty$, $Z\in\mathcal{A}$ a set with $0<\mu(Z)<\infty$.
\newline\newline\textbf{a)} For every $m\geq1$, we have $w_{N}(Z^{c}%
\cap\{\varphi_{Z}=m\})\sim w_{N}(Z)$ as $N\rightarrow\infty$.\newline%
\newline\textbf{b)} For every $\eta\in(0,\infty)$ there is some $Y\in
\mathcal{A}$ with $\mu(Y)=\eta$ and $w_{N}(Y)\sim w_{N}(Z)$.
\end{lemma}

\begin{proof}
\textbf{a)} Fix $m\geq1$ and note that $Z^{\prime}:=Z^{c}\cap\{\varphi
_{Z}=m\}\subseteq%
%TCIMACRO{\tbigcup \nolimits_{k=0}^{m}}%
%BeginExpansion
{\textstyle\bigcup\nolimits_{k=0}^{m}}
%EndExpansion
T^{-n}Z$, so that
\[
w_{N}(Z^{\prime})\leq w_{N}(%
%TCIMACRO{\tbigcup \nolimits_{k=0}^{m}}%
%BeginExpansion
{\textstyle\bigcup\nolimits_{k=0}^{m}}
%EndExpansion
T^{-n}Z)\sim w_{N}(Z)\text{.}%
\]
On the other hand, for $N>m$ we have
\[%
%TCIMACRO{\tbigcup \nolimits_{k=0}^{N-1}}%
%BeginExpansion
{\textstyle\bigcup\nolimits_{k=0}^{N-1}}
%EndExpansion
T^{-n}Z\subseteq\left(
%TCIMACRO{\tbigcup \nolimits_{k=0}^{m-1}}%
%BeginExpansion
{\textstyle\bigcup\nolimits_{k=0}^{m-1}}
%EndExpansion
T^{-n}Z\right)  \,\cup\,\left(
%TCIMACRO{\tbigcup \nolimits_{k=0}^{N-m}}%
%BeginExpansion
{\textstyle\bigcup\nolimits_{k=0}^{N-m}}
%EndExpansion
T^{-k}(Z^{\prime})\right)  \text{,}%
\]
and hence $w_{N}(Z)\leq\mu(%
%TCIMACRO{\tbigcup \nolimits_{k=0}^{m-1}}%
%BeginExpansion
{\textstyle\bigcup\nolimits_{k=0}^{m-1}}
%EndExpansion
T^{-n}Z)+w_{N-m}(Z^{\prime})\sim w_{N}(Z^{\prime})$, as required.\newline%
\newline\textbf{b)} Since $\mu(Z^{c}\cap\{\varphi_{Z}=m\})=\mu(Z\cap
\{\varphi_{Z}>m\})\rightarrow0$ as $m\rightarrow\infty$, statement a) shows
that there are arbitrarily small sets which asymptotically have the same
wandering rate as $Z$. We can therefore assume w.l.o.g. that $\mu(Z)\leq\eta$.

Now, as $%
%TCIMACRO{\tbigcup \nolimits_{k=0}^{L-1}}%
%BeginExpansion
{\textstyle\bigcup\nolimits_{k=0}^{L-1}}
%EndExpansion
T^{-k}Z\nearrow X$ (mod $\mu$), there is some integer $L\geq1$ for which
\[
\mu(%
%TCIMACRO{\tbigcup \nolimits_{k=0}^{L-1}}%
%BeginExpansion
{\textstyle\bigcup\nolimits_{k=0}^{L-1}}
%EndExpansion
T^{-k}Z)\leq\eta<\mu(%
%TCIMACRO{\tbigcup \nolimits_{k=0}^{L}}%
%BeginExpansion
{\textstyle\bigcup\nolimits_{k=0}^{L}}
%EndExpansion
T^{-k}Z)\text{.}%
\]
Since $\mu$ is necessarily non-atomic, there is some measurable $W\subseteq
T^{-L}Z\setminus%
%TCIMACRO{\tbigcup \nolimits_{k=0}^{L-1}}%
%BeginExpansion
{\textstyle\bigcup\nolimits_{k=0}^{L-1}}
%EndExpansion
T^{-k}Z$ for which $\mu(W)=\eta-\mu(%
%TCIMACRO{\tbigcup \nolimits_{k=0}^{L-1}}%
%BeginExpansion
{\textstyle\bigcup\nolimits_{k=0}^{L-1}}
%EndExpansion
T^{-k}Z)$. Set $Y:=%
%TCIMACRO{\tbigcup \nolimits_{k=0}^{L-1}}%
%BeginExpansion
{\textstyle\bigcup\nolimits_{k=0}^{L-1}}
%EndExpansion
T^{-k}Z\cup W$, then $\mu(Y)=\eta$ and since $%
%TCIMACRO{\tbigcup \nolimits_{k=0}^{L-1}}%
%BeginExpansion
{\textstyle\bigcup\nolimits_{k=0}^{L-1}}
%EndExpansion
T^{-k}Z\subseteq Y\subseteq%
%TCIMACRO{\tbigcup \nolimits_{k=0}^{L}}%
%BeginExpansion
{\textstyle\bigcup\nolimits_{k=0}^{L}}
%EndExpansion
T^{-k}Z$ we have $w_{N}(Y)\sim w_{N}(Z)$.
\end{proof}%

%TCIMACRO{\TeXButton{VSs}{\vspace{0.1cm}}}%
%BeginExpansion
\vspace{0.1cm}%
%EndExpansion

\begin{proof}
[\textbf{Proof of Theorem \ref{T_NoUniversalScale1}.}]In view of Lemma
\ref{L_EasySufficientForNon} it suffices to show that there is some
$Y\in\mathcal{A}$ with $\mu(Y)=1$ such that%
\begin{equation}
\overline{\lim}_{n\rightarrow\infty}\,n\,\gamma(q_{n}(Y))=\infty
\text{.}\label{Eq_ghjdhfgfdjgbcuagbfc}%
\end{equation}
Observe first that we can choose a sequence $(M_{n})_{n\geq1}$ in $(0,\infty)$
such that $M_{n}\nearrow\infty$ while $M_{n}/n\searrow0$ so slowly that
$a_{N}^{\circ}:=\sum_{n=1}^{N}\gamma^{-1}(M_{n}/n)\nearrow\infty$ as
$N\rightarrow\infty$. Since $\gamma^{-1}(M_{n}/n)\searrow0$, we also have
$a_{N}^{\circ}/N\searrow0$ in this case.

By Proposition 3.8.2 of \cite{A0}, there is some $Z\in\mathcal{A}$,
$0<\mu(Z)<\infty$, such that $w_{N}(Z)\geq2a_{N}$ for $N\geq1$. According to
Lemma \ref{L_PrescribeWRandMu}, there is some $Y\in\mathcal{A}$, $\mu(Y)=1$,
with $w_{N}(Y)\sim w_{N}(Z)$. In particular, there is some $N^{\circ}$ such
that $w_{N}(Y)>3a_{N}/2$ for $N\geq N^{\circ}$, that is,
\[%
%TCIMACRO{\tsum _{n=0}^{N}}%
%BeginExpansion
{\textstyle\sum_{n=0}^{N}}
%EndExpansion
q_{n}(Y)>\frac{3}{2}%
%TCIMACRO{\tsum _{n=1}^{N}}%
%BeginExpansion
{\textstyle\sum_{n=1}^{N}}
%EndExpansion
\gamma^{-1}(M_{n}/n)\text{ \quad for }N\geq N^{\circ}\text{.}%
\]
But since $a_{N}^{\circ}\nearrow\infty$ this implies that there are indices
$n_{k}\nearrow\infty$ for which
\[
q_{n_{k}}(Y)>\gamma^{-1}(M_{n_{k}}/n_{k})\text{ \quad whenever }k\geq1\text{,}%
\]
and hence $n_{k}\,\gamma(q_{n_{k}}(Y))>M_{n_{k}}$ for $k\geq1$. As $M_{n_{k}%
}\nearrow\infty$, this proves (\ref{Eq_ghjdhfgfdjgbcuagbfc}).
\end{proof}%

%TCIMACRO{\TeXButton{VSs}{\vspace{0.1cm}}}%
%BeginExpansion
\vspace{0.1cm}%
%EndExpansion

\section{Return-time limits versus hitting-time limits\label{Sec_RetVersHit}}

This section contains the central results of the present paper. Our proofs of
several other results (including Theorem \ref{T_TightnessInsideDK} above) rely
on them.

We clarify the relation between asymptotic return-time distributions and
asymptotic hitting-time distributions in the situation of Theorems
\ref{T_TightnessInsideDK} and \ref{P_NonlinTightScale}, where suitable scaling
functions can be found.%
%TCIMACRO{\TeXButton{NL}{\newline}}%
%BeginExpansion
\newline
%EndExpansion
%

%TCIMACRO{\TeXButton{NI}{\noindent}}%
%BeginExpansion
\noindent
%EndExpansion
\textbf{Strong distributional convergence of hitting times.} In contrast to
asymptotic return distributions, asymptotic hitting distributions are robust
under any change of (absolutely continuous) initial measure. This property was
mentioned before. Since is plays a crucial role in the following, we provide a
formal statement (taken from \cite{Z7}, Corollary 5).

\begin{lemma}
[\textbf{Strong distributional convergence of hitting times}]%
\label{L_StrongDCge}Let $T$ be a c.e.m.p.t. on $(X,\mathcal{A},\mu)$, $\gamma$
a scaling function, and $(E_{k})$ a sequence in $\mathcal{A}$ with
$0<\mu(E_{k})\rightarrow0$. If there are some $F\in\mathcal{F}$ and some
probability $\nu^{\ast}\ll\mu$ such that
\begin{equation}
\nu^{\ast}(\gamma(\mu(E_{k}))\,\varphi_{E_{k}}\leq t)\Longrightarrow
F(t)\text{ \quad as }k\rightarrow\infty\text{,}%
\end{equation}
then
\begin{equation}
\nu(\gamma(\mu(E_{k}))\,\varphi_{E_{k}}\leq t)\Longrightarrow F(t)\text{ \quad
as }k\rightarrow\infty\text{ \quad for all }\nu\ll\mu\text{.}%
\end{equation}

\end{lemma}

The reason behind this principle is \emph{asymptotic} $T$\emph{-invariance in
measure} of the variables $\gamma(\mu(E_{k}))\,\varphi_{E_{k}}$, which means
that
\begin{equation}
\gamma(\mu(E_{k}))\,\varphi_{E_{k}}-\gamma(\mu(E_{k}))\,\varphi_{E_{k}}\circ
T\overset{\nu}{\longrightarrow}0\text{ \quad as }k\rightarrow\infty
\end{equation}
for all probabilities $\nu\ll\mu$. The latter is immediate from the fact that
\begin{equation}
\varphi_{E}=\varphi_{E}\circ T+1\text{ \quad on }\{\varphi_{E}>1\}=T^{-1}%
E\text{.}\label{Eq_nbmcxnbhdfvbmchvvrhf}%
\end{equation}
Arguments related to those responsible for this lemma will be used in Section
\ref{Sec_LimitThmAlphaPositive}.%
%TCIMACRO{\TeXButton{NL}{\newline}}%
%BeginExpansion
\newline
%EndExpansion
%

%TCIMACRO{\TeXButton{NI}{\noindent}}%
%BeginExpansion
\noindent
%EndExpansion
\textbf{Hitting versus returning in finite measure systems.} Recall that in
the probability-preserving setup, a simple general principle relates limit
laws for normalized hitting-times and for normalized return-times to each
other (see \cite{HaydnLacVai05}):%
%TCIMACRO{\TeXButton{NL}{\newline}}%
%BeginExpansion
\newline
%EndExpansion
%

%TCIMACRO{\TeXButton{NI}{\noindent}}%
%BeginExpansion
\noindent
%EndExpansion
\textbf{Theorem HLV (Return and hitting-time limits for finite measure).
}\textit{Let }$T$\textit{\ be an ergodic m.p.t. on }$(X,A,\mu)$\textit{, }%
$\mu(X)=1$\textit{. Suppose that }$E_{k}$\textit{, }$k\geq1$\textit{, are sets
of positive measure with }$\mu(E_{k})\rightarrow0$\textit{.}

\textit{Then the normalized return-time distributions of the }$E_{k}%
$\textit{\ converge in that\ }%
\begin{equation}
\mu_{E_{k}}(\mu(E_{k})\,\varphi_{E_{k}}\leq t)\Longrightarrow\widetilde
{F}(t)\text{ \quad as }k\rightarrow\infty
\end{equation}
\textit{for some }$\widetilde{F}\in\mathcal{F}$\textit{, if and only if the
normalized hitting-time distributions converge, }%
\begin{equation}
\mu(\mu(E_{k})\,\varphi_{E_{k}}\leq t)\Longrightarrow F(t)\text{ \quad as
}k\rightarrow\infty
\end{equation}
\textit{for some }$F\in\mathcal{F}$\textit{. In this case the limit laws
satisfy }%
\begin{equation}
F(t)=\int_{0}^{t}[1-\widetilde{F}(s)]\,ds\text{ \quad for }t\geq
0\text{.}\label{Eq_HLV_relation1}%
\end{equation}

If (\ref{Eq_HLV_relation1}) is satisfied, $F$ is sometimes called the
\emph{integrated tail distribution} of $\widetilde{F}$.

\begin{remark}
\label{R_DegenerateDegenerate}In view of Kac' formula, it is a priori clear
that any limit law $\widetilde{F}$ of the normalized return times has
expectation not exceeding\thinspace$1$. On the other hand, it can (and often
does) happen that the limit law is concentrated at $t=0$, so that
$\widetilde{F}=1_{[0,\infty)}$. According to (\ref{Eq_HLV_relation1}), this
happens iff the scaled hitting-times $\mu(E_{k})\,\varphi_{E_{k}}$ diverge to
infinity under $\mu$, that is, iff $F=0$.
\end{remark}%

%TCIMACRO{\TeXButton{VSs}{\vspace{0.2cm}}}%
%BeginExpansion
\vspace{0.2cm}%
%EndExpansion
%

%TCIMACRO{\TeXButton{NI}{\noindent}}%
%BeginExpansion
\noindent
%EndExpansion
\textbf{Hitting versus returning in infinite measure systems - difficulties.}
We address the obvious question of how the two different types of limit
theorems are related in null-recurrent situations. As a first complication,
meaningful limits must involve some nontrivial scaling function $\gamma$, and
we have seen above that there is no natural choice which works for all small
sets. But even if we can find $\gamma$ such that a specific sequence $(E_{k})$
has a nondegenerate return-time limit on this scale, this does not imply that
it also has a hitting-time limit different from $\infty$ if we use $\gamma$
(in contrast to the finite measure case, compare Remark
\ref{R_DegenerateDegenerate}.)

\begin{theorem}
[\textbf{All return time limits are compatible with exploding hitting times}%
]\label{Thm_SameRetDistButDivergentHitDist}Let $T$ be a c.e.m.p.t. on
$(X,\mathcal{A},\mu)$ with $\mu(X)=\infty$, and take any $Y\in\mathcal{A}$
with $0<\mu(Y)<\infty$. Let $\gamma$ be any scaling function.

Suppose that $E_{k}\in\mathcal{A}$, $k\geq1$, are sets of positive measure
with $\mu(E_{k})\rightarrow0$. If, for some $\widetilde{F}\in\mathcal{F}$,
\begin{equation}
\mu_{E_{k}}(\gamma(\mu(E_{k}))\,\varphi_{E_{k}}\leq t)\Longrightarrow
\widetilde{F}(t)\text{ \quad as }k\rightarrow\infty\text{,}%
\label{Eq_dbvazuegwauzeuzerrtrtrttrtrtrttrttrt}%
\end{equation}
then there exist $E_{k}^{\prime}\in\mathcal{A}$ with $\mu(E_{k}^{\prime}%
)=\mu(E_{k})$ and
\begin{equation}
\mu_{E_{k}^{\prime}}(\gamma(\mu(E_{k}^{\prime}))\,\varphi_{E_{k}^{\prime}}\leq
t)\Longrightarrow\widetilde{F}(t)\text{ \quad as }k\rightarrow\infty\text{,}%
\end{equation}
while
\begin{equation}
\mu_{Y}(\gamma(\mu(E_{k}^{\prime}))\,\varphi_{E_{k}^{\prime}}\leq
t)\Longrightarrow0\text{ \quad as }k\rightarrow\infty\text{.}%
\end{equation}

\end{theorem}

(Recall that by Helly's selection theorem, every asymptotivally rare sequence
$(E_{k})$ has a subsequence along which return distibutions converge as in
(\ref{Eq_dbvazuegwauzeuzerrtrtrttrtrtrttrttrt}).)

The (simple) principle behind this theorem is that we can move copies
$E_{k}^{\prime}$ of the $E_{k}$, which have the same return distributions, to
places which are far away from any given $Y$. Therefore we can only hope for a
result parallel to Theorem HLV if we focus on sequences $(E_{k})$ inside some
reference set.%

%TCIMACRO{\TeXButton{VSs}{\vspace{0.2cm}}}%
%BeginExpansion
\vspace{0.2cm}%
%EndExpansion
%

%TCIMACRO{\TeXButton{NI}{\noindent}}%
%BeginExpansion
\noindent
%EndExpansion
\textbf{Hitting versus returning inside a uniform set.} We prove an abstract
result in the spirit of Theorem HLV which applies to infinite measure
preserving maps possessing a uniform set with regularly varying return
sequence. We use the normalization discussed in the preceding section. The
following result confirms once again that the latter is a sensible choice.

\begin{theorem}
[\textbf{Return- versus hitting-time limits in uniform sets}]%
\label{T_Ret_versus_Hit}Let $T$ be a c.e.m.p.t. on $(X,\mathcal{A},\mu)$,
$\mu(X)=\infty$. Assume that $Y\in\mathcal{A}$ is a uniform set and that
$a_{T}\in\mathcal{R}_{\alpha}$ for some $\alpha\in(0,1]$. Define $\gamma_{T}$
by $\gamma_{T}(s):=1/b_{T}(1/s)$, $s>0$.

Suppose that $E_{k}\subseteq Y$, $k\geq1$, are sets of positive measure with
$\mu(E_{k})\rightarrow0$. Then the normalized return-time distributions of the
$E_{k}$ converge in that
\begin{equation}
\mu_{E_{k}}(\gamma_{T}(\mu(E_{k}))\,\varphi_{E_{k}}\leq t)\Longrightarrow
\widetilde{F}(t)\text{ \quad as }k\rightarrow\infty\label{Eq_AbstrDistrCgeII}%
\end{equation}
for some $\widetilde{F}\in\mathcal{F}$, if and only if the normalized
hitting-time distributions converge,
\begin{equation}
\mu_{Y}(\gamma_{T}(\mu(E_{k}))\,\varphi_{E_{k}}\leq t)\Longrightarrow
F(t)\text{ \quad as }k\rightarrow\infty\label{Eq_AbstrDistrCgeIII}%
\end{equation}
for some $F\in\mathcal{F}$. In this case the limit laws satisfy
\begin{equation}
F(t)=\int_{0}^{t}[1-\widetilde{F}(s)]\,\alpha\left(  t-s\right)  ^{\alpha
-1}ds\text{ \quad for }t\geq0\text{.}\label{Eq_MagicFormula1}%
\end{equation}

\end{theorem}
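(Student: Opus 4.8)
The plan is to reduce the infinite-measure statement to the finite-measure principle (Theorem HLV) applied to the induced map $T_Y$, and then to track how the extra waiting time accumulated during the long excursions outside $Y$ contributes the regularly-varying kernel $\alpha(t-s)^{\alpha-1}$. Write $\varphi_E^Y$ for the first entrance time of $E$ under the induced map $T_Y$ on $(Y,\mu_Y)$, so that by \eqref{Eq_sdhfgjsdhgfjshdfjsafjlsagfhfgjsdhfjdshf} we have $\varphi_E = \sum_{j=0}^{\varphi_E^Y-1}\varphi_Y\circ T_Y^j$ on $Y$. The natural normalization for $\varphi_E^Y$ in the probability space $(Y,\mu_Y)$ is $\mu_Y(E)\cdot\varphi_E^Y$, and since $\mu_Y(E_k)=\mu(E_k)/\mu(Y)\to0$, Theorem HLV (applied to $T_Y$, which preserves the probability $\mu_Y$) tells us that $\mathsf{law}_{\mu_{E_k}}[\mu_Y(E_k)\varphi_{E_k}^Y]$ converges iff $\mathsf{law}_{\mu_Y}[\mu_Y(E_k)\varphi_{E_k}^Y]$ converges, with the two limits related by the integrated-tail formula. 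So the first step is to show that convergence of the normalized return (resp.\ hitting) distributions of $\varphi_{E_k}$ with scale $\gamma_T$ is \emph{equivalent} to convergence of the normalized return (resp.\ hitting) distributions of $\varphi_{E_k}^Y$ with scale $\mu_Y(\cdot)$, and to identify the functional relation between the limits.

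The bridge between $\varphi_E$ and $\varphi_E^Y$ is a law-of-large-numbers/stable-limit statement: as $m\to\infty$, the partial sums $S_m := \sum_{j=0}^{m-1}\varphi_Y\circ T_Y^j$ of the return-time function over the $T_Y$-orbit behave like $b_T(m)$ in the appropriate distributional sense, because $Y$ is a DK-set with $a_T\in\mathcal{R}_\alpha$, hence $w_N(Y)\in\mathcal{R}_{1-\alpha}$ and the tail $q_n(Y)=\mu_Y(\varphi_Y>n)$ is (up to slowly varying factors) of order $n^{-\alpha}w_N(Y)$-type; this is precisely the Darling--Kac / Dynkin--Lamperti regime where $S_m/b_T(m)$ converges in distribution to $\mathcal{G}_\alpha^{-\,}$ type variables. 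Concretely, I expect to use that for the DK-set $Y$, the duality between $(a_n)$ and the return-time tails gives $S_m \approx b_T(m)$ in the sense that $\gamma_T(\mu(E))\,\varphi_E \approx (\text{something})\cdot$ a stable subordinator evaluated at $\mu_Y(E)\,\varphi_E^Y$. The key computation is: if $\mu_Y(E_k)\varphi_{E_k}^Y \Rightarrow R$ (a random variable, or its distribution function), then since $\varphi_{E_k}=S_{\varphi_{E_k}^Y}$ and $S_m$ is asymptotically $b_T(m)$-scaled, we get $\gamma_T(\mu(E_k))\varphi_{E_k} = \varphi_{E_k}/b_T(1/\mu(E_k)) \Rightarrow$ a stable-subordinator time-change of $R$. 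The distribution-function avatar of ``take a one-sided $\alpha$-stable subordinator $\mathcal{G}$ and evaluate $\mathcal{G}_R$'' is exactly convolution against the kernel coming from the $\alpha$-stable Lévy measure, and integrating the defining relation produces the factor $\alpha(t-s)^{\alpha-1}$ in \eqref{Eq_MagicFormula1}. I would carry this out by computing Laplace transforms: with $\widehat{F}(\lambda)$, $\widehat{\widetilde F}(\lambda)$ the Laplace--Stieltjes transforms (or rather the transforms of $1-F$, $1-\widetilde F$), show that passing from the $T_Y$-level limits to the $T$-level limits amounts to the substitution $\lambda\mapsto\lambda^\alpha$ (from the stable subordinator) in both, while Theorem HLV contributes the factor relating the return transform to the hitting transform; combining, the integrated-tail relation at the $T_Y$-level becomes \eqref{Eq_MagicFormula1} at the $T$-level, since multiplication by $\lambda^{\alpha-1}$ on the transform side is convolution with $t\mapsto \alpha t^{\alpha-1}/\Gamma(\alpha)\cdot\Gamma(\alpha)$... up to the normalizing constants, which I would fix by checking the case $\alpha=1$ (where the kernel is $1$ and the statement reduces to Theorem HLV verbatim, consistent with $\mathcal{G}_1=1$) and by matching the known limit $\mathcal{H}_\alpha=\mathcal{E}^{1/\alpha}\mathcal{G}_\alpha$ from \cite{PSZ2}.

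To make the distributional approximation $\varphi_{E_k}=S_{\varphi_{E_k}^Y}\approx b_T(\varphi_{E_k}^Y/\mu_Y(E_k)\cdot\mu_Y(E_k))$ rigorous and uniform over the sequence $(E_k)$, I would establish a two-sided estimate: for any $\varepsilon>0$ there are constants so that off an event of $\mu_Y$-measure (and $\mu_{E_k}$-measure) at most $\varepsilon$, one has $S_m/b_T(m)$ between explicit bounds for all $m$ large, combined with the observation that $\varphi_{E_k}^Y\to\infty$ in probability (as $\mu_Y(E_k)\to0$, by Kac for $T_Y$). This is the step I expect to be the main obstacle: one needs that the ``time change'' $m\mapsto\varphi_{E_k}^Y$ and the ``additive functional'' $m\mapsto S_m$ interact in a way that survives the double limit, i.e.\ the joint convergence of $(\mu_Y(E_k)\varphi_{E_k}^Y,\ S_{\varphi_{E_k}^Y}/b_T(1/\mu(E_k)))$ to $(R,\mathcal{G}_R)$ with $\mathcal{G}$ an independent $\alpha$-stable subordinator. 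The independence/asymptotic-independence of the increments of $S$ from the entrance time $\varphi_{E_k}^Y$ — heuristically plausible since $\varphi_{E_k}^Y$ is determined by a ``thin'' target while the bulk of $S_{\varphi_{E_k}^Y}$ is built from many typical excursions — is what makes the subordinator appear with the nice independent structure; I would justify it via a conditioning argument on $\varphi_{E_k}^Y$ together with the DK-set uniformity \eqref{Eq_UunifSetRatioVersion}, which controls $\sum_{k=0}^{n-1}\widehat{T}^k(\mu(Y)^{-1}1_Y)$ uniformly on $Y$ and thereby pins down the law of the excursion sums. Once this joint convergence is in hand, the forward and reverse implications of the theorem, and formula \eqref{Eq_MagicFormula1}, follow from Theorem HLV at the $T_Y$-level plus the deterministic transform bookkeeping described above; and Theorem \ref{T_TightnessInsideDK} drops out as a byproduct, since tightness of $\mathsf{law}_{\mu_{E_k}}[\mu_Y(E_k)\varphi_{E_k}^Y]$ is automatic from \eqref{Eq_TrivialTightnessFini} applied to $T_Y$, and the subordinator time-change preserves tightness.
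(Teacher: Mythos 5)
Your route (induce on $Y$, apply Theorem HLV to $T_Y$, and recover $\varphi_{E_k}=S_{\varphi_{E_k}^Y}$ with $S_m=\sum_{j=0}^{m-1}\varphi_Y\circ T_Y^j$ behaving like a $b_T$-scaled stable subordinator) is genuinely different from the paper's, but it has a real gap exactly at the step you yourself flag: you need \emph{joint} convergence of $\bigl(\mu_Y(E_k)\varphi_{E_k}^Y,\ S_{\varphi_{E_k}^Y}/b_T(\varphi_{E_k}^Y)\bigr)$ to a product law with an independent $\mathcal{G}_\alpha$ factor, and you need it under \emph{both} $\mu_Y$ and the shrinking measures $\mu_{E_k}$, for completely arbitrary subsets $E_k\subseteq Y$. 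The DK-property only controls $\sum_{k<n}\widehat{T}^k 1_Y$ on $Y$; it yields distributional convergence of $S_m/b_T(m)$ under a \emph{fixed} absolutely continuous law, not under $\mu_{E_k}$, and gives no asymptotic independence from the hitting index. For adversarial choices such as $E_k=Y\cap\{\varphi_Y>N_k\}$ with $N_k$ of the order $b_T(1/\mu(E_k))$, the very first excursion under $\mu_{E_k}$ is macroscopic and strongly correlated with the target, so neither the marginal stable limit under $\mu_{E_k}$ nor the independence is available; supplying such regeneration structure in general is precisely what the extra hypotheses (\ref{Eq_AssmShortReturns}) and (\ref{Eq_TheCondition}) of Theorem \ref{T_Thm_From_PSZ2} are for, whereas Theorem \ref{T_Ret_versus_Hit} is supposed to hold with no such hypotheses. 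The same unproven step is needed to transfer the \emph{assumed} convergence (\ref{Eq_AbstrDistrCgeII}) or (\ref{Eq_AbstrDistrCgeIII}) down to the induced level before HLV can even be invoked (both directions of the iff), and it is also what your claimed byproduct proof of Theorem \ref{T_TightnessInsideDK} rests on: tightness of $\mu_Y(E_k)\varphi_{E_k}^Y$ does not bound $\varphi_{E_k}$ without uniform control of the excursion sums under $\mu_{E_k}$.

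Two further points. First, the constants are not mere bookkeeping: with the Darling--Kac normalization (mean-one Mittag--Leffler limit) the inversion gives $S_m/b_T(m)\Longrightarrow\Gamma(1+\alpha)^{-1/\alpha}\mathcal{G}_\alpha$, not $\mathcal{G}_\alpha$, and this factor is exactly what the Laplace-transform computation needs in order to close up to the kernel $\alpha(t-s)^{\alpha-1}$ in (\ref{Eq_MagicFormula1}); fixing constants ``by matching the known limit'' from \cite{PSZ2} is circular when the formula is what you are proving. Second, for contrast: the paper never induces and never needs any independence statement. It proves the distorted version (Theorem \ref{T_Ret_versus_Hit2}) directly from the last-visit decomposition of Lemma \ref{L_ED_basic_decomposition}, block-decomposing $\sum_{l\leq n_k}\int_{E_k\cap\{\varphi_{E_k}>n_k-l\}}\widehat{T}^l 1_Y\,d\mu$ and using the DK-uniformity plus regular variation of $a_T$, with a Helly subsequence argument for the converse implication; Theorem \ref{T_Ret_versus_Hit} then follows by the regular-variation transfer of Lemma \ref{L_RegVarDistortion}. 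To salvage your approach you would have to prove a joint limit theorem for the pair (hitting index, excursion sums) uniformly over all small subsets of $Y$, which appears at least as hard as the theorem itself.
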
%

%TCIMACRO{\TeXButton{VSs}{\vspace{0.2cm}}}%
%BeginExpansion
\vspace{0.2cm}%
%EndExpansion

\begin{remark}
[\textbf{Some comments and consequences}]\label{Rem_Comments1}We record the
following:\newline\textbf{a)} In the $\alpha=1$ case of a \textquotedblleft
barely infinite\textquotedblright\ measure, (\ref{Eq_MagicFormula1}) reduces
to (\ref{Eq_HLV_relation1}).\newline\textbf{b)} In (\ref{Eq_MagicFormula1}),
the function $\widetilde{F}$ clearly determines $F$, and vice versa.\newline%
\textbf{c)} Relation (\ref{Eq_MagicFormula1}) shows that $F$ is necessarily
continuous on $[0,\infty)$ with $F(0)=0$. Moreover, since $\int_{0}^{t}%
\alpha\left(  t-s\right)  ^{\alpha-1}ds=t^{\alpha}\rightarrow\infty$ as
$t\rightarrow\infty$, it is immediate that $\widetilde{F}(s)\rightarrow1$ as
$s\rightarrow\infty$, so that $\widetilde{F}$ is a \emph{probability}
distribution function on $[0,\infty)$. \newline\textbf{d)} By the previous
remark and Theorem \ref{T_NoUniversalScale1} we see that under the assumptions
of Theorem \ref{T_Ret_versus_Hit} there are always sets $Y^{\prime}%
\in\mathcal{A}$, $0<\mu(Y^{\prime})<\infty$, inside which the conclusion of
Theorem \ref{T_Ret_versus_Hit} fails. \newline\textbf{e)} If $(E_{k})$ is a
sequence such that (\ref{Eq_AbstrDistrCgeII}) takes place with $\widetilde
{F}=1_{[0,\infty)}$, that is \textsf{law}$_{\mu_{E_{k}}}[\gamma_{T}(\mu
(E_{k}))\,\varphi_{E_{k}}]\Longrightarrow\delta_{0}$ (by Proposition
\ref{Prop_VeryTrivialProp} the set $Y$ always contains such a sequence), then
(\ref{Eq_AbstrDistrCgeIII}) holds with $F=0$, so that \textsf{law}$_{\mu_{Y}%
}[\gamma_{T}(\mu(E_{k}))\,\varphi_{E_{k}}]\Longrightarrow\delta_{\infty}%
$.\newline\textbf{f)} In (\ref{Eq_AbstrDistrCgeIII}) the measure $\mu_{Y}$ can
be replaced by any fixed probability measure $\nu\ll\mu$, see Lemma
\ref{L_StrongDCge}.
\end{remark}%

%TCIMACRO{\TeXButton{VSs}{\vspace{0.2cm}}}%
%BeginExpansion
\vspace{0.2cm}%
%EndExpansion
%

%TCIMACRO{\TeXButton{NI}{\noindent}}%
%BeginExpansion
\noindent
%EndExpansion
\textbf{Including the barely recurrent case: distorted return- and hitting
times.} Theorem \ref{T_Ret_versus_Hit} is a consequence of the following
result where return- and hitting times are distorted by the nonlinear function
$a_{T}$. Theorem \ref{T_Ret_versus_Hit2} is more general in that it also gives
nontrivial information about the \emph{barely recurrent}\ $\alpha=0$ case
which Theorem \ref{T_Ret_versus_Hit} does not cover. This case is of interest,
because of very natural examples (recurrent random walks on $\mathbb{Z}^{2}$
and recurrent $\mathbb{Z}^{2}$-extensions including the Lorentz process, see
\cite{PS} and \cite{PS2}, and slowly recurrent random walks on $\mathbb{R}$,
\cite{PSZ}) in which (for natural sequences $(E_{k})$) the $\mu(E_{k}%
)\,a_{T}(\varphi_{E_{k}})$ have been shown to converge to the limit law with
distribution function $H^{\ast}(t):=t/(1+t)$, $t\geq0$.

\begin{theorem}
[\textbf{Distorted return- and hitting-time limits in uniform sets}%
]\label{T_Ret_versus_Hit2}Let $T$ be a c.e.m.p.t. on $(X,\mathcal{A},\mu)$,
$\mu(X)=\infty$. Assume that $Y\in\mathcal{A}$ is a uniform set and that
$a_{T}\in\mathcal{R}_{\alpha}$ for some $\alpha\in\lbrack0,1]$.

Suppose that $E_{k}\subseteq Y$, $k\geq1$, are sets of positive measure with
$\mu(E_{k})\rightarrow0$. Then the distorted return-time distributions of the
$E_{k}$ converge,
\begin{equation}
\mu_{E_{k}}(\mu(E_{k})\,a_{T}(\varphi_{E_{k}})\leq t)\Longrightarrow
\widetilde{G}(t)\text{ \quad as }k\rightarrow\infty\label{Eq_AbstrDistrCgeI}%
\end{equation}
for some $\widetilde{G}\in\mathcal{F}$, if and only if the distorted
hitting-time distributions converge,
\begin{equation}
\mu_{Y}(\mu(E_{k})\,a_{T}(\varphi_{E_{k}})\leq t)\Longrightarrow G(t)\text{
\quad as }k\rightarrow\infty\label{Eq_AbstrDistrCgeIIIa}%
\end{equation}
for some $G\in\mathcal{F}$. In this case the limit laws satisfy, for $t\geq
0$,
\begin{equation}
G(t)=\left\{
\begin{array}
[c]{ll}%
t%
%TCIMACRO{\dint _{0}^{1}}%
%BeginExpansion
{\displaystyle\int_{0}^{1}}
%EndExpansion
[1-\widetilde{G}\left(  t\,(1-r)^{\alpha}\right)  ]\,\alpha r^{\alpha-1}\,dr &
\text{if }\alpha\in(0,1]\text{,}\\
& \\
t\,[1-\widetilde{G}(t)] & \text{if }\alpha=0\text{,}%
\end{array}
\right. \label{Eq_MagicFormula2}%
\end{equation}
and $G$ is Lipschitz continuous on $[0,\infty)$.
\end{theorem}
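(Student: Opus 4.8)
The plan is to derive Theorem \ref{T_Ret_versus_Hit2} from the general machinery of Darling--Kac sets by exploiting the fact that, for a DK-set $Y$, the induced return-time process behaves asymptotically like a renewal process whose increments lie in the domain of a stable law of index $\alpha$. The key structural identity is the decomposition (\ref{Eq_sdhfgjsdhgfjshdfjsafjlsagfhfgjsdhfjdshf}), $\varphi_{E_{k}} = \sum_{j=0}^{\varphi_{E_{k}}^{Y}-1}\varphi_{Y}\circ T_{Y}^{j}$ on $Y$, which expresses the first entrance time of $E_k$ as a random sum of i.i.d.-like return times to $Y$ over the induced dynamics. The first step is therefore to recall (from \cite{A0}, \cite{T4}, or the $\mathcal{U}$-uniform set theory of \cite{PSZ2}) that $\mu_{Y}(\varphi_{Y}>n)$ is regularly varying of index $-(1-\alpha)$, equivalently that the partial sums $S_m^Y := \sum_{j=0}^{m-1}\varphi_{Y}\circ T_{Y}^{j}$, suitably normalized by $b_T(m)$, converge in distribution under $\mu_Y$ to the one-sided stable law $\mathcal{G}_\alpha$ of index $\alpha$ (with the $\alpha=0$ case degenerating appropriately, which is why that case is handled separately in (\ref{Eq_MagicFormula2})). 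This is a standard consequence of Aaronson's asymptotic renewal theory together with the DK property.

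Next I would set $n_k := \mu(E_k)^{-1}$ (roughly the expected number of induced returns to $Y$ before hitting $E_k$, by Kac' formula applied to $T_Y$: $\int_{E_k}\varphi_{E_k}^Y\,d\mu_{E_k} = \mu(Y)/\mu(E_k)$), and analyze $\varphi_{E_k}^{Y}$ itself. Under $\mu_Y$, the normalized entrance count $\mu(E_k)\,\varphi_{E_k}^{Y}$ should be asymptotically exponential — this is exactly the kind of statement that the $\mathcal{U}$-uniform / DK framework delivers for hitting times of small sets, and convergence here is strong distributional convergence in the sense of \cite{A0}, so the limit is insensitive to the initial law (cf. Remark \ref{Rem_Comments1}(g) and \cite{Z7}). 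The hypothesis is that $\mu_{E_k}(\mu(E_k)\,a_T(\varphi_{E_k})\le t)$ converges; via the decomposition and the stable limit for $S_m^Y$, this pins down the joint asymptotic behaviour of the pair $(\varphi_{E_k}^Y, \varphi_{E_k})$ under $\mu_{E_k}$. The crucial point is that $a_T$ is, up to asymptotic equivalence, the inverse of $b_T$, so $\mu(E_k)\,a_T(\varphi_{E_k}) \approx a_T(\varphi_{E_k})/n_k$ and the regular variation of $a_T\in\mathcal{R}_\alpha$ lets one convert between "time" ($\varphi$) and "index" ($a_T(\varphi)$) scales: when $\varphi_{E_k}^Y = \lfloor r n_k\rfloor$, the $r n_k$ summands contribute $S\sim b_T(rn_k)\,\mathcal{G}_\alpha \sim r^{1/\alpha} b_T(n_k)\,\mathcal{G}_\alpha$, so $a_T$ of this is $\sim r\, a_T(b_T(n_k))\,\mathcal{G}_\alpha^\alpha = r n_k\,\mathcal{G}_\alpha^\alpha$, giving $\mu(E_k)\,a_T(\varphi_{E_k}) \Rightarrow R\cdot\mathcal{G}_\alpha^\alpha$ where $R$ is (under $\mu_{E_k}$) the limit of $\mu(E_k)\varphi_{E_k}^Y$ and $\mathcal{G}_\alpha^\alpha$ has the Mittag-Leffler-type law; the factor $(1-r)^\alpha$ in (\ref{Eq_MagicFormula2}) comes from the overshoot/renewal structure once one passes from the return viewpoint ($\mu_{E_k}$, where the last excursion completes) to the hitting viewpoint ($\mu_Y$, where the chain is caught mid-excursion).

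The heart of the argument is the passage from the return law to the hitting law. Under $\mu_Y$, the event $\{\gamma_T\text{-scaled }\varphi_{E_k}\le t\}$ decomposes according to the (random) number of completed induced excursions and the position within the current excursion; the number of completed excursions before time corresponding to "index $t$" is itself asymptotically governed by the inverse stable subordinator, which produces the Beta-type density $\alpha r^{\alpha-1}$ and the residual factor, exactly as in the classical Dynkin--Lamperti / renewal-theoretic computations underlying Theorem HLV. Concretely, I would condition on the index $rt$ at which the "long" excursion straddling level $t$ begins: the probability that the previous partial sum of $a_T$-distorted increments is at level $rt$ is $\alpha r^{\alpha-1}dr$ (the arcsine-type law for $\alpha\in(0,1)$, trivializing at $\alpha=1$), and given that, the conditional probability that $E_k$ is not hit is $1-\widetilde G(t(1-r)^\alpha)$ — the $(1-r)^\alpha$ again being the index-to-time conversion of the remaining mass $t(1-r)$. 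Integrating in $r$ yields (\ref{Eq_MagicFormula2}); the degenerate case $\alpha=0$ collapses the integral to the single term $t[1-\widetilde G(t)]$. The main obstacle I anticipate is making the joint convergence rigorous: one needs that the pair (number of induced returns, total $a_T$-distorted time) converges jointly, uniformly over the relevant range, and that the "mixing" of the small set $E_k$ with the stable excursion structure is asymptotically independent — this requires the full strength of the DK/uniform-set hypothesis (to control $\widehat{T}_Y$-iterates uniformly) rather than just marginal tail asymptotics, and is precisely where the regular variation of $a_T$ and the Darling--Kac property must be combined carefully. Continuity of $G$ on $[0,\infty)$ then follows from the explicit integral formula, since the integrand is bounded and the prefactor $t$ (or the convolution with $\alpha(t-s)^{\alpha-1}$) is continuous.
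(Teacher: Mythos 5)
Your plan diverges fundamentally from the paper's proof, and it has a conceptual error that would derail it.

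The paper's argument does not pass through the excursion decomposition $\varphi_{E_{k}}=\sum_{j=0}^{\varphi_{E_{k}}^{Y}-1}\varphi_{Y}\circ T_{Y}^{j}$, nor through any stable-law statement about $S_{m}^{Y}=\sum_{j<m}\varphi_{Y}\circ T_{Y}^{j}$. Instead it rests on the transfer-operator identity (Lemma~\ref{L_ED_basic_decomposition}), which decomposes $\mu(Y\cap\{\varphi_{E_{k}}\leq n\})$ according to the \emph{last} visit to $E_{k}$ before time $n$: with $n_{k}=b_{T}(t/\mu(E_{k}))$ one gets $G_{k}(t)=\sum_{l=1}^{n_{k}}\int_{E_{k}\cap\{\varphi_{E_{k}}>n_{k}-l\}}\widehat{T}^{l}1_{Y}\,d\mu$. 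The DK property then gives uniform control of $\sum_{l\in[m n_{k}/M,(m+1)n_{k}/M]}\widehat{T}^{l}1_{Y}\sim((\tfrac{m+1}{M})^{\alpha}-(\tfrac{m}{M})^{\alpha})a_{n_{k}}$ on $Y$, and since $\mu_{E_{k}}(\varphi_{E_{k}}>n_{k}-l)=1-\widetilde{G}_{k}(\vartheta_{k,l})$ with $\vartheta_{k,l}\to t(1-r)^{\alpha}$ when $l\sim r n_{k}$, the Riemann-sum over blocks produces the $\alpha r^{\alpha-1}$ density in~(\ref{Eq_MagicFormula2}) with no probabilistic limit theorem, no renewal theory for the induced map, and no joint-convergence issues. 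The converse direction is handled by a Helly compactness/uniqueness argument. So the structure you anticipate (Dynkin--Lamperti style arcsine density, index-to-time conversion via regular variation of $a_{T}$) is the right intuition for the shape of the formula, but the paper's mechanism for making it rigorous is the dual-operator decomposition, not the induced renewal process.

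There is moreover a genuine error in your route, not just a missing technicality. You assert that $\mu(E_{k})\varphi_{E_{k}}^{Y}$ is asymptotically exponential under $\mu_{Y}$ as a consequence of the DK or $\mathcal{U}$-uniform hypothesis. This cannot be true for arbitrary $E_{k}\subseteq Y$ with $\mu(E_{k})\to 0$: the theorem in question is an \emph{iff} statement with an arbitrary limit pair $(\widetilde{G},G)$, and (via Remark~\ref{Rem_Comments1}(f) and Theorem~\ref{T_Cge_to_H_alpha}) the specific law $\mathcal{H}_{\alpha}$ arises precisely when $\widetilde{G}=G$. If the induced hitting time were always asymptotically exponential, the induced system (a finite measure preserving system for $T_{Y}$, to which Theorem HLV applies) would force a unique return law, contradicting the freedom in $\widetilde{G}$ that the theorem, and Proposition~\ref{Prop_VeryTrivialProp}, exhibit. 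Relatedly, the random sum $S_{\varphi_{E_{k}}^{Y}}^{Y}$ has a stopping-time index that is \emph{not} independent of the excursion lengths $\varphi_{Y}\circ T_{Y}^{j}$, so the factorization $\mu(E_{k})a_{T}(\varphi_{E_{k}})\Rightarrow R\cdot\mathcal{G}_{\alpha}^{\alpha}$ with independent $R$ would need substantial justification that the sketch does not provide and that you correctly flag as the main obstacle. Finally, for $\alpha=0$ continuity of $G$ does \emph{not} follow from the formula $G(t)=t[1-\widetilde{G}(t)]$ alone, since $\widetilde{G}$ may a priori have jumps; the paper proves a separate Lipschitz estimate $G(t')-G(t)\leq t'-t$ in step (v) of the $\alpha=0$ proof.
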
%

%TCIMACRO{\TeXButton{VSs}{\vspace{0.2cm}}}%
%BeginExpansion
\vspace{0.2cm}%
%EndExpansion

The proofs of the two theorems are given in the next section.

\begin{remark}
For $\alpha\in(0,1]$, the statement of Theorem \ref{T_Ret_versus_Hit2} is
equivalent to Theorem \ref{T_Ret_versus_Hit}, with $(\widetilde{G}%
(t),G(t))=(\widetilde{F}(t^{1/\alpha}),F(t^{1/\alpha}))$, see Lemma
\ref{L_RegVarDistortion} below. Remark \ref{Rem_Comments1} translates accordingly.
\end{remark}

Regarding the $\alpha=0$ case, some related facts are recorded in

\begin{remark}
[\textbf{More comments}]\label{Rem_Comments2}\textbf{a)} If $G\in\mathcal{F}$
satisfies
\begin{equation}
G(t)=t[1-G(t)]\text{ \quad for }t\geq0\text{,}%
\end{equation}
then $G=H^{\ast}$, which is the $\alpha=0$ limit law from \cite{PS},
\cite{PS2}, and \cite{PSZ}. \newline\textbf{b)} If $G,\widetilde{G}%
\in\mathcal{F}$ satisfy $G(t)=t\,[1-\widetilde{G}(t)]$ for $t\geq0$, as in the
$\alpha=0$ case of (\ref{Eq_MagicFormula2}), then $\widetilde{G}$ is
necessarily a probability distribution function on $[0,\infty)$, because
$G\leq1$. Moreover, in the $\alpha=0$ case, $\widetilde{G}$ is continuous on
$[0,\infty)$ since $G$ is.\newline\textbf{c)} Regarding the right-hand side of
(\ref{Eq_MagicFormula2}), note that for every $\widetilde{G}\in\mathcal{F}$
and every continuity point $t>0 $ of $\widetilde{G}$ we have
\[
\int_{0}^{1}[1-\widetilde{G}\left(  t\,(1-r)^{\alpha}\right)  ]\,\alpha
r^{\alpha-1}\,dr\longrightarrow1-\widetilde{G}(t)\text{ \quad as }%
\alpha\searrow0\text{.}%
\]
\newline
\end{remark}

Concerning changes of measure for distorted hitting times, we record the
following counterpart to Lemma \ref{L_StrongDCge}.

\begin{lemma}
[\textbf{Strong distributional convergence distorted hitting times}%
]\label{L_StrongDCgeBarely}Let $T$ be a pointwise dual ergodic c.e.m.p.t. on
$(X,\mathcal{A},\mu)$ and $(E_{k})$ a sequence in $\mathcal{A}$ with
$0<\mu(E_{k})\rightarrow0$. If there are some $G\in\mathcal{F}$ and some
probability $\nu^{\ast}\ll\mu$ s.t.
\begin{equation}
\nu^{\ast}(\mu(E_{k})\,a_{T}(\varphi_{E_{k}})\leq t)\Longrightarrow G(t)\text{
\quad as }k\rightarrow\infty\text{,}%
\end{equation}
then
\begin{equation}
\nu(\mu(E_{k})\,a_{T}(\varphi_{E_{k}})\leq t)\Longrightarrow G(t)\text{ \quad
as }k\rightarrow\infty\text{ \quad for all }\nu\ll\mu\text{.}%
\end{equation}

\end{lemma}

\begin{proof}
According to Theorem 1 of \cite{Z7} we need only check that the sequence
$(\mathsf{R}_{k})_{k\geq1}$ with $\mathsf{R}_{k}:=\mu(E_{k})\,a_{T}%
(\varphi_{E_{k}})$ is asymptotically $T$-invariant in measure in that
$\mathsf{R}_{k}-\mathsf{R}_{k}\circ T\longrightarrow^{\nu}0$ for every finite
$\nu\ll\mu$. But (\ref{Eq_nbmcxnbhdfvbmchvvrhf}) and (\ref{Eq_AnSubadd}), show
that
\begin{align*}
\mathsf{R}_{k}-\mathsf{R}_{k}\circ T  & =\mu(E_{k})\left(  a_{T}%
(\varphi_{E_{k}}\circ T+1)-a_{T}(\varphi_{E_{k}}\circ T)\right) \\
& \leq\mu(E_{k})\,a_{T}(1)\text{ \quad on }\{\varphi_{E_{k}}>1\}=T^{-1}%
E_{k}^{c}\text{,}%
\end{align*}
and the required property follows since $\mu(E_{k})\rightarrow0$.
\end{proof}%

%TCIMACRO{\TeXButton{VSs}{\vspace{0.2cm}}}%
%BeginExpansion
\vspace{0.2cm}%
%EndExpansion

While the $a_{T}(\varphi_{E_{k}})$ do exhibit nontrivial asymptotic
distributional behaviour in the interesting $\alpha=0$ situations mentioned
above, the original $\varphi_{E_{k}}$ do not. (Hence, studying the
$a_{T}(\varphi_{E_{k}})$ is the right thing to do.) A formal version of this
statement is immediate from the following general fact.

\begin{proposition}
[\textbf{No way back from} $\ell(R_{n})$ \textbf{to} $R_{n}$]%
\label{P_NoWayBack}Let $\ell:[0,\infty)\rightarrow\lbrack0,\infty)$ be a
slowly varying homeomorphism. Assume that $(R_{n})$ is a sequence of
$[0,\infty]$-valued random variables with $R_{n}\Longrightarrow\infty$ for
which $(\ell(R_{n}))$ has a continuous limit distribution on $(0,\infty)$,
that is, there are $(\eta_{n})$ in $(0,\infty)$ and a continuous random
variable $L$ with $0<L<\infty$ a.s., such that
\begin{equation}
\eta_{n}\,\ell(R_{n})\Longrightarrow L\text{ \quad as }n\rightarrow
\infty\text{.}\label{Eq_StronglyDistortedCge}%
\end{equation}
Then any limit distribution of $(R_{n})$ is concentrated on $\{0,\infty\}$,
that is, if
\begin{equation}
\gamma_{n}\,R_{n}\Longrightarrow R\text{ \quad as }n\rightarrow\infty
\text{,}\label{Eq_OrdinaryCge}%
\end{equation}
for some $(\gamma_{n})$ in $(0,\infty)$ and some random variable $R$, then
$\Pr[R\in\{0,\infty\}]=1$.
\end{proposition}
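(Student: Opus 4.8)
The plan is to argue by contradiction, exploiting the fact that a slowly varying function is, up to normalization, essentially insensitive to multiplicative perturbations on any fixed scale. Suppose both \eqref{Eq_StronglyDistortedCge} and \eqref{Eq_OrdinaryCge} hold but $\Pr[R\in\{0,\infty\}]<1$. Then there is a compact interval $[a,b]\subseteq(0,\infty)$ on which the distribution function of $R$ strictly increases; pick continuity points $a<b$ of this distribution function with $\Pr[a<R\le b]=:p>0$. The first step is to translate this into a statement about the $R_n$ themselves: for all large $n$, $\Pr[a/\eta_n < R_n \le b/\eta_n]$ is close to $p$, so a non-negligible part of the mass of $R_n$ sits in a window $[a/\eta_n, b/\eta_n]$ whose endpoints have a bounded ratio $b/a$.

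Next I would feed this window into the slowly varying function. Since $R_n\Longrightarrow\infty$, we have $\eta_n\to 0$ (otherwise the window would not escape to infinity — more precisely, $R_n\Rightarrow\infty$ forces $b/\eta_n\to\infty$), so the arguments $R_n$ in this window tend to infinity uniformly; by the uniform convergence theorem for slowly varying functions (Theorem 1.2.1 in \cite{BGT}), $\ell(\lambda s)/\ell(s)\to 1$ uniformly for $\lambda$ in the compact set $[1, b/a]$ as $s\to\infty$. Consequently, on the event $\{a/\eta_n < R_n \le b/\eta_n\}$ we have
\[
\frac{\ell(R_n)}{\ell(a/\eta_n)}\longrightarrow 1\quad\text{uniformly, in probability,}
\]
so that $\gamma_n\ell(R_n)$, restricted to this event, is asymptotically equal to the deterministic quantity $c_n:=\gamma_n\ell(a/\eta_n)$ times something converging to $1$. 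In other words, a mass of at least $p/2$ (say) of the distribution of $\gamma_n\ell(R_n)$ gets squeezed into an arbitrarily small neighbourhood of $c_n$.

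Now I would derive the contradiction from \eqref{Eq_StronglyDistortedCge}. If $c_n$ stays in a compact subset of $(0,\infty)$ along a subsequence, it has a further subsequential limit $c\in(0,\infty)$, and the squeezing above forces the limit law $L$ to put mass $\ge p/2$ on the single point $c$ — contradicting the assumed continuity of $L$. If instead $c_n\to 0$ (resp. $c_n\to\infty$) along a subsequence, then that same mass $p/2$ of $\gamma_n\ell(R_n)$ escapes toward $0$ (resp. $\infty$), contradicting $\gamma_n\ell(R_n)\Longrightarrow L$ with $0<L<\infty$ a.s. (a proper, non-degenerate law admits no escape of mass). Either way we reach a contradiction, so $\Pr[R\in\{0,\infty\}]=1$. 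The main obstacle is the bookkeeping in the second step: one must be careful that the ``uniformly in probability'' claim is legitimate — i.e. that conditioning on the window $\{a/\eta_n<R_n\le b/\eta_n\}$ does not interfere with the purely deterministic uniform-convergence estimate for $\ell$ — and that the boundary continuity points are chosen so that no mass leaks across $a/\eta_n$ or $b/\eta_n$ in the limit; both are routine once the slowly varying uniformity is invoked correctly, but they are the points where a sloppy argument would break.
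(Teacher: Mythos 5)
Your argument is correct and follows essentially the same path as the paper: localize $R_n$ in a multiplicative window escaping to infinity, use Karamata's uniform convergence theorem to force a fixed positive mass of $\gamma_n\ell(R_n)$ to concentrate near a deterministic point $c_n$, and then contradict the continuity and properness of $L$ by a case analysis on the subsequential behaviour of $c_n$. The paper packages that final step slightly differently --- renormalizing by $\widetilde{\gamma}_n=\ell(1/\eta_n)^{-1}$ to see an atom forming at $1$ along a subsequence and then invoking the convergence-of-types theorem --- but the underlying idea is identical.
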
%

%TCIMACRO{\TeXButton{VSs}{\vspace{0.2cm}}}%
%BeginExpansion
\vspace{0.2cm}%
%EndExpansion

\section{Proofs for Sections 2 and 4\label{Sec_Pf_Relation_Ret_vs_Hit}}

\textbf{General setup.} As a warm-up we provide the

\begin{proof}
[\textbf{Proof of Proposition \ref{P_NoWayBack}}]Assume, for a contradiction,
that (\ref{Eq_OrdinaryCge}) holds and $\Pr[e^{-c}<R\leq e^{c}]>0$ for some
$c>1$. Then there are $\kappa>0$ and $n_{0}\geq1$ such that the events
$A_{n}:=\{e^{-2c}<\gamma_{n}\,R_{n}\leq e^{2c}\}$ satisfy $\Pr[A_{n}%
]\geq\kappa$ for $n\geq n_{0}$. Set $\overline{\eta}_{n}:=1/\ell(1/\gamma
_{n})$ and $t_{n}^{\pm}:=\overline{\eta}_{n}\ell(e^{\pm2c}/\gamma_{n})$, so
that the above becomes
\begin{equation}
\Pr[t_{n}^{-}<\overline{\eta}_{n}\,\ell(R_{n})\leq t_{n}^{+}]\geq\kappa\text{
\quad for }n\geq n_{0}\text{.}\label{Eq_GanzEinfach}%
\end{equation}
Since $R_{n}\Longrightarrow\infty$, we have $\gamma_{n}\rightarrow0$, and slow
variation of $\ell$ yields $t_{n}^{\pm}\rightarrow1$.

Passing to a subsequence if necessary, we may assume that there is some
$[0,\infty]$-valued random variable $\overline{L}$ such that
\begin{equation}
\overline{\eta}_{n}\,\ell(R_{n})\Longrightarrow\overline{L}\text{ \quad as
}n\rightarrow\infty\text{.}%
\end{equation}
Now (\ref{Eq_GanzEinfach}) and $t_{n}^{\pm}\rightarrow1$ clearly imply
$\Pr[\overline{L}=1]\geq\kappa>0$. In view of the standard \emph{convergence
of types} theorem, applied to $(\ell(R_{n}))$, this contradicts our assumption
(\ref{Eq_StronglyDistortedCge}), where $L$ is a \emph{continuous} variable in
$(0,\infty)$.
\end{proof}%

%TCIMACRO{\TeXButton{VSs}{\vspace{0.2cm}}}%
%BeginExpansion
\vspace{0.2cm}%
%EndExpansion

Throughout the rest of this section, assume (as in the theorems) that $T$ be a
c.e.m.p.t. on $(X,\mathcal{A},\mu)$, $\mu(X)=\infty$, and that $0<\mu
(Y)<\infty$. We begin with the

\begin{proof}
[\textbf{Proof of Theorem \ref{Thm_SameRetDistButDivergentHitDist}}%
]\textbf{(i)} We first recall that for any $B\in\mathcal{A}$ of finite
measure,
\begin{equation}
\frac{1}{n}\sum_{k=0}^{n-1}\int_{B}\widehat{T}^{k}1_{Y}\,d\mu\longrightarrow
0\text{ \quad as }n\rightarrow\infty\text{.}\label{Eq_LocalDecayOfMass}%
\end{equation}
Observe next that for every sequence $(m_{k})$ of integers $m_{k}\geq0$ the
sets $E_{k}^{\prime}:=T^{-m_{k}}E_{k}$, $k\geq1$, satisfy
\[
\mu_{E_{k}^{\prime}}(\gamma_{T}(\mu(E_{k}^{\prime}))\,\varphi_{E_{k}^{\prime}%
}\leq t)=\mu_{E_{k}}(\gamma_{T}(\mu(E_{k}))\,\varphi_{E_{k}}\leq t)\text{
\quad for }t\geq0\text{, }k\geq1\text{.}%
\]
Indeed, for $E\in\mathcal{A}$ with $0<\mu(E)<\infty$, and integers $m\geq0$,
it is immediate that
\begin{equation}
\varphi_{T^{-m}E}=\varphi_{E}\circ T^{m}\text{ \quad and \quad}\mu_{T^{-m}%
E}\circ T^{-m}=\mu_{E}\text{,}%
\end{equation}
and thus
\begin{equation}
\text{\textsf{law}}_{\mu_{T^{-m}E}}[\gamma(\mu(T^{-m}E))\,\varphi_{T^{-m}%
E}]=\text{\textsf{law}}_{\mu_{E}}[\gamma(\mu(E))\,\varphi_{E}]\text{.}%
\end{equation}
\textbf{(ii)} Fix any $k\geq1$ and set $B_{k}:=\{\varphi_{E_{k}}\leq
k/\gamma(\mu(E_{k}))\}=%
%TCIMACRO{\tbigcup _{j=1}^{\left\lfloor k/\gamma(\mu(E_{k}))\right\rfloor }}%
%BeginExpansion
{\textstyle\bigcup_{j=1}^{\left\lfloor k/\gamma(\mu(E_{k}))\right\rfloor }}
%EndExpansion
T^{-j}E_{k}\in\mathcal{A}$, which has finite measure. If $m\geq0$ and
$E^{\prime}:=T^{-m}E_{k}$, then $\{\gamma(\mu(E^{\prime}))\varphi_{E^{\prime}%
}\leq k\}=T^{-m}B_{k}$ and hence
\[
\mu_{Y}(\gamma(\mu(E^{\prime}))\,\varphi_{E^{\prime}}\leq k)=\mu_{Y}%
(T^{-m}B_{k})=\mu(Y)^{-1}\int_{B_{k}}\widehat{T}^{m}1_{Y}\,d\mu\text{.}%
\]
But in view of (\ref{Eq_LocalDecayOfMass}) there is some $m=:m_{k}$ for which
$\int_{B_{k}}\widehat{T}^{m}1_{Y}\,d\mu<\mu(Y)/k$, and we take $E_{k}^{\prime
}:=T^{-m_{k}}E_{k}$.
\end{proof}%

%TCIMACRO{\TeXButton{VSs}{\vspace{0.2cm}}}%
%BeginExpansion
\vspace{0.2cm}%
%EndExpansion
%

%TCIMACRO{\TeXButton{NI}{\noindent}}%
%BeginExpansion
\noindent
%EndExpansion
\textbf{Arguments involving uniform sets.} Assume now that the system is
pointwise dual ergodic with $a_{T}\in\mathcal{R}_{\alpha}$ for some $\alpha
\in\lbrack0,1]$, and let $b_{T}$ be asymptotically inverse to $a_{T} $.
Suppose that $Y$ is a $u$-uniform set, w.l.o.g. with $\mu(Y)=1$, and that
$E_{k}\subseteq Y$, $k\geq1$, are sets of positive finite measure with
$\mu(E_{k})\rightarrow0$.

Our argument exploits the Ansatz of \cite{PSZ2} (which goes back to
\cite{DE}), and uses several auxiliary facts already mentioned or established
there. The following (which slightly generalizes Lemma 5.3 of \cite{PSZ2}) is
our starting point.

\begin{lemma}
[\textbf{Decomposition according to the last visit before time} $n$%
]\label{L_ED_basic_decomposition}Let $T$ be a c.e.m.p.t. on $(X,\mathcal{A}%
,\mu)$, and $B\in\mathcal{A}$ while $u\in\mathcal{D}(\mu)$. Then
\begin{equation}
\int_{\{\varphi_{B}\leq n\}}u\,d\mu=\sum_{l=1}^{n}\int_{B\cap\{\varphi
_{B}>n-l\}}\widehat{T}^{l}u\,d\mu\text{ \quad for }n\geq0\text{.}%
\label{Eq_ED_basic_decomposition}%
\end{equation}

\end{lemma}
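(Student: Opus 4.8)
The plan is to prove the identity by classifying each point $x\in A$ according to the last time $l\le n$ at which its orbit visits $B$, or by the fact that the orbit never visits $B$ up to time $n$. Formally, partition $A$ into the set $A\cap\{\varphi_B>n\}$ (points whose first entry to $B$ occurs after time $n$, i.e.\ $T^jx\notin B$ for all $1\le j\le n$) together with the sets $C_l:=\{x\in A: T^lx\in B\text{ and }T^jx\notin B\text{ for }l<j\le n\}$ for $1\le l\le n$. These are pairwise disjoint and exhaust $A$, so $\mu(A)=\mu(A\cap\{\varphi_B>n\})+\sum_{l=1}^n\mu(C_l)$. It remains to identify $\mu(C_l)$ with $\int_{B\cap\{\varphi_B>n-l\}}\widehat{T}^l1_A\,d\mu$.

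First I would rewrite the condition defining $C_l$: the requirement ``$T^lx\in B$ and $T^jx\notin B$ for $l<j\le n$'' says exactly that $T^lx\in B$ and $\varphi_B(T^lx)>n-l$ (the orbit of $T^lx$ avoids $B$ for the next $n-l$ steps). Hence $C_l = A\cap T^{-l}\bigl(B\cap\{\varphi_B>n-l\}\bigr)$, and therefore
\[
\mu(C_l)=\int_X 1_A\cdot\bigl(1_{B\cap\{\varphi_B>n-l\}}\circ T^l\bigr)\,d\mu .
\]
Now apply the defining duality relation of the transfer operator, $\int u\cdot(v\circ T)\,d\mu=\int\widehat Tu\cdot v\,d\mu$, iterated $l$ times (equivalently, $\int u\cdot(v\circ T^l)\,d\mu=\int\widehat T^l u\cdot v\,d\mu$), with $u=1_A$ and $v=1_{B\cap\{\varphi_B>n-l\}}$. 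This yields $\mu(C_l)=\int_{B\cap\{\varphi_B>n-l\}}\widehat T^l1_A\,d\mu$, which is precisely the $l$-th summand. Summing over $l$ gives \eqref{Eq_ED_basic_decomposition}. For the boundary case $n=0$ the sum is empty and the identity reduces to $\mu(A)=\mu(A\cap\{\varphi_B>0\})=\mu(A)$, which holds since $\varphi_B\ge1$ everywhere.

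The only point requiring a little care — and the natural candidate for the ``main obstacle'', though it is minor — is the measurability and the precise combinatorial description of the sets $C_l$, i.e.\ verifying cleanly that ``last visit to $B$ in $\{1,\dots,n\}$ happens at time $l$'' translates into the stated condition on $T^lx$ and that these events genuinely partition $A$; once this bookkeeping is in place, the rest is a direct application of the adjoint property of $\widehat T$. No conservativity or ergodicity is needed here — the identity is purely measure-theoretic and holds for any measure preserving $T$, as stated.
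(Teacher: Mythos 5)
Your proof is correct, and it is exactly the argument the paper intends (the lemma is not proved in the text but is quoted from Lemma~5.3 of [PSZ2], and its very title, ``decomposing according to the last visit before time $n$'', describes your partition into $A\cap\{\varphi_B>n\}$ and the sets $C_l$). The identification $C_l=A\cap T^{-l}(B\cap\{\varphi_B>n-l\})$ and the passage to $\widehat{T}^l$ via the duality relation are the standard steps; since $\widehat T$ is extended to all nonnegative measurable functions, no integrability of $1_A$ is needed, and your observation that the statement requires nothing beyond measure preservation is accurate.
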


\begin{proof}
Fix $n\geq0$ and decompose $\{\varphi_{B}\leq n\}$ according to the last
instant $l\in\{1,\ldots,n\}$ at which the orbit visits $B$ to get
\[
\{\varphi_{B}\leq n\}=%
%TCIMACRO{\tbigcup \nolimits_{l=1}^{n}}%
%BeginExpansion
{\textstyle\bigcup\nolimits_{l=1}^{n}}
%EndExpansion
T^{-l}(B\cap\{\varphi_{B}>n-l\})\text{ \quad(disjoint).}%
\]
Now measure these sets using the probability with density $u$.
\end{proof}%

%TCIMACRO{\TeXButton{VSs}{\vspace{0.3cm}}}%
%BeginExpansion
\vspace{0.3cm}%
%EndExpansion

As our goal is to prove (\ref{Eq_AbstrDistrCgeI}) and
(\ref{Eq_AbstrDistrCgeIIIa}), we define
\begin{equation}
\mathsf{R}_{k}:=\mu(E_{k})\,a_{T}(\varphi_{E_{k}})\text{, \quad for }%
k\geq1\text{,}\label{Eq_DefRk}%
\end{equation}
and denote the relevant distribution functions by $\widetilde{G}_{k}$ and
$G_{k}$,
\begin{equation}
\widetilde{G}_{k}(t):=\mu_{E_{k}}(\mathsf{R}_{k}\leq t)\text{,\quad\ }%
G_{k}(t):=\mu_{Y}(\mathsf{R}_{k}\leq t)\text{\quad for }k\geq1\text{, }%
t\in\lbrack0,\infty)\text{.}%
\end{equation}
It is convenient to set, for $t\in\lbrack0,\infty)$ and $k\geq1$,
\begin{equation}
n_{k}^{[t]}:=b_{T}\left(  t/\mu(E_{k})\right)  \text{,}\label{Eq_DefNk}%
\end{equation}
and, for $l\in\{0,\ldots,n_{k}^{[t]}\}$, $\vartheta_{k,l}^{[t]}:=\mu
(E_{k})\cdot a_{T}(n_{k}^{[t]}-l)$. These allow us to represent some important
events, as
\begin{equation}
\left\{  \mathsf{R}_{k}>t\right\}  =\{\varphi_{E_{k}}>n_{k}^{[t]}\}\text{,
\quad while \quad}\{\mathsf{R}_{k}>\vartheta_{k,l}^{[t]}\}=\{\varphi_{E_{k}%
}>n_{k}^{[t]}-l\}\text{.}\label{Eq_Meaning_Nk_Thetak}%
\end{equation}
Note that, for any fixed $t$ and $k$, the function $l\mapsto\vartheta
_{k,l}^{[t]}$ is non-increasing.%

%TCIMACRO{\TeXButton{VSs}{\vspace{0.3cm}}}%
%BeginExpansion
\vspace{0.3cm}%
%EndExpansion

We are now ready for the

\begin{proof}
\textbf{Proof of Theorem \ref{T_Ret_versus_Hit2}, case }$\alpha\in
(0,1]$\textbf{. (i)} note first that given $\rho\in\lbrack0,1)$ and a sequence
$(l_{k})_{k\geq1}$ in $(0,\infty)$,
\begin{equation}
\text{if \quad}l_{k}\sim\rho\cdot n_{k}^{[t]}\text{, \quad then \quad
}\vartheta_{k,l_{k}}^{[t]}\sim t\cdot(1-\rho)^{\alpha}\text{ \quad as
}k\rightarrow\infty\text{.}\label{Eq_AsyTheta}%
\end{equation}
Moreover, by the uniform set property of $Y$, if $0\leq c_{1}<c_{2}$, then
\begin{equation}
\sum_{j=c_{1}n}^{c_{2}n-1}\widehat{T}^{j}u\sim(c_{2}^{\alpha}-c_{1}^{\alpha
})\cdot a_{n}\qquad\text{as }n\rightarrow\infty\text{, uniformly mod }%
\mu\text{ on }Y\text{.}\label{Eq_DK_Sections}%
\end{equation}
%

%TCIMACRO{\TeXButton{NI}{\noindent}}%
%BeginExpansion
\noindent
%EndExpansion
\textbf{(ii)} To exploit the fact that $Y$ is a $u$-uniform set, we consider
hitting-time distributions with respect to the probability with density $u$,
and set
\begin{equation}
G_{k}^{u}(t):=\int_{\{\mathsf{R}_{k}\leq t\}}u\,d\mu\text{, \quad for }%
k\geq1\text{, }t\in\lbrack0,\infty)\text{.}\label{Eq_DefGu}%
\end{equation}
In view of Remark \ref{Rem_Comments1} g), we see that
(\ref{Eq_AbstrDistrCgeIIIa}) is equivalent to
\begin{equation}
G_{k}^{u}(t)\Longrightarrow G(t)\text{ \quad as }k\rightarrow\infty
\text{,}\label{Eq_AbstrDistrCgeIIIau}%
\end{equation}
and for the rest of the proof we work with the latter condition.%

%TCIMACRO{\TeXButton{VSs}{\vspace{0.1cm}}}%
%BeginExpansion
\vspace{0.1cm}%
%EndExpansion
%

%TCIMACRO{\TeXButton{NI}{\noindent}}%
%BeginExpansion
\noindent
%EndExpansion
\textbf{(iii)} Assume that $\widetilde{G}_{k}\Rightarrow\widetilde{G}$, and
define $G$ via
\begin{equation}
G(t)=\alpha t\int_{0}^{1}[1-\widetilde{G}\left(  t\,(1-r)^{\alpha}\right)
]\,r^{\alpha-1}\,dr\text{ \quad for }t\in\lbrack0,\infty)\text{.}%
\label{Eq_ThisDefinesF}%
\end{equation}
We are going to prove that there is a dense subset $\mathcal{T}$ of
$[0,\infty)$ such that
\begin{equation}
G_{k}^{u}(t)\longrightarrow G(t)\text{ \quad for }t\in\mathcal{T}%
\text{,}\label{Eq_Convergence_for_nice_t}%
\end{equation}
It is then immediate that $G$ is a sub-probability distribution function on
$[0,\infty)$ for which $G_{k}^{u}\Longrightarrow G$.

To this end, let $\mathcal{T}$ be the set of those continuity points
$t\in(0,\infty)$ of $\widetilde{G}$ with the property that for all integers
$0\leq m\leq M$, the $t(1-\frac{m}{M})^{\alpha}$ also are continuity points of
$\widetilde{G}$.\ The complement of this set is only countable.

Henceforth, we fix some $t\in\mathcal{T}$, and abbreviate $n_{k}:=n_{k}^{[t]}$
and $\vartheta_{k,i}:=\vartheta_{k,i}^{[t]}$.%
%TCIMACRO{\TeXButton{NL}{\newline}}%
%BeginExpansion
\newline
%EndExpansion

Lemma \ref{L_ED_basic_decomposition}, with $B:=E_{k}$, and $n:=n_{k}$, gives
\begin{equation}
G_{k}^{u}(t)=\int_{\{\varphi_{E_{k}}\leq n_{k}\}}u\,d\mu=\sum_{l=1}^{n_{k}%
}\int_{E_{k}\cap\{\varphi_{E_{k}}>n_{k}-l\}}\widehat{T}^{l}u\,d\mu
\text{,}\label{Eq_FromDEDecomp}%
\end{equation}
and we are going to prove, for $k\rightarrow\infty$, that
\begin{equation}
\sum_{l=1}^{n_{k}}\int_{E_{k}\cap\{\varphi_{E_{k}}>n_{k}-l\}}\widehat{T}%
^{l}u\,d\mu\longrightarrow\alpha t\int_{0}^{1}[1-\widetilde{G}\left(
t\,(1-r)^{\alpha}\right)  ]\,r^{\alpha-1}\,dr\text{.}%
\label{Eq_Convergence_RHS}%
\end{equation}
%

%TCIMACRO{\TeXButton{VSs}{\vspace{0.1cm}}}%
%BeginExpansion
\vspace{0.1cm}%
%EndExpansion
%

%TCIMACRO{\TeXButton{NI}{\noindent}}%
%BeginExpansion
\noindent
%EndExpansion
\textbf{(iv)} Since $b_{T}\in\mathcal{R}_{1/\alpha}$, we have $n_{k}\sim
t^{1/\alpha}b_{T}(1/\mu(E_{k}))$ as $k\rightarrow\infty$. Fix some $M\geq1$,
and take any $\varepsilon\in(0,1)$. Decomposing the sum in
(\ref{Eq_Convergence_RHS}) into $M$ sections and recalling
(\ref{Eq_Meaning_Nk_Thetak}), we find that
\begin{align*}
\sum_{l=1}^{n_{k}}\int_{E_{k}\cap\{\varphi_{E_{k}}>n_{k}-l\}}\widehat{T}%
^{l}u\,d\mu & =\sum_{m=0}^{M-1}\sum_{l=\left\lfloor \frac{m}{M}n_{k}%
\right\rfloor +1}^{\left\lfloor \frac{m+1}{M}n_{k}\right\rfloor }\int
_{E_{k}\cap\{\mathsf{R}_{k}>\vartheta_{k,l}\}}\widehat{T}^{l}u\,d\mu\\
& \leq\sum_{m=0}^{M-1}\int_{E_{k}\cap\{\mathsf{R}_{k}>\vartheta
_{k,\left\lfloor \frac{m+1}{M}n_{k}\right\rfloor }\}}\sum_{l=\left\lfloor
\frac{m}{M}n_{k}\right\rfloor +1}^{\left\lfloor \frac{m+1}{M}n_{k}%
\right\rfloor }\widehat{T}^{l}u\,d\mu\text{,}%
\end{align*}
where the second step uses that, by monotonicity of $l\mapsto\vartheta_{k,l}$,
$\{\mathsf{R}_{k}>\vartheta_{k,l}\}\subseteq\{\mathsf{R}_{k}>\vartheta
_{k,\left\lfloor (m+1)n_{k}/M\right\rfloor }\}$ for $l\leq(m+1)n_{k}/M$. In
view of (\ref{Eq_DK_Sections}) and $a_{T}(n_{k})=t/\mu(E_{k})$ we have, for
$m\geq0$,
\[
\sum_{l=\left\lfloor \frac{m}{M}n_{k}\right\rfloor +1}^{\left\lfloor
\frac{m+1}{M}n_{k}\right\rfloor }\widehat{T}^{l}u\sim\left(  \left(
\frac{m+1}{M}\right)  ^{\alpha}-\left(  \frac{m}{M}\right)  ^{\alpha}\right)
\,\frac{t}{\mu(E_{k})}\qquad%
\begin{array}
[c]{c}%
\text{as }k\rightarrow\infty\text{,}\\
\text{uniformly mod }\mu\text{ on }Y\text{.}%
\end{array}
\]
Since $(\frac{m+1}{M})^{\alpha}-(\frac{m}{M})^{\alpha}\leq\alpha\frac{1}%
{M}(\frac{m}{M})^{\alpha-1}$ by the mean-value theorem, we thus get
\[
\sum_{l=1}^{n_{k}}\int_{E_{k}\cap\{\varphi_{E_{k}}>n_{k}-l\}}\widehat{T}%
^{l}u\,d\mu\leq e^{\varepsilon}\alpha\,t\sum_{m=1}^{M-1}\mu_{E_{k}}\left(
\mathsf{R}_{k}>\vartheta_{k,\left\lfloor \frac{m+1}{M}n_{k}\right\rfloor
}\right)  \left(  \frac{m}{M}\right)  ^{\alpha-1}\frac{1}{M}%
\]
for $k\geq K=K(M,\varepsilon)$. By our choice of $t$, $\widetilde{G}$ is
continuous at each $t(1-\frac{m+1}{M})^{\alpha}$, so that (\ref{Eq_AsyTheta})
ensures, as $k\rightarrow\infty$,
\begin{align*}
\mu_{E_{k}}\left(  \mathsf{R}_{k}>\vartheta_{k,\left\lfloor \frac{m+1}{M}%
n_{k}\right\rfloor }\right)   & =1-\widetilde{G}_{k}\left(  \vartheta
_{k,\left\lfloor \frac{m+1}{M}n_{k}\right\rfloor }\right) \\
& \longrightarrow1-\widetilde{G}\left(  t\left(  1-\frac{m+1}{M}\right)
^{\alpha}\right)  \text{.}%
\end{align*}
Combining this with the above, and letting $\varepsilon\searrow0$, we obtain
\[
\underset{k\rightarrow\infty}{\overline{\lim}}\sum_{l=1}^{n_{k}}\int
_{E_{k}\cap\{\varphi_{E_{k}}>n_{k}-l\}}\widehat{T}^{j}u\,d\mu\leq\alpha
\,t\sum_{m=1}^{M-1}\left[  1-\widetilde{G}\left(  t\left(  1-\frac{m+1}%
{M}\right)  ^{\alpha}\right)  \right]  \left(  \frac{m}{M}\right)  ^{\alpha
-1}\frac{1}{M}\text{.}%
\]
Now $M\rightarrow\infty$ yields
\[
\underset{k\rightarrow\infty}{\overline{\lim}}\sum_{l=1}^{n_{k}}\int
_{E_{k}\cap\{\varphi_{E_{k}}>n_{k}-l\}}\widehat{T}^{j}u\,d\mu\leq\alpha
t\int_{0}^{1}[1-\widetilde{G}\left(  t\,\left(  1-r\right)  ^{\alpha}\right)
]\,r^{\alpha-1}\,dr\text{.}%
\]
A parallel argument proves the corresponding lower estimate,
\[
\underset{k\rightarrow\infty}{\underline{\lim}}\sum_{l=1}^{n_{k}}\int
_{E_{k}\cap\{\varphi_{E_{k}}>n_{k}-l\}}\widehat{T}^{j}u\,d\mu\geq\alpha
t\int_{0}^{1}[1-\widetilde{G}\left(  t\,\left(  1-r\right)  ^{\alpha}\right)
]\,r^{\alpha-1}\,dr\text{,}%
\]
and hence our claim (\ref{Eq_Convergence_RHS}).%
%TCIMACRO{\TeXButton{NL}{\newline}}%
%BeginExpansion
\newline
%EndExpansion
%

%TCIMACRO{\TeXButton{NI}{\noindent}}%
%BeginExpansion
\noindent
%EndExpansion
\textbf{(v)} Now assume that $G_{k}^{u}\Rightarrow G$. Our goal is to show
that $\widetilde{G}_{k}\Rightarrow\widetilde{G}$ with $\widetilde{G}$
satisfying (\ref{Eq_ThisDefinesF}). In view of the Helly selection theorem we
need only check that whenever $\widetilde{G}_{k_{i}}\Rightarrow\widetilde
{G}_{\ast}$ for some subsequence $k_{i}\nearrow\infty$ of indices, this limit
point $\widetilde{G}_{\ast}$ is indeed the unique sub-distribution function
satisfying (\ref{Eq_ThisDefinesF}).

However, if we apply the conclusion of step (i) above, we see that
$\widetilde{G}_{k_{i}}\Rightarrow\widetilde{G}_{\ast}$ entails $G_{k_{i}}%
^{u}\Rightarrow G_{\ast}$ with the pair $(G_{\ast},\widetilde{G}_{\ast})$
satisfying the desired integral equation. Since $G_{k}^{u}\Rightarrow G$ it is
clear that $G_{\ast}=G$, so that in fact $(G,\widetilde{G}_{\ast})$ satisfying
the integral equation. \
\end{proof}%

%TCIMACRO{\TeXButton{VSs}{\vspace{0.1cm}}}%
%BeginExpansion
\vspace{0.1cm}%
%EndExpansion

A slight modification of the argument gives the

\begin{proof}
\textbf{Proof of Theorem \ref{T_Ret_versus_Hit2}, case }$\alpha=0$\textbf{.
(i)} As in the previous proof, we will work with the $G_{k}^{u}$ from
(\ref{Eq_DefGu}) rather than the $G_{k}$, and replace
(\ref{Eq_AbstrDistrCgeIIIa}) by (\ref{Eq_AbstrDistrCgeIIIau}).%

%TCIMACRO{\TeXButton{VSs}{\vspace{0.1cm}}}%
%BeginExpansion
\vspace{0.1cm}%
%EndExpansion
%

%TCIMACRO{\TeXButton{NI}{\noindent}}%
%BeginExpansion
\noindent
%EndExpansion
\textbf{(ii)} Assume that $\widetilde{G}_{k}\Rightarrow\widetilde{G}$. By a
subsequence-in-subsequence argument, we may assume that also $G_{k}%
^{u}\Rightarrow G$ for some $G\in\mathcal{F}$. We are going to prove that for
every point $t>0$ (henceforth fixed)\ at which both $G$ and $\widetilde{G}$
are continuous, we have
\begin{equation}
G_{k}^{u}(t)\longrightarrow t\,[1-\widetilde{G}\left(  t\right)  ]\text{ \quad
as }k\rightarrow\infty\text{,}\label{Eq_sjdhgfjekgfkerhfv}%
\end{equation}
whence $G(s)=s\,[1-\widetilde{G}\left(  s\right)  ]$, $s>0$. Abbreviate
$n_{k}:=n_{k}^{[t]}$.%
%TCIMACRO{\TeXButton{NL}{\newline}}%
%BeginExpansion
\newline
%EndExpansion

Observe that $\{\varphi_{E_{k}}>n_{k}/2\}=\{\mathsf{R}_{k}>\mu(E_{k}%
)\,a(n_{k}/2)\}$, where, due to slow variation of $a_{T}$, $\mu(E_{k}%
)\,a(n_{k}/2)\sim\mu(E_{k})\,a(n_{k})=t$ as $k\rightarrow\infty$. Since
$\widetilde{G}$ is continuous at $t$, we thus see that
\begin{align}
\mu_{E_{k}}(\varphi_{E_{k}}>n_{k}/2)  & =1-\widetilde{G}_{k}(\mu
(E_{k})\,a(n_{k}/2))\label{Eq_hdfgjashgfsagfjhgf}\\
& \longrightarrow1-\widetilde{G}(t)\text{ \quad as }k\rightarrow\infty
\text{.}\nonumber
\end{align}
%

%TCIMACRO{\TeXButton{VSs}{\vspace{0.1cm}}}%
%BeginExpansion
\vspace{0.1cm}%
%EndExpansion
%

%TCIMACRO{\TeXButton{NI}{\noindent}}%
%BeginExpansion
\noindent
%EndExpansion
\textbf{(iii)} By the uniform set property and $a(n_{k}/2)\sim a(n_{k})$ we
have%
\[
\sum_{l=n_{k}/2+1}^{n_{k}}\widehat{T}^{l}u=o\left(  a(n_{k})\right)
\qquad\text{as }k\rightarrow\infty\text{, uniformly mod }\mu\text{ on
}Y\text{,}%
\]
and hence, since $a(n_{k})=t/\mu(E_{k})$ and $E_{k}\subseteq Y$, we find that
\begin{align}
\sum_{l=n_{k}/2+1}^{n_{k}}\int_{E_{k}\cap\{\varphi_{E_{k}}>n_{k}-l\}}%
\widehat{T}^{l}u\,d\mu & \leq\int_{E_{k}}\sum_{l=n_{k}/2+1}^{n_{k}}\widehat
{T}^{l}u\,d\mu\label{Eq_KillTheLongTail}\\
& =\mu(E_{k})o\left(  a(n_{k})\right) \nonumber\\
& \longrightarrow0\text{ \qquad as }k\rightarrow\infty\text{.}\nonumber
\end{align}
On the other hand, since $\{\varphi_{E_{k}}>n_{k}-l\}\subseteq\{\varphi
_{E_{k}}>n_{k}/2\}$ for $l\leq n_{k}/2$, we can appeal to the uniform set
property and to (\ref{Eq_hdfgjashgfsagfjhgf}) to conclude that
\begin{align}
\sum_{l=1}^{n_{k}/2}\int_{E_{k}\cap\{\varphi_{E_{k}}>n_{k}-l\}}\widehat{T}%
^{l}u\,d\mu & \leq\int_{E_{k}\cap\{\varphi_{E_{k}}>n_{k}/2\}}\sum_{l=1}%
^{n_{k}/2}\widehat{T}^{l}u\,d\mu\nonumber\\
& \sim a(n_{k}/2)\,\mu(E_{k}\cap\{\varphi_{E_{k}}>n_{k}/2\})\nonumber\\
& \sim a(n_{k})\,\mu(E_{k}\cap\{\varphi_{E_{k}}>n_{k}/2\})\nonumber\\
& \sim t\,\mu_{E_{k}}(\varphi_{E_{k}}>n_{k}/2)\nonumber\\
& \longrightarrow t\,[1-\widetilde{G}(t)]\text{ \quad as }k\rightarrow
\infty\text{.}\label{Eq_EstimateTheShortBit1}%
\end{align}
In view of (\ref{Eq_FromDEDecomp}), (\ref{Eq_KillTheLongTail}) and
(\ref{Eq_EstimateTheShortBit1}) together give
\[
\underset{k\rightarrow\infty}{\overline{\lim}}G_{k}^{u}(t)\leq
t\,[1-\widetilde{G}(t)]\text{.}%
\]
%

%TCIMACRO{\TeXButton{NI}{\noindent}}%
%BeginExpansion
\noindent
%EndExpansion
\textbf{(iv)} To also prove $\underline{\lim}_{k\rightarrow\infty}G_{k}%
^{u}(t)\geq t\,[1-\widetilde{G}(t)]$, and hence (\ref{Eq_sjdhgfjekgfkerhfv}),
we need only observe that by arguments similar to the above,
\begin{align}
G_{k}^{u}(t)=\sum_{l=1}^{n_{k}}\int_{E_{k}\cap\{\varphi_{E_{k}}>n_{k}%
-l\}}\widehat{T}^{l}u\,d\mu & \geq\int_{E_{k}\cap\{\varphi_{E_{k}}>n_{k}%
\}}\sum_{l=1}^{n_{k}}\widehat{T}^{l}u\,d\mu\nonumber\\
& \sim a(n_{k})\,\mu(E_{k}\cap\{\varphi_{E_{k}}>n_{k}\})\nonumber\\
& \sim t\,\mu_{E_{k}}(\varphi_{E_{k}}>n_{k})\nonumber\\
& \longrightarrow t\,[1-\widetilde{G}(t)]\text{ \quad as }k\rightarrow
\infty\text{.}%
\end{align}%
%TCIMACRO{\TeXButton{NI}{\noindent}}%
%BeginExpansion
\noindent
%EndExpansion
\textbf{(v)} Conversely, if we start from the assumption that $G_{k}%
^{u}\Rightarrow G$, and want to show that $\widetilde{G}_{k}\Rightarrow
\widetilde{G}$ with $\widetilde{G}$ satisfying $G(s)=s\,[1-\widetilde
{G}\left(  s\right)  ]$, $s>0$, we can use the above by arguing as in the last
step of the proof for the $\alpha\in(0,1]$ case.%

%TCIMACRO{\TeXButton{VSs}{\vspace{0.1cm}}}%
%BeginExpansion
\vspace{0.1cm}%
%EndExpansion
%

%TCIMACRO{\TeXButton{NI}{\noindent}}%
%BeginExpansion
\noindent
%EndExpansion
\textbf{(vi)} Finally, we check that $G(t^{\prime})-G(t)\leq t^{\prime}-t$ for
$t^{\prime}>t>0$. Let $n_{k}^{\prime}:=n_{k}^{[t^{\prime}]}$ and use
(\ref{Eq_FromDEDecomp}) twice to see that
\[
G_{k}^{u}(t^{\prime})\leq G_{k}^{u}(t)+\int_{E_{k}}\sum_{l=n_{k}+1}%
^{n_{k}^{\prime}}\widehat{T}^{l}u\,d\mu\text{.}%
\]
But
\[
\int_{E_{k}}\sum_{l=n_{k}+1}^{n_{k}^{\prime}}\widehat{T}^{l}u\,d\mu\sim
\mu(E_{k})\left(  a_{T}(n_{k}^{\prime})-a_{T}(n_{k})\right)  =t^{\prime
}-t\text{ \quad as }k\rightarrow\infty\text{,}%
\]
and the desired estimate follows. This proves Lipschitz continuity of $G$.
\end{proof}%

%TCIMACRO{\TeXButton{VSs}{\vspace{0.1cm}}}%
%BeginExpansion
\vspace{0.1cm}%
%EndExpansion

Now recall the following folklore principle (see e.g. Lemma 1 of \cite{BZ}).

\begin{lemma}
[\textbf{Regular variation preserves distributional convergence}%
]\label{L_RegVarDistortion}Assume that $R_{n}$ and $R$ are random variables
taking values in $(0,\infty),$ and that $\rho_{n}^{-1}R_{n}\Longrightarrow R$
for constants $\rho_{n}\rightarrow\infty$. If $B\ $is regularly varying of
index $\beta\neq0$, then
\begin{equation}
\frac{B(R_{n})}{B(\rho_{n})}\Longrightarrow R^{\beta}\text{.}%
\end{equation}

\end{lemma}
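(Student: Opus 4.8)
The plan is to reduce the assertion to the continuous-mapping theorem after rewriting $B$ via the uniform convergence theorem for regularly varying functions. First I would write $B(s) = s^{\beta} L(s)$ for $s$ large, where $L$ is slowly varying; then $B(R_n)/B(\rho_n) = (R_n/\rho_n)^{\beta}\cdot L(R_n)/L(\rho_n)$. The first factor converges in distribution to $R^{\beta}$ by the continuous-mapping theorem, since $x \mapsto x^{\beta}$ is continuous on $(0,\infty)$ and $\rho_n^{-1} R_n \Rightarrow R$ with $R$ taking values in $(0,\infty)$. So everything comes down to showing that the second factor $L(R_n)/L(\rho_n)$ converges to $1$ in probability.

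For that step I would exploit that $R_n = \rho_n \cdot (\rho_n^{-1} R_n)$ where $\rho_n^{-1} R_n$ is a tight sequence of random variables bounded away from $0$ in probability (because $R > 0$ a.s.). Fix $\varepsilon > 0$; by tightness and $\Pr[R = 0] = 0$ there is a compact interval $[c^{-1},c] \subset (0,\infty)$ with $\Pr[\rho_n^{-1} R_n \in [c^{-1},c]] > 1-\varepsilon$ for all large $n$. On the event $\{\rho_n^{-1} R_n \in [c^{-1},c]\}$ we have $L(R_n)/L(\rho_n) = L(u \rho_n)/L(\rho_n)$ for some $u = u(\omega) \in [c^{-1},c]$, and the uniform convergence theorem for slowly varying functions (\cite{BGT}, Thm.\ 1.2.1) gives $\sup_{u \in [c^{-1},c]} |L(u\rho_n)/L(\rho_n) - 1| \to 0$ as $n \to \infty$ (using $\rho_n \to \infty$). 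Hence, off an event of probability at most $\varepsilon$, the ratio $L(R_n)/L(\rho_n)$ lies within any prescribed window around $1$ eventually; since $\varepsilon$ was arbitrary this yields $L(R_n)/L(\rho_n) \to 1$ in probability.

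Finally I would combine the two factors: by Slutsky's theorem, if $X_n \Rightarrow X$ and $Y_n \to 1$ in probability then $X_n Y_n \Rightarrow X$. Applying this with $X_n = (\rho_n^{-1} R_n)^{\beta} \Rightarrow R^{\beta}$ and $Y_n = L(R_n)/L(\rho_n) \to 1$ gives $B(R_n)/B(\rho_n) \Rightarrow R^{\beta}$, as claimed. The main (mild) obstacle is the middle step: one must be careful that the argument of $L$ is a \emph{random} multiple of $\rho_n$, so a pointwise statement about $L(u\rho_n)/L(\rho_n)$ is not enough — one genuinely needs the \emph{locally uniform} convergence of the slowly varying part on compact subsets of $(0,\infty)$, together with the tightness of $(\rho_n^{-1} R_n)$ away from $0$ and $\infty$ to confine the random argument to such a compact set with high probability.
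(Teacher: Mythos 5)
Your proof is correct, and it is the standard ``folklore'' argument: factor $B(s)=s^{\beta}L(s)$ with $L$ slowly varying, handle the power part by the continuous mapping theorem, handle the slowly varying part by confining $\rho_{n}^{-1}R_{n}$ to a compact sub-interval of $(0,\infty)$ with high probability (using $R\in(0,\infty)$ a.s.) so that Karamata's uniform convergence theorem gives $L(R_{n})/L(\rho_{n})\to1$ in probability, and then apply Slutsky. Note that the paper itself does not supply a proof of this lemma but merely cites it as a folklore principle (Lemma~1 of \cite{BZ}), so there is no in-paper argument to compare against; your write-up is exactly the kind of proof one would expect behind that citation, and you are right to flag that the genuinely needed ingredient is the \emph{locally uniform} (not merely pointwise) convergence of $L(u\rho_{n})/L(\rho_{n})$ as $\rho_{n}\to\infty$, since the multiplier $u$ is random.
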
%

%TCIMACRO{\TeXButton{VSs}{\vspace{0.1cm}}}%
%BeginExpansion
\vspace{0.1cm}%
%EndExpansion

This easily leads to

\begin{proof}
[\textbf{Proof of Theorem \ref{T_Ret_versus_Hit}.}]Fix $\alpha\in(0,1]$.
Applying Lemma \ref{L_RegVarDistortion} proves that (\ref{Eq_AbstrDistrCgeII})
is equivalent to (\ref{Eq_AbstrDistrCgeI}) with $\widetilde{G}(t)=\widetilde
{F}(t^{1/\alpha})$, while (\ref{Eq_AbstrDistrCgeIII}) is equivalent to
(\ref{Eq_AbstrDistrCgeIIIa}) with $G(t)=F(t^{1/\alpha})$. This relation
between $(\widetilde{G},G)$ and $(\widetilde{F},F)$ turns
(\ref{Eq_MagicFormula2}) into
\[
F(t)=t^{\alpha}\int_{0}^{1}[1-\widetilde{F}\left(  t\,(1-r)\right)  ]\,\alpha
r^{\alpha-1}\,dr
\]
which, after an obvious change of variables, becomes (\ref{Eq_MagicFormula1}).
\end{proof}%

%TCIMACRO{\TeXButton{VSs}{\vspace{0.1cm}}}%
%BeginExpansion
\vspace{0.1cm}%
%EndExpansion

We can now establish the main positive result of Section \ref{Sec_Scales}.

\begin{proof}
[\textbf{Proof of Theorem \ref{T_TightnessInsideDK}.}]\textbf{(i)} We first
show that $\gamma_{T}$ is a tight scale for return times in $Y$. Assume
otherwise, then there is some $\delta>0$ and a sequence $(E_{k})$ in
$Y\cap\mathcal{A}$, $\mu(E_{k})>0$ such that $\mu_{E_{k}}(\gamma_{T}(\mu
(E_{k}))\cdot\varphi_{E_{k}}>k)>\delta$ for $k\geq1$. In view of Proposition
\ref{P_NoTightnesssLargeSets} b), this sequence must satisfy $\mu
(E_{k})\rightarrow0$.

Now Helly's selection theorem guarantees that there exist indices
$k_{j}\nearrow\infty$ s.t.
\[
\mu_{E_{k_{j}}}(\gamma_{T}(\mu(E_{k_{j}}))\,\varphi_{E_{k_{j}}}\leq
t)\Longrightarrow\widetilde{F}(t)\text{ \quad as }j\rightarrow\infty\text{,}%
\]
for some $\widetilde{F}\in\mathcal{F}$. By our choice of $(E_{k})$, the latter
satisfies $\sup_{t\in\lbrack0,\infty)}\widetilde{F}(t)<1-\delta$. But this
contradicts Remark \ref{Rem_Comments1} c).%
%TCIMACRO{\TeXButton{NL}{\newline}}%
%BeginExpansion
\newline
%EndExpansion
%

%TCIMACRO{\TeXButton{NI}{\noindent}}%
%BeginExpansion
\noindent
%EndExpansion
\textbf{(ii)} To prove that $\gamma_{T}$ is a nontrivial scale for return
times in $Y$, we will construct a sequence $(E_{k})$ in $Y\cap\mathcal{A}$
with $0<\mu(E_{k})\rightarrow0$ for which there are $t^{\ast},k^{\ast}>0$
s.t.
\begin{equation}
\mu_{Y}(\gamma_{T}(\mu(E_{k}))\,\varphi_{E_{k}}\leq t^{\ast})>1/3\text{ \quad
for }k\geq k^{\ast}\text{.}\label{Eq_cbxvuzsfguazcuav}%
\end{equation}
Helly's selection theorem then provides us with $k_{j}\nearrow\infty$ and
$F,\widetilde{F}\in\mathcal{F}$ such that $\mu_{Y}(\gamma_{T}(\mu(E_{k_{j}%
}))\,\varphi_{E_{k_{j}}}\leq t)\Longrightarrow F(t)$ and $\mu_{E_{k_{j}}%
}(\gamma_{T}(\mu(E_{k_{j}}))\,\varphi_{E_{k_{j}}}\leq t)\Longrightarrow
\widetilde{F}(t)$ as $j\rightarrow\infty$. Due to (\ref{Eq_cbxvuzsfguazcuav}),
it is clear that $F(t^{\ast})\geq1/3$ (use the Portmanteau Theorem). In view
of (\ref{Eq_MagicFormula1}) from Theorem \ref{T_Ret_versus_Hit}, this implies
that there is some $s^{\ast}\in(0,t^{\ast})$ for which $1-\widetilde
{F}(s^{\ast})>0$. But then
\begin{equation}
\underline{\lim}_{j\rightarrow\infty}\,\mu_{E_{k_{j}}}(\gamma_{T}(\mu
(E_{k_{j}}))\,\varphi_{E_{k_{j}}}>s^{\ast})>0\text{,}%
\end{equation}
confirming that $\gamma_{T}$ is a nontrivial scale for return times in $Y$.%
%TCIMACRO{\TeXButton{NL}{\newline}}%
%BeginExpansion
\newline
%EndExpansion
%

%TCIMACRO{\TeXButton{NI}{\noindent}}%
%BeginExpansion
\noindent
%EndExpansion
\textbf{(iii)} To find $(E_{k})$ satisfying (\ref{Eq_cbxvuzsfguazcuav}), we
combine two results regarding the first-return map $T_{Y}:Y\rightarrow Y$. The
first of these only uses the fact that $T_{Y}$ is an ergodic m.p.t. on the
probability space $(Y,Y\cap\mathcal{A},\mu_{Y})$, and that this space has no
atoms. To check the latter property, note that a space with atoms supporting
an ergodic m.p.t. must be purely atomic with a finite number of atoms. But if
the measure is supported on finitely many atoms, the space cannot support a
non-integrable almost surely finite function $\varphi_{Y}$.

Due to these basic properties, $T_{Y}$ admits a sequence $(E_{k})$ in
$Y\cap\mathcal{A}$ for which $\mu_{Y}(E_{k})\rightarrow0$ and
\begin{equation}
\mu_{Y}(\mu_{Y}(E_{k})\,\varphi_{E_{k}}^{Y}\leq t)\Longrightarrow
1-e^{-t}\text{ \quad as }k\rightarrow\infty\text{,}%
\end{equation}
see \cite{KL}. In particular, fixing some $\theta>0$ so large that
$1-e^{-\theta}>2/3$, we see that the sets $V_{k}:=Y\cap\{\varphi_{E_{k}}%
^{Y}\leq\theta/\mu_{Y}(E_{k})\}$, $k\geq1$, satisfy $\mu_{Y}(V_{k})>2/3$ for
$k\geq k^{\prime}$.%
%TCIMACRO{\TeXButton{NL}{\newline}}%
%BeginExpansion
\newline
%EndExpansion
%

%TCIMACRO{\TeXButton{NI}{\noindent}}%
%BeginExpansion
\noindent
%EndExpansion
\textbf{(iv)} The second ingredient is Aaronson's Darling-Kac limit theorem
(Corollary 3.7.3 of \cite{A0}, see also \cite{A1}), which applies under the
present assumptions. This result is equivalent to a stable limit theorem for
the return time function $\varphi_{Y}$ under $T_{Y}$. The latter result
asserts that
\begin{equation}
\mu_{Y}\left(  \frac{1}{b_{T}(m)}%
%TCIMACRO{\tsum _{j=0}^{m-1}}%
%BeginExpansion
{\textstyle\sum_{j=0}^{m-1}}
%EndExpansion
\varphi_{Y}\circ T_{Y}^{j}\leq t\right)  \Longrightarrow\Pr[\Gamma
(1+\alpha)^{-1/\alpha}\,\mathcal{G}_{\alpha}\leq t]\text{ \quad as
}k\rightarrow\infty\text{.}%
\end{equation}
Pick $\rho>0$ so large that $\Pr[\Gamma(1+\alpha)^{-1/\alpha}\,\mathcal{G}%
_{\alpha}\leq\rho]>2/3$. Then there is some $m^{\prime}$ such that the sets
$W_{m}:=Y\cap\{b_{T}(m)^{-1}%
%TCIMACRO{\tsum _{j=0}^{m-1}}%
%BeginExpansion
{\textstyle\sum_{j=0}^{m-1}}
%EndExpansion
\varphi_{Y}\circ T_{Y}^{j}\leq\rho\}$ satisfy $\mu_{Y}(W_{m})>2/3$ for $m\geq
m^{\prime}$.%
%TCIMACRO{\TeXButton{NL}{\newline}}%
%BeginExpansion
\newline
%EndExpansion
%

%TCIMACRO{\TeXButton{NI}{\noindent}}%
%BeginExpansion
\noindent
%EndExpansion
\textbf{(v)} Since $b_{T}\in\mathcal{R}_{1/\alpha}$ and $\mu(E_{k}%
)\rightarrow0$, we have $b_{T}(\left\lfloor \theta/\mu_{Y}(E_{k})\right\rfloor
)/b_{T}(1/\mu_{Y}(E_{k}))<2\theta^{1/\alpha}$ for $k\geq k^{\prime\prime}$.
Recalling (\ref{Eq_sdhfgjsdhgfjshdfjsafjlsagfhfgjsdhfjdshf}) we therefore see
that
\begin{align*}
\gamma_{T}(\mu(E_{k}))\,\varphi_{E_{k}}  & \leq\gamma_{T}(\mu(E_{k}%
))\,\sum_{j=0}^{\left\lfloor \theta/\mu_{Y}(E_{k})\right\rfloor -1}\varphi
_{Y}\circ T_{Y}^{j}\text{ \quad on }V_{k}\\
& =\frac{b_{T}(\left\lfloor \theta/\mu_{Y}(E_{k})\right\rfloor )}{b_{T}%
(1/\mu_{Y}(E_{k}))}\,\frac{1}{b_{T}(\left\lfloor \theta/\mu_{Y}(E_{k}%
)\right\rfloor )}\sum_{j=0}^{\left\lfloor \theta/\mu_{Y}(E_{k})\right\rfloor
-1}\varphi_{Y}\circ T_{Y}^{j}\text{ }\\
& <2\theta^{1/\alpha}\rho=:t^{\ast}\text{ \ \quad on }W_{\left\lfloor
\theta/\mu_{Y}(E_{k})\right\rfloor }\text{ for }k\geq k^{\prime\prime}\text{.}%
\end{align*}
Hence,
\[
\mu_{Y}(\gamma_{T}(\mu(E_{k}))\,\varphi_{E_{k}}\leq t^{\ast})\geq\mu_{Y}%
(V_{k}\cap W_{\left\lfloor \theta/\mu_{Y}(E_{k})\right\rfloor })\text{ \quad
for }k\geq k^{\prime\prime}\text{.}%
\]
But there is some $k^{\prime\prime\prime}$ such that $\left\lfloor \theta
/\mu_{Y}(E_{k})\right\rfloor \geq m^{\prime}$ for $k\geq k^{\prime\prime
\prime}$, and then
\[
\mu_{Y}(V_{k}\cap W_{\left\lfloor \theta/\mu_{Y}(E_{k})\right\rfloor
})>1/3\text{ \quad for }k\geq k^{\ast}:=k^{\prime}\vee k^{\prime\prime}\vee
k^{\prime\prime\prime}\text{,}%
\]
which shows that $(E_{k})$ satisfies (\ref{Eq_cbxvuzsfguazcuav}).
\end{proof}%

%TCIMACRO{\TeXButton{VSs}{\vspace{0.1cm}}}%
%BeginExpansion
\vspace{0.1cm}%
%EndExpansion

Finally, we turn to the

\begin{proof}
[\textbf{Proof of Theorem \ref{P_NonlinTightScale}}]\textbf{(i)} As a
consequence of Proposition \ref{P_NoTightnesssLargeSets} b) we see that
whenever $a$ is a scaling function, then for ever $Y\in\mathcal{A}$ with
$0<\mu(Y)<\infty$ and any $\eta>0$, $\left\{  \text{\textsf{law}}_{\mu_{E}%
}[a(\varphi_{E})]:E\in Y\cap\mathcal{A}\text{, }\mu(E)\geq\eta\right\}  $ is
tight.\newline\newline\textbf{(ii)} Now take $Y$ and $a=a_{T}$ as in the
statement of the proposition. Assume for a contradiction that
(\ref{Eq_NonlinTightScale}) fails. Then there is some $\delta>0$ and a
sequence $(E_{k})$ in $Y\cap\mathcal{A}$ such that $\mu_{E_{k}}[\mu
(E_{k})\,a_{T}(\varphi_{E_{k}})>k]>\delta$ for $k\geq1$. By step (i) we
necessarily have $\mu(E_{k})\rightarrow0$. The Helly selection theorem
provides us with $k_{j}\nearrow\infty$ and $\widetilde{G}\in\mathcal{F}$ for
which
\begin{equation}
\mu_{E_{k_{j}}}(\mu(E_{k_{j}})\,a_{T}(\varphi_{E_{k_{j}}})\leq
t)\Longrightarrow\widetilde{G}(t)\text{ \quad as }j\rightarrow\infty
\text{.}\label{Eq_sysyasyas}%
\end{equation}
According to Theorem \ref{T_Ret_versus_Hit2} and Remark \ref{Rem_Comments2}%
\ b), however, $\widetilde{G}$ has to be a probability distribution function
on $[0,\infty)$. This contradicts our choice of $(E_{k})$.\newline%
\newline\textbf{(iii)} To show that this nonlinear scaling is nontrivial,
consider $E_{k}^{\ast}:=Y\cap\{\varphi_{Y}>k\}$, $k\geq1$. Then, $\mu
(E_{k}^{\ast})=\mu(Y)q_{k}(Y)$, and $\varphi_{E_{k}^{\ast}}\geq\varphi_{Y}>k$
on $E_{k}^{\ast}$. Consequently,
\begin{equation}
\mu(E_{k}^{\ast})\,a_{T}(\varphi_{E_{k}^{\ast}})>\mu(Y)q_{k}(Y)a_{T}(k)\text{
\quad on }E_{k}^{\ast}\text{.}\label{Eq_nvbnxvbnb}%
\end{equation}
But in the $\alpha=0$ case, the asymptotic renewal equation
(\ref{Eq_AsyRenewalEqn}) and the (monotone density part of) Karamata's
Tauberian Theorem yield%
\begin{equation}
a_{T}(k)\sim\frac{k}{w_{k}(Y)}\sim\frac{1}{\mu(Y)q_{k}(Y)}\text{ \quad as
}k\rightarrow\infty\text{.}\label{Eq_nbxcvnysvcsydvcjgd}%
\end{equation}
Together with (\ref{Eq_nvbnxvbnb}) this shows that any limit point
$\widetilde{G}$ as in (\ref{Eq_sysyasyas})\ of the (nonlinearly rescaled)
return-time distributons, vanishes on $[0,1)$. In view of Theorem
\ref{T_Ret_versus_Hit2}, this means that $\mu_{Y}(\mu(E_{k_{j}}^{\ast}%
)\,a_{T}(\varphi_{E_{k_{j}}^{\ast}})\leq t)\Longrightarrow G(t)$ with $G(t)=t$
for $t\in\lbrack0,1]$, and hence (since $G\in\mathcal{F}$) that $G(t)=t\wedge
1$ for $t\geq0$. Appealing to (\ref{Eq_MagicFormula2}) once again, we obtain
the explicit form (\ref{Eq_TheExtremeAlpha0Limit}) of $\widetilde{G}$.

This uniqueness of the limit point $\widetilde{G}$, together with Helly's
theorem implies convergence along the full sequence.
\end{proof}%

%TCIMACRO{\TeXButton{VSs}{\vspace{0.1cm}}}%
%BeginExpansion
\vspace{0.1cm}%
%EndExpansion

\section{Proving convergence to $H_{\alpha}$ in uniform
sets\label{Sec_LimitThmAlphaPositive}}%

%TCIMACRO{\TeXButton{NI}{\noindent}}%
%BeginExpansion
\noindent
%EndExpansion
\textbf{Characterizing convergence to }$H_{\alpha}$\textbf{.} In the case of
finite measure preserving systems, Theorem HLV is not only of interest in its
own right, but it is also the basis of a method for proving convergence to the
exponential distribution (see \cite{HSV}). Indeed, it is clear that the only
$F\in\mathcal{F}$ which satisfies
\[
F(t)=\int_{0}^{t}[1-F(s)]\,ds\text{ \quad for }t\geq0\text{,}%
\]
is $F=H_{1}$, where $H_{1}(t):=1-e^{-t}$, $t>0$. The most prominent limit law
is thus characterized as the unique distribution which can appear both as
return- and as hitting-time limit for the same sequence of sets. In view of
this and the Helly selection principle, one can prove convergence to
$\mathcal{E}$ of both return- and hitting-time distributions by showing that
the two types of distributions are asymptotically the same.

Here, we obtain (with hardly any effort) a result which allows a parallel
approach to proving convergence to $H_{\alpha}$ inside uniform sets in case
$a_{T}\in\mathcal{R}_{\alpha}$, $\alpha\in(0,1]$. The following is contained
in Lemma 7 of \cite{PSZ}\footnote{Note that the parameter $\alpha$ appearing
in Lemma 7 of \cite{PSZ} is not the same as our $\alpha$. In the notation of
the present paper it equals $1/(1-\alpha)$.\newline}.

\begin{lemma}
[\textbf{Characterization of} $H_{\alpha}$]\label{L_CharHalpha}For every
$\alpha\in(0,1]$ the distribution function $F=H_{\alpha}$ of $\mathcal{H}%
_{\alpha}$ is the unique element of $\mathcal{F}$ which satisfies%
\begin{equation}
F(t)=\int_{0}^{t}[1-F(s)]\,\alpha\left(  t-s\right)  ^{\alpha-1}ds\text{ \quad
for }t\geq0\text{.}%
\end{equation}

\end{lemma}

As a consequence, we obtain

\begin{theorem}
[\textbf{Characterizing convergence to }$H_{\alpha}$ \textbf{in uniform sets}%
]\label{T_Cge_to_H_alpha}Let $T$ be a c.e.m.p.t. on $(X,\mathcal{A},\mu)$,
$\mu(X)=\infty$. Assume that $Y\in\mathcal{A}$ is a uniform set and that
$a_{T}\in\mathcal{R}_{\alpha}$ for some $\alpha\in(0,1]$.

Suppose that $E_{k}\subseteq Y$, $k\geq1$, are sets of positive measure with
$\mu(E_{k})\rightarrow0$. Then the normalized return-time distributions of the
$E_{k}$ converge to $H_{\alpha}$,
\begin{equation}
\mu_{E_{k}}(\gamma_{T}(\mu(E_{k}))\,\varphi_{E_{k}}\leq t)\Longrightarrow
H_{\alpha}(t)\text{ \quad as }k\rightarrow\infty\text{,}%
\label{Eq_Ret_Cge_H_alpha}%
\end{equation}
if and only if the normalized hitting-time distributions converge to
$H_{\alpha}$,
\begin{equation}
\mu_{Y}(\gamma_{T}(\mu(E_{k}))\,\varphi_{E_{k}}\leq t)\Longrightarrow
H_{\alpha}(t)\text{ \quad as }k\rightarrow\infty\text{,}%
\label{Eq_Hit_Cge_H_alpha}%
\end{equation}
if and only if for a dense set of points $t\ $in $(0,\infty)$,
\begin{equation}
\mu_{E_{k}}(\gamma_{T}(\mu(E_{k}))\,\varphi_{E_{k}}\leq t)-\mu_{Y}(\gamma
_{T}(\mu(E_{k}))\,\varphi_{E_{k}}\leq t)\longrightarrow0\text{ \quad as
}k\rightarrow\infty\text{.}\label{Eq_Same_Law_For_Ret_and_Hit}%
\end{equation}

\end{theorem}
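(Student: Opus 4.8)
The plan is to deduce Theorem \ref{T_Cge_to_H_alpha} from Theorem \ref{T_Ret_versus_Hit} together with the characterization of $H_\alpha$ recorded in Remark \ref{Rem_Comments1} f). First I would establish the equivalence of \eqref{Eq_Ret_Cge_H_alpha} and \eqref{Eq_Hit_Cge_H_alpha}. Suppose \eqref{Eq_Ret_Cge_H_alpha} holds, i.e.\ $\widetilde F = H_\alpha$ in the notation of Theorem \ref{T_Ret_versus_Hit}. By that theorem, the hitting-time distributions then converge to the $F\in\mathcal F$ given by \eqref{Eq_MagicFormula1}, namely $F(t)=\int_0^t[1-H_\alpha(s)]\,\alpha(t-s)^{\alpha-1}\,ds$. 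Invoking Remark \ref{Rem_Comments1} f) (equivalently Lemma 7 of \cite{PSZ}), the fixed-point equation $F(t)=\int_0^t[1-F(s)]\,\alpha(t-s)^{\alpha-1}\,ds$ has the unique solution $F=H_\alpha$; since $H_\alpha$ solves this equation, plugging $\widetilde F=H_\alpha$ into \eqref{Eq_MagicFormula1} gives $F=H_\alpha$. Conversely, if \eqref{Eq_Hit_Cge_H_alpha} holds, then by the ``only if'' direction of Theorem \ref{T_Ret_versus_Hit} (which gives convergence of the return-time distributions to \emph{some} $\widetilde F$) we get $\widetilde F$ satisfying $H_\alpha(t)=\int_0^t[1-\widetilde F(s)]\,\alpha(t-s)^{\alpha-1}\,ds$; but $\widetilde F=H_\alpha$ also solves this, and by Remark \ref{Rem_Comments1} b) ($\widetilde F$ is uniquely determined by $F$) we conclude $\widetilde F=H_\alpha$. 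This settles \eqref{Eq_Ret_Cge_H_alpha} $\Leftrightarrow$ \eqref{Eq_Hit_Cge_H_alpha}.

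Next I would bring in the third condition \eqref{Eq_Same_Law_For_Ret_and_Hit}. Writing $\widetilde F_k(t):=\mu_{E_k}(\gamma_T(\mu(E_k))\varphi_{E_k}\le t)$ and $F_k(t):=\mu_Y(\gamma_T(\mu(E_k))\varphi_{E_k}\le t)$, condition \eqref{Eq_Same_Law_For_Ret_and_Hit} says $F_k(t)-\widetilde F_k(t)\to 0$ for $t$ in a dense subset of $(0,\infty)$. If both \eqref{Eq_Ret_Cge_H_alpha} and \eqref{Eq_Hit_Cge_H_alpha} hold, then $\widetilde F_k\Rightarrow H_\alpha$ and $F_k\Rightarrow H_\alpha$, and since $H_\alpha$ is continuous on $[0,\infty)$ (Remark \ref{Rem_Comments1} c)) the convergence is pointwise everywhere, so $F_k(t)-\widetilde F_k(t)\to 0$ for \emph{every} $t\ge 0$; in particular \eqref{Eq_Same_Law_For_Ret_and_Hit} holds. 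For the reverse implication, assume \eqref{Eq_Same_Law_For_Ret_and_Hit}. By Theorem \ref{T_TightnessInsideDK} the family $\{\widetilde F_k\}$ is tight, so by Helly's selection theorem every subsequence has a further subsequence $(k_j)$ along which $\widetilde F_{k_j}\Rightarrow\widetilde F$ for some $\widetilde F\in\mathcal F$. By Theorem \ref{T_Ret_versus_Hit} then $F_{k_j}\Rightarrow F$ with $(\widetilde F,F)$ related by \eqref{Eq_MagicFormula1}, and by Remark \ref{Rem_Comments1} c) the function $\widetilde F$ is automatically a \emph{probability} distribution function. Now \eqref{Eq_Same_Law_For_Ret_and_Hit} forces $F(t)=\widetilde F(t)$ at all common continuity points (a dense set), hence $F=\widetilde F$ as elements of $\mathcal F$; thus $\widetilde F$ satisfies $\widetilde F(t)=\int_0^t[1-\widetilde F(s)]\,\alpha(t-s)^{\alpha-1}\,ds$, and by Remark \ref{Rem_Comments1} f) this means $\widetilde F=H_\alpha$. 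Since the subsequential limit is always $H_\alpha$, the whole sequence converges: $\widetilde F_k\Rightarrow H_\alpha$, i.e.\ \eqref{Eq_Ret_Cge_H_alpha}, and therefore \eqref{Eq_Hit_Cge_H_alpha} as well by the first part.

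The argument is essentially a soft packaging of previously established facts, so I do not anticipate a genuine obstacle; the one point requiring a little care is the subsequence extraction in the last implication. One must make sure the tightness input (Theorem \ref{T_TightnessInsideDK}, which applies precisely because $E_k\subseteq Y$ with $\mu(E_k)\to 0$ inside a DK-set with $a_T\in\mathcal R_\alpha$) is in force so that Helly applies to $\{\widetilde F_k\}$, and then that the identification of \emph{every} subsequential limit with the \emph{same} $H_\alpha$ is what upgrades subsequential convergence to full convergence. It is also worth noting explicitly, when passing from ``dense set of $t$'' in \eqref{Eq_Same_Law_For_Ret_and_Hit} to equality of the limit functions, that two right-continuous monotone functions agreeing on a dense set agree everywhere, so $F=\widetilde F$ genuinely holds in $\mathcal F$ rather than merely on a dense set. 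With these points in place the three-way equivalence closes up.
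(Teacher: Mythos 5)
Your argument is correct and follows essentially the same route as the paper's: equate Ret and Hit convergence to $H_\alpha$ by feeding Remark \ref{Rem_Comments1} f) (and b)) into Theorem \ref{T_Ret_versus_Hit}, then use Helly selection plus \eqref{Eq_MagicFormula1} and Remark \ref{Rem_Comments1} f) to show \eqref{Eq_Same_Law_For_Ret_and_Hit} forces every subsequential limit to be $H_\alpha$. The only cosmetic differences are that the paper states the converse as a proof by contradiction rather than a subsequence-in-subsequence identification, and does not invoke Theorem \ref{T_TightnessInsideDK} (Helly already yields a subsequential vague limit in $\mathcal{F}$ without tightness, and fullness of $\widetilde F$ then comes for free from Remark \ref{Rem_Comments1} c), as you also note).
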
%

%TCIMACRO{\TeXButton{VSs}{\vspace{0.2cm}}}%
%BeginExpansion
\vspace{0.2cm}%
%EndExpansion

\begin{proof}
By Theorem \ref{T_Ret_versus_Hit} it is clear that (\ref{Eq_Ret_Cge_H_alpha})
is equivalent to (\ref{Eq_Hit_Cge_H_alpha}). Trivially, either of these
statements therefore implies (\ref{Eq_Same_Law_For_Ret_and_Hit}).

To prove the converse, start from (\ref{Eq_Same_Law_For_Ret_and_Hit}), and
assume for a contradiction that, say, (\ref{Eq_Ret_Cge_H_alpha}) fails, so
that by Helly's selection principle there is a subsequence $k_{j}%
\nearrow\infty$ of indices and some $\widetilde{F}\in\mathcal{F}$,
$\widetilde{F}\neq H_{\alpha}$, such that (\ref{Eq_AbstrDistrCgeII}) holds
along that subsequence. By Theorem \ref{T_Ret_versus_Hit}, so does
(\ref{Eq_AbstrDistrCgeIII}), where $F$ and $\widetilde{F}$ are related by
(\ref{Eq_MagicFormula1}). But (\ref{Eq_Same_Law_For_Ret_and_Hit}) ensures that
$\widetilde{F}=F$, which in view of Lemma \ref{L_CharHalpha} contradicts
$\widetilde{F}\neq H_{\alpha}$.

Exactly the same argument works if we assume that (\ref{Eq_Hit_Cge_H_alpha}) fails.
\end{proof}%

%TCIMACRO{\TeXButton{VSs}{\vspace{0.2cm}}}%
%BeginExpansion
\vspace{0.2cm}%
%EndExpansion
%

%TCIMACRO{\TeXButton{NI}{\noindent}}%
%BeginExpansion
\noindent
%EndExpansion
\textbf{Sufficient conditions for convergence to }$H_{\alpha}$\textbf{.} It is
natural to review the abstract distributional limit theorem of \cite{PSZ2},
which gives sufficient conditions for convergence to $\mathcal{H}_{\alpha} $,
in the light of the preceding result. We establish an improved limit theorem
which is similar in spirit, but uses easier assumptions.

The key idea goes back to \cite{Z7}, \cite{Z6}: Ergodicity ensures that the
asymptotics of distributions of variables $R_{k}$ which are asymptotically
invariant in measure does not depend on the choice of the initial density, see
Lemma \ref{L_StrongDCge}. In fact, there is uniform control over important
quantities as long as all densities involved belong to a sufficiently small
family. We shall use the following (see statement (3.1) of \cite{Z6}).

\begin{remark}
[\textbf{Dual ergodic sums for compact sets of densities}]\label{R_UnifMETH}%
Let $T$ be a c.e.m.p.t. on $(X,\mathcal{A},\mu)$, and $\mathcal{U}\subseteq
L_{1}(\mu)$ a (strongly) compact set of probablity densities. Then
\begin{equation}
\left\Vert \frac{1}{M}%
%TCIMACRO{\tsum _{j=0}^{M-1}}%
%BeginExpansion
{\textstyle\sum_{j=0}^{M-1}}
%EndExpansion
\widehat{T}^{j}u-\frac{1}{M}%
%TCIMACRO{\tsum _{j=0}^{M-1}}%
%BeginExpansion
{\textstyle\sum_{j=0}^{M-1}}
%EndExpansion
\widehat{T}^{j}u^{\ast}\right\Vert _{1}\longrightarrow0\text{ \quad}%
\begin{array}
[c]{c}%
\text{as }M\rightarrow\infty\text{,}\\
\text{uniformly in }u,u^{\ast}\in\mathcal{U}\text{.}%
\end{array}
\label{Eq_uououououo}%
\end{equation}

\end{remark}

This will entail a uniform (in the initial density) version of Lemma
\ref{L_StrongDCge}. We can use this to replace the fixed measure $\mu_{Y}$ in
the criterion (\ref{Eq_Same_Law_For_Ret_and_Hit}) of the previous theorem by
any sequence of probabilities $Q_{k}$ as long as their respective densities
$v_{k}$ stay in some definite compact set $\mathcal{U}$. Suppose this holds
for the particular case where each $Q_{k}$ is just a push-forward of the
conditional measure $\mu_{E_{k}}$ by a comparatively small (condition
(\ref{Eq_SmallDelay}) below) number $\tau$ of steps. Then, by virtue of
(\ref{Eq_uououououo}) those will exhibit the same asymptotics as the
$\mu_{E_{k}} $ provided that the exceptional set $\{\varphi_{E_{k}}<\tau\}$ is
not too large (condition (\ref{Eq_BoringBeforeTau})).

This is the core of the argument to follow. To get a flexible result, we allow
for delays $\tau$ which depend on the point. Given a measurable $\tau
:E\rightarrow\mathbb{N}_{0}$ we can define $T^{\tau}:E\rightarrow X$ by
$T^{\tau}x:=T^{\tau(x)}x$, which gives a null-preserving map. For $u\in
L_{1}(\mu)$ supported on $E$, let
\begin{equation}
\widehat{T^{\tau}}u:=%
%TCIMACRO{\tsum \nolimits_{j\geq0}}%
%BeginExpansion
{\textstyle\sum\nolimits_{j\geq0}}
%EndExpansion
\widehat{T^{j}}(1_{E\cap\{\tau=j\}}u)\text{,}\label{Eq_vcycvyvcvycvshgdv}%
\end{equation}
then $\widehat{T^{\tau}}$ is the transfer operator of $T^{\tau}$, describing
the push-forward of measures by $T^{\tau}$ on the level of densities,
\begin{equation}
\int_{X}f\cdot\,\widehat{T^{\tau}}u\,d\mu=\int_{E}(f\circ T^{\tau})\cdot
u\,d\mu\text{ \quad for }u\in L_{1}(\mu)\text{ and }f\in L_{\infty}%
(\mu)\text{.}%
\end{equation}
Using such an auxiliary map $T^{\tau}$, we can formulate the advertised result.

\begin{theorem}
[\textbf{Sufficient conditions for convergence to }$H_{\alpha}$]%
\label{T_SuffForCgeToHAlpha}Let $T$ be a c.e.m.p.t. on $(X,\mathcal{A},\mu)$,
$\mu(X)=\infty$, pointwise dual ergodic with $a_{T}\in\mathcal{R}_{\alpha}$
for some $\alpha\in(0,1]$. Suppose that $Y$ is a uniform set, and that
$E_{k}\subseteq Y$, $k\geq1$, are sets of positive measure with $\mu
(E_{k})\rightarrow0$.

Assume that there are measurable functions $\tau_{k}:E_{k}\rightarrow
\mathbb{N}_{0}$ and an $L_{1}(\mu)$-compact set $\mathcal{U}\subseteq
\mathcal{D}(\mu)$ of probability densities for which
\begin{equation}
\gamma_{T}(\mu(E_{k}))\,\tau_{k}\overset{\mu_{E_{k}}}{\longrightarrow}0\text{
\quad as }k\rightarrow\infty\text{,}\label{Eq_SmallDelay}%
\end{equation}
(in that $\mu_{E_{k}}[\gamma_{T}(\mu(E_{k}))\,\tau_{k}>\varepsilon
]\rightarrow0$ for every $\varepsilon>0$), and
\begin{equation}
\mu_{E_{k}}(\varphi_{E_{k}}<\tau_{k})\longrightarrow0\text{ \quad as
}k\rightarrow\infty\text{,}\label{Eq_BoringBeforeTau}%
\end{equation}
while
\begin{equation}
\widehat{T^{\tau_{k}}}(\mu(E_{k})^{-1}1_{E_{k}})\in\mathcal{U}\text{ \quad for
}k\geq1\text{.}\label{Eq_TheCompactnessAssm}%
\end{equation}
Then the return-time distributions of the $E_{k}$ converge to $H_{\alpha}$,
\begin{equation}
\mu_{E_{k}}(\gamma_{T}(\mu(E_{k}))\,\varphi_{E_{k}}\leq t)\Longrightarrow
H_{\alpha}(t)\text{ \quad as }k\rightarrow\infty\text{,}%
\label{Eq_jsdfsahdfasd}%
\end{equation}
and so do the hitting-time distributions,
\begin{equation}
\mu_{Y}(\gamma_{T}(\mu(E_{k}))\,\varphi_{E_{k}}\leq t)\Longrightarrow
H_{\alpha}(t)\text{ \quad as }k\rightarrow\infty\text{.}%
\label{Eq_bvczwtefuwzguf}%
\end{equation}

\end{theorem}%

%TCIMACRO{\TeXButton{VSs}{\vspace{0.2cm}}}%
%BeginExpansion
\vspace{0.2cm}%
%EndExpansion

\begin{remark}
[\textbf{This result improves Theorem 4.1 of \cite{PSZ2} in that ...}%
]\label{Rem_WhyBetter}Our theorem is more general and more flexible than its
predecessor, Theorem 4.1 of \cite{PSZ2}. While we do not exploit this in the
present article, we expect that the following points will be useful in
extending the results on interval maps presented in that paper to other
classes of concrete systems and of asymptotically rare sequences.
\newline\textbf{a)} The condition (\ref{Eq_SmallDelay}) on the order of the
delay times $\tau_{k}$ is more general and more natural that the condition
$\mu(E_{k})\,\tau_{k}\longrightarrow^{\mu_{E_{k}}}0$ used in \cite{PSZ2}%
.\newline\textbf{b)} In \cite{PSZ2} the $\tau_{k}$ were required to be of a
more specific type. \newline\textbf{c)} To control the densities in
(\ref{Eq_TheCompactnessAssm}), we merely assume that $\mathcal{U}$ is compact
in $L_{1}(\mu)$, a property of $\mathcal{U}$ which does not involve the
dynamics. In contrast, \cite{PSZ2} required $Y$ to be a $\mathcal{U}%
$\emph{-uniform set}, which is a dynamical condition (and thus harder to
check): it means that $\mathcal{U}\subseteq\mathcal{D}(\mu)$ is a class of
densities such that the $L_{\infty}(\mu)$-convergence asserted in
(\ref{Eq_Def_UnifSet}) holds uniformly in $u\in\mathcal{U}$, that is,
\begin{equation}
\sum_{k=0}^{n-1}\widehat{T}^{k}u\sim a_{n}\qquad%
\begin{array}
[c]{c}%
\text{as }n\rightarrow\infty\text{, uniformly mod }\mu\text{ on }Y\text{,}\\
\text{and uniformly in }u\in\mathcal{U}\text{.}%
\end{array}
\label{Eq_UunifSetRatioVersion}%
\end{equation}
\newline
\end{remark}

The following proof of the theorem seems more transparent than that of Theorem
4.1 of \cite{PSZ2}, its basic strategy being parallel to a standard argument
from the finite-measure case.%

%TCIMACRO{\TeXButton{VSs}{\vspace{0.2cm}}}%
%BeginExpansion
\vspace{0.2cm}%
%EndExpansion

\begin{proof}
\textbf{(i)} Equivalence of (\ref{Eq_jsdfsahdfasd}) and
(\ref{Eq_bvczwtefuwzguf}) is clear from Theorem \ref{T_Cge_to_H_alpha}, and we
prove them by validating condition (\ref{Eq_Same_Law_For_Ret_and_Hit}) of that
result. Let $R_{k}:=\gamma_{T}(\mu(E_{k}))\,\varphi_{E_{k}}$ and
$F_{k}(t):=\mu_{Y}(R_{k}\leq t)$, $\widetilde{F}_{k}(t):=\mu_{E_{k}}(R_{k}\leq
t)$, $t\geq0$. We prove (\ref{Eq_Same_Law_For_Ret_and_Hit}) by showing that
\begin{equation}
F_{k}-\widehat{F}_{k}\rightarrow0\text{\quad and\quad}\widehat{F}%
_{k}-\widetilde{F}_{k}\rightarrow0\text{ \quad}\lambda\text{-a.e. on
}[0,\infty)\text{ as }k\rightarrow\infty\text{,}%
\end{equation}
where $\widehat{F}_{k}$ denotes the distribution function of $R_{k}$ with
respect to the measure with density $v_{k}:=\widehat{T^{\tau_{k}}}(\mu
(E_{k})^{-1}1_{E_{k}})$, so that $\widehat{F}_{k}(t):=\int_{\{R_{k}\leq
t\}}v_{k}\,d\mu$.%
%TCIMACRO{\TeXButton{NL}{\newline}}%
%BeginExpansion
\newline
%EndExpansion
%

%TCIMACRO{\TeXButton{NI}{\noindent}}%
%BeginExpansion
\noindent
%EndExpansion
\textbf{(ii)} Recall that by definition (\ref{Eq_vcycvyvcvycvshgdv}) of
$\widehat{T^{\tau}}$ we have $\widehat{F}_{k}(t)=\mu_{E_{k}}(R_{k}\circ
T^{\tau_{k}}\leq t)$. To first check that for $\lambda$-a.e. $t\in
\lbrack0,\infty)$,
\begin{equation}
\mu_{E_{k}}(R_{k}\leq t)-\mu_{E_{k}}(R_{k}\circ T^{\tau_{k}}\leq
t)\longrightarrow0\text{ \quad as }k\rightarrow\infty\text{.}%
\label{Eq_lhjkkhjgkjgklgkjjgngklmjk}%
\end{equation}
we need only observe that (generalizing (\ref{Eq_nbmcxnbhdfvbmchvvrhf})), for
any measurable $E$ and $\tau\geq0$,
\begin{equation}
\varphi_{E}=\varphi_{E}\circ T^{\tau}+\tau\text{ \quad on }\{\varphi_{E}%
>\tau\}\text{,}\label{Eq_xvcygvdshgvchasvaaaaaa}%
\end{equation}
and hence
\begin{equation}
R_{k}=R_{k}\circ T^{\tau_{k}}+\gamma_{T}(\mu(E_{k}))\,\tau_{k}\text{ \quad on
}\{\varphi_{E_{k}}>\tau_{k}\}\text{.}%
\end{equation}
Given this, (\ref{Eq_lhjkkhjgkjgklgkjjgngklmjk}) is an easy consequence of
(\ref{Eq_SmallDelay}) and (\ref{Eq_BoringBeforeTau}). This shows that
$\widehat{F}_{k}-\widetilde{F}_{k}\rightarrow0$ holds\ $\lambda$-a.e. on
$[0,\infty)$.%
%TCIMACRO{\TeXButton{NL}{\newline}}%
%BeginExpansion
\newline
%EndExpansion
%

%TCIMACRO{\TeXButton{NI}{\noindent}}%
%BeginExpansion
\noindent
%EndExpansion
\textbf{(iii)} To also prove that $F_{k}-\widehat{F}_{k}\rightarrow0$ a.e. on
$[0,\infty)$, note first that (by the general definition of distributional
convergence) this statement is equivalent to saying that for every bounded
Lipschitz function $\psi:\mathbb{R}\rightarrow\mathbb{R}$ one has
\begin{equation}
\int(\psi\circ R_{k})\,1_{Y}\,d\mu-\int(\psi\circ R_{k})\,v_{k}\,d\mu
\longrightarrow0\text{ \quad as }k\rightarrow\infty\text{,}%
\end{equation}
which, by (\ref{Eq_TheCompactnessAssm}), will follow once we show that for any
such $\psi$,
\begin{equation}
\int(\psi\circ R_{k})\,u\,d\mu-\int(\psi\circ R_{k})\,u^{\ast}\,d\mu
\longrightarrow0\text{ \quad}%
\begin{array}
[c]{c}%
\text{as }k\rightarrow\infty\text{,}\\
\text{uniformly in }u,u^{\ast}\in\mathcal{U}%
\end{array}
\label{Eq_TheUnifCgeTrick}%
\end{equation}
(assume w.l.o.g. that $1_{Y}\in\mathcal{U}$). To this end, we now fix $\psi$
and any $\varepsilon>0$.%
%TCIMACRO{\TeXButton{NL}{\newline}}%
%BeginExpansion
\newline
%EndExpansion
%

%TCIMACRO{\TeXButton{NI}{\noindent}}%
%BeginExpansion
\noindent
%EndExpansion
\textbf{(iv)} In view of Remark \ref{R_UnifMETH} there is some $M\geq1$ such
that
\begin{align}
\left\vert \int(\psi\circ R_{k})\,\frac{1}{M}%
%TCIMACRO{\tsum _{j=0}^{M-1}}%
%BeginExpansion
{\textstyle\sum_{j=0}^{M-1}}
%EndExpansion
\widehat{T}^{j}(u-u^{\ast})\,d\mu\right\vert  & \leq\sup\left\vert
\psi\right\vert \,\left\Vert \frac{1}{M}%
%TCIMACRO{\tsum _{j=0}^{M-1}}%
%BeginExpansion
{\textstyle\sum_{j=0}^{M-1}}
%EndExpansion
\widehat{T}^{j}(u-u^{\ast})\right\Vert _{1}\nonumber\\
& <\frac{\varepsilon}{3}\text{ \quad for all }u,u^{\ast}\in\mathcal{U}%
\text{.}\label{Eq_Smear}%
\end{align}
On the other hand, for any $j\in\{0,\ldots,M-1\}$ and $u\in\mathcal{U}$, we
find recalling (\ref{Eq_xvcygvdshgvchasvaaaaaa}),%
\begin{align*}
\left\vert \int(\psi\circ R_{k})\,(u-\widehat{T}^{j}u)\,d\mu\right\vert  &
\leq\int\left\vert \psi\circ R_{k}-\psi\circ R_{k}\circ T^{j}\right\vert
\,u\,d\mu\\
& \leq2\sup\left\vert \psi\right\vert \,\int_{\{\varphi_{E_{k}}\leq
j\}}u\,d\mu+\mathrm{Lip}(\psi)\,\gamma_{T}(\mu(E_{k}))\,j\text{,}%
\end{align*}
so that
\begin{align*}
\left\vert \int(\psi\circ R_{k})\,\left(  u-\frac{1}{M}%
%TCIMACRO{\tsum _{j=0}^{M-1}}%
%BeginExpansion
{\textstyle\sum_{j=0}^{M-1}}
%EndExpansion
\widehat{T}^{j}u\right)  \,d\mu\right\vert  & \leq2\sup\left\vert
\psi\right\vert \,\int_{\{\varphi_{E_{k}}\leq M\}}u\,d\mu\\
& +\mathrm{Lip}(\psi)\,\gamma_{T}(\mu(E_{k}))\,M\text{.}%
\end{align*}
Since $\mathcal{U}$ is, in particular, uniformly integrable, there is some
$\delta>0$ such that
\begin{equation}
2\sup\left\vert \psi\right\vert \,\int_{A}u\,d\mu<\frac{\varepsilon}{6}\text{
\quad for all }u\in\mathcal{U}\text{ and }A\in\mathcal{A}\text{ with }%
\mu(A)<\delta\text{.}%
\end{equation}
But as $\mu(\varphi_{E_{k}}\leq M)\leq M\mu(E_{k})\rightarrow0$, this shows
there is some $k_{0}$ such that
\begin{equation}
\left\vert \int(\psi\circ R_{k})\,\left(  u-\frac{1}{M}%
%TCIMACRO{\tsum _{j=0}^{M-1}}%
%BeginExpansion
{\textstyle\sum_{j=0}^{M-1}}
%EndExpansion
\widehat{T}^{j}u\right)  \,d\mu\right\vert <\frac{\varepsilon}{3}\text{ \quad
for }k\geq k_{0}\text{ and }u\in\mathcal{U}\text{.}\label{Eq_Smear2}%
\end{equation}
But combining (\ref{Eq_Smear}) with two applications of (\ref{Eq_Smear2})
yields%
\[
\left\vert \int(\psi\circ R_{k})\,u\,d\mu-\int(\psi\circ R_{k})\,u^{\ast
}\,d\mu\right\vert <\varepsilon\text{ \quad for }k\geq k_{0}\text{ and
}u,u^{\ast}\in\mathcal{U}\text{,}%
\]
which proves our earlier claim (\ref{Eq_TheUnifCgeTrick}).
\end{proof}%

%TCIMACRO{\TeXButton{VSs}{\vspace{0.2cm}}}%
%BeginExpansion
\vspace{0.2cm}%
%EndExpansion

\section{The limit variables $\widetilde{\mathcal{H}}_{\alpha,\theta}$ and
$\mathcal{H}_{\alpha,\theta}$\label{Sec_ThePairs}}%

%TCIMACRO{\TeXButton{NI}{\noindent}}%
%BeginExpansion
\noindent
%EndExpansion
\textbf{Further natural limit laws.} We continue our discussion of rare events
in the setup of the preceding positive results: Let $T$ be a c.e.m.p.t. on
$(X,\mathcal{A},\mu)$, $\mu(X)=\infty$, $Y\in\mathcal{A}$ a uniform set and
$a_{T}\in\mathcal{R}_{\alpha}$ for some $\alpha\in(0,1] $. Consider
$\gamma_{T}(\mu(E_{k}))\,\varphi_{E_{k}}$ for sequences $(E_{k})$ of
asymptotically rare events in $Y\cap\mathcal{A}$.

Among all possible limits for return- and hitting time distributions of
sequences in $Y$, the variables $\mathcal{H}_{\alpha}$, $\alpha\in(0,1]$,
stand out as the only limits which can occur simultaneously as asymptotic
hitting distribution and as asymptotic return distribution. This property
leads to the strategy for proving convergence to these particular laws
developed above, and via Theorem \ref{T_SuffForCgeToHAlpha} or its predecessor
in \cite{PSZ2} one finds that the $\mathcal{H}_{\alpha}$ occur at almost every
point of prototypical examples.

In the present section we discuss a larger family $(\widetilde{H}%
_{\alpha,\theta},H_{\alpha,\theta})$ with $\alpha\in(0,1],\theta\in
\lbrack0,1]$ of pairs $(\widetilde{F},F)$ in $\mathcal{F}$ related as in
(\ref{Eq_MagicFormula1}), which still appear in a very natural way. Let
$\mathcal{E}$, $\mathcal{G}_{\alpha}$ and $\Theta_{\theta}$ be independent
random variables, with $\mathcal{E}$ and $\mathcal{G}_{\alpha}$ as before, and
$\Pr[\Theta_{\theta}=1]=1-\Pr[\Theta_{\theta}=0]=\theta$. Define random
variables
\begin{equation}
\mathcal{H}_{\alpha,\theta}:=\theta^{-1/\alpha}\mathcal{H}_{\alpha}\text{
\quad and \quad}\widetilde{\mathcal{H}}_{\alpha,\theta}:=\Theta_{\theta}%
\cdot\theta^{-1/\alpha}\mathcal{H}_{\alpha}\text{,}\label{Eq_MoreRVs}%
\end{equation}
($0^{-1/\alpha}:=\infty$) with distribution functions $H_{\alpha,\theta}\ $and
$\widetilde{H}_{\alpha,\theta}$ respectively, given by
\begin{equation}
H_{\alpha,\theta}(t):=H_{\alpha}(\theta^{1/\alpha}t)\text{ \quad and \quad
}\widetilde{H}_{\alpha,\theta}(t):=(1-\theta)+\theta\,H_{\alpha}%
(\theta^{1/\alpha}t)\text{,\quad}t\geq0\text{.}\label{Eq_MoreDFs}%
\end{equation}
Obviously, $(\widetilde{H}_{\alpha,1},H_{\alpha,1})=(H_{\alpha},H_{\alpha})$,
while $(\widetilde{H}_{\alpha,0},H_{\alpha,0})=(1,0)$.

\begin{lemma}
[\textbf{Characterization of} \textbf{\ }$\widetilde{H}_{\alpha,\theta}$
\textbf{and }$H_{\alpha,\theta}$]\label{L_BasicsHAlphaTheta}For $\alpha
\in(0,1],\theta\in\lbrack0,1]$ the pair $(\widetilde{F},F):=(\widetilde
{H}_{\alpha,\theta},H_{\alpha,\theta})$ satisfies (\ref{Eq_MagicFormula1}).
The distribution function $\widetilde{F}=\widetilde{H}_{\alpha,\theta}$ is the
unique element of $\mathcal{F}$ which satisfies
\begin{equation}
\widetilde{F}(t)=(1-\theta)+\theta\,\int_{0}^{t}[1-\widetilde{F}%
(s)]\,\alpha\left(  t-s\right)  ^{\alpha-1}ds\text{ \quad for }t\geq
0\text{.}\label{Eq_MagicFormulaWithTheta}%
\end{equation}

\end{lemma}

\begin{proof}
Straightforward from the corresponding properties of $H_{\alpha}$, Lemma
\ref{L_CharHalpha}.
\end{proof}%

%TCIMACRO{\TeXButton{VSs}{\vspace{0.2cm}}}%
%BeginExpansion
\vspace{0.2cm}%
%EndExpansion
%

%TCIMACRO{\TeXButton{NI}{\noindent}}%
%BeginExpansion
\noindent
%EndExpansion
\textbf{Convergence to }$(\widetilde{H}_{\alpha,\theta},H_{\alpha,\theta})$
\textbf{in uniform sets.} This characterization of $\widetilde{H}%
_{\alpha,\theta}$ leads to an easy characterization (generalizing Theorem
\ref{T_Cge_to_H_alpha}) of those situations in which the pair $(\widetilde
{H}_{\alpha,\theta},H_{\alpha,\theta})$ occurs in the limit.

\begin{theorem}
[\textbf{Characterizing convergence to }$(\widetilde{H}_{\alpha,\theta
},H_{\alpha,\theta})$ \textbf{in uniform sets}]\label{T_Cge_to_H_alphatheta}%
Let $T$ be a c.e.m.p.t. on $(X,\mathcal{A},\mu) $, $\mu(X)=\infty$. Assume
that $Y\in\mathcal{A}$ is a uniform set and that $a_{T}\in\mathcal{R}_{\alpha
}$ for some $\alpha\in(0,1]$. Suppose that $\theta\in\lbrack0,1]$, and that
$E_{k}\subseteq Y$, $k\geq1$, are sets of positive measure with $\mu
(E_{k})\rightarrow0$.

Then the normalized return-time distributions of the $E_{k}$ converge to
$\widetilde{H}_{\alpha,\theta}$,
\begin{equation}
\mu_{E_{k}}(\gamma_{T}(\mu(E_{k}))\,\varphi_{E_{k}}\leq t)\Longrightarrow
\widetilde{H}_{\alpha,\theta}(t)\text{ \quad as }k\rightarrow\infty
\text{,}\label{Eq_Ret_Cge_H_alphaT}%
\end{equation}
if and only if the normalized hitting-time distributions converge to
$H_{\alpha,\theta}$,
\begin{equation}
\mu_{Y}(\gamma_{T}(\mu(E_{k}))\,\varphi_{E_{k}}\leq t)\Longrightarrow
H_{\alpha,\theta}(t)\text{ \quad as }k\rightarrow\infty\text{,}%
\label{Eq_Hit_Cge_H_alphaT}%
\end{equation}
if and only if for a dense set of points $t\ $in $(0,\infty)$,
\begin{equation}
\mu_{E_{k}}(\gamma_{T}(\mu(E_{k}))\,\varphi_{E_{k}}\leq t)-\theta\mu
_{Y}(\gamma_{T}(\mu(E_{k}))\,\varphi_{E_{k}}\leq t)\longrightarrow
1-\theta\text{ \quad as }k\rightarrow\infty\text{.}\label{Eq_ExtendedVersion}%
\end{equation}

\end{theorem}%

%TCIMACRO{\TeXButton{VSs}{\vspace{0.2cm}}}%
%BeginExpansion
\vspace{0.2cm}%
%EndExpansion

\begin{proof}
By Theorem \ref{T_Ret_versus_Hit} and the first assertion of Lemma
\ref{L_BasicsHAlphaTheta}, (\ref{Eq_Ret_Cge_H_alphaT}) is equivalent to
(\ref{Eq_Hit_Cge_H_alphaT}). Via (\ref{Eq_MagicFormula1}) and
(\ref{Eq_MagicFormulaWithTheta}) either of these statements therefore implies
(\ref{Eq_ExtendedVersion}).

For the converse, start from (\ref{Eq_ExtendedVersion}), and assume for a
contradiction that, say, (\ref{Eq_Ret_Cge_H_alphaT}) fails, so that by Helly's
selection principle there is a subsequence $k_{j}\nearrow\infty$ of indices
and some $\widetilde{F}\in\mathcal{F}$, $\widetilde{F}\neq\widetilde
{H}_{\alpha,\theta}$, such that (\ref{Eq_AbstrDistrCgeII}) holds along that
subsequence. By Theorem \ref{T_Ret_versus_Hit}, so does
(\ref{Eq_AbstrDistrCgeIII}), where $F$ and $\widetilde{F}$ are related by
(\ref{Eq_MagicFormula1}). But then (\ref{Eq_ExtendedVersion}) ensures that
$\widetilde{F}=\widetilde{H}_{\alpha,\theta}$, see Lemma
\ref{L_BasicsHAlphaTheta}.

Exactly the same argument works if we assume that (\ref{Eq_Hit_Cge_H_alphaT}) fails.
\end{proof}%

%TCIMACRO{\TeXButton{VSs}{\vspace{0.2cm}}}%
%BeginExpansion
\vspace{0.2cm}%
%EndExpansion

The pair $(\widetilde{H}_{\alpha,\theta},H_{\alpha,\theta})$ occurs in
situations where a proportion $1-\theta$ of $E_{k}$ returns very quickly (at a
rate smaller than $\gamma_{T}(\mu(E_{k}))$) to this very set, while the
remaining part of relative measure $\theta$ does not, and instead becomes
spread out macroscopically, as in the hitting-time statistics. This is made
precise in the following result which extends Theorem
\ref{T_SuffForCgeToHAlpha} to situations with $\theta\neq1$. In concrete maps,
this is what happens at (hyperbolic) periodic points, see Section
\ref{Sec_Exples} below.

\begin{theorem}
[\textbf{Sufficient conditions for convergence to }$(\widetilde{H}%
_{\alpha,\theta},H_{\alpha,\theta})$]\label{T_SuffForCgeToHAlphaTheta}Let $T$
be a c.e.m.p.t. on $(X,\mathcal{A},\mu)$, $\mu(X)=\infty$, pointwise dual
ergodic with $a_{T}\in\mathcal{R}_{\alpha}$ for some $\alpha\in(0,1]$. Suppose
that $Y$ is a uniform set, and that $E_{k}\subseteq Y$, $k\geq1$, are sets of
positive measure with $\mu(E_{k})\rightarrow0$.

Suppose that there are measurable functions $\tau_{k}:E_{k}\rightarrow
\mathbb{N}_{0}$ for which
\begin{equation}
\gamma_{T}(\mu(E_{k}))\,\tau_{k}\overset{\mu_{E_{k}}}{\longrightarrow}0\text{
\quad as }k\rightarrow\infty\text{.}\label{Eq_SmallDelay2}%
\end{equation}
Assume further that there is some $\theta\in\lbrack0,1]$ such that for each
$k\geq1$,
\begin{equation}
E_{k}=E_{k}^{\bullet}\cup E_{k}^{\circ}\text{\ (disjoint)\quad with\quad}%
\mu_{E_{k}}(E_{k}^{\circ})\longrightarrow\theta\text{ as }k\rightarrow
\infty\text{,}\label{Eq_SplittingEk}%
\end{equation}
and
\begin{equation}
\mu_{E_{k}^{\bullet}}(\varphi_{E_{k}}>\tau_{k})\longrightarrow0\text{ \quad as
}k\rightarrow\infty\text{,}\label{Eq_FinishedBeforeTau}%
\end{equation}
while
\begin{equation}
\mu_{E_{k}^{\circ}}(\varphi_{E_{k}}<\tau_{k})\longrightarrow0\text{ \quad as
}k\rightarrow\infty\text{,}\label{Eq_BoringBeforeTau2}%
\end{equation}
and there is some $L_{1}(\mu)$-compact set $\mathcal{U}\subseteq
\mathcal{D}(\mu)$ such that
\begin{equation}
\widehat{T^{\tau_{k}}}(\mu(E_{k}^{\circ})^{-1}1_{E_{k}^{\circ}})\in
\mathcal{U}\text{ \quad for }k\geq1\text{.}\label{Eq_TheCompactnessAssm2}%
\end{equation}
Then the return-time distributions of the $E_{k}$ converge to $\widetilde
{H}_{\alpha,\theta}$,
\begin{equation}
\mu_{E_{k}}(\gamma_{T}(\mu(E_{k}))\,\varphi_{E_{k}}\leq t)\longrightarrow
\widetilde{H}_{\alpha,\theta}(t)\text{ \quad as }k\rightarrow\infty
\text{,}\label{Eq_nvhfndhdhdnfhdhdhdhdhd}%
\end{equation}
and the hitting-time distributions converge to $H_{\alpha,\theta}$,
\begin{equation}
\mu_{Y}(\gamma_{T}(\mu(E_{k}))\,\varphi_{E_{k}}\leq t)\longrightarrow
H_{\alpha,\theta}(t)\text{ \quad as }k\rightarrow\infty\text{.}%
\label{Eq_nvhfndhdhdnfhdhdhdhdhd2}%
\end{equation}

\end{theorem}%

%TCIMACRO{\TeXButton{VSs}{\vspace{0.2cm}}}%
%BeginExpansion
\vspace{0.2cm}%
%EndExpansion

\begin{proof}
We use Theorem \ref{T_Cge_to_H_alphatheta}. Represent $\widetilde{F}%
_{k}(t):=\mu_{E_{k}}(\gamma_{T}(\mu(E_{k}))\,\varphi_{E_{k}}\leq t)$ as
\[
\widetilde{F}_{k}(t)=\mu_{E_{k}}(E_{k}^{\bullet})\,\widetilde{F}_{k}^{\bullet
}(t)+\mu_{E_{k}}(E_{k}^{\circ})\,\widetilde{F}_{k}^{\circ}(t)
\]
with $\widetilde{F}_{k}^{\bullet}(t):=\mu_{E_{k}^{\bullet}}(\gamma_{T}%
(\mu(E_{k}))\,\varphi_{E_{k}}\leq t)$ and $\widetilde{F}_{k}^{\circ}%
(t):=\mu_{E_{k}^{\circ}}(\gamma_{T}(\mu(E_{k}))\,\varphi_{E_{k}}\leq t) $. By
assumptions (\ref{Eq_SplittingEk}), (\ref{Eq_FinishedBeforeTau}), and
(\ref{Eq_SmallDelay2}),
\[
\mu_{E_{k}}(E_{k}^{\bullet})\longrightarrow1-\theta\text{ \quad and \quad
}\widetilde{F}_{k}^{\bullet}(t)\longrightarrow1\text{ \quad as }%
k\rightarrow\infty\text{.}%
\]
To validate (\ref{Eq_ExtendedVersion}) it therefore suffices to show that for
a dense set of points $t\ $in $(0,\infty)$, the normalized hitting-time laws
$F_{k}(t):=\mu_{Y}(\gamma_{T}(\mu(E_{k}))\,\varphi_{E_{k}}\leq t)$ satisfy
\begin{equation}
\,\widetilde{F}_{k}^{\circ}(t)-F_{k}(t)\longrightarrow0\text{ \quad as
}k\rightarrow\infty\text{.}\label{Eq_NuZagn}%
\end{equation}
But the proof of (\ref{Eq_NuZagn}) works exactly like that of Theorem
\ref{T_SuffForCgeToHAlpha}: Simply replace $\,\widetilde{F}_{k}$ by
$\,\widetilde{F}_{k}^{\circ}$ and $v_{k}$ by $v_{k}^{\circ}:=\widehat
{T^{\tau_{k}}}(\mu(E_{k}^{\circ})^{-1}1_{E_{k}^{\circ}})$ etc.
\end{proof}%

%TCIMACRO{\TeXButton{VSs}{\vspace{0.2cm}}}%
%BeginExpansion
\vspace{0.2cm}%
%EndExpansion

\begin{remark}
The $\theta=1$ case is a mild generalization of Theorem
\ref{T_SuffForCgeToHAlpha}. In the $\theta=0$ case it is clear that
assumptions (\ref{Eq_SmallDelay2})-(\ref{Eq_FinishedBeforeTau}) alone imply
(\ref{Eq_nvhfndhdhdnfhdhdhdhdhd}) and (\ref{Eq_nvhfndhdhdnfhdhdhdhdhd2}).
\end{remark}%

%TCIMACRO{\TeXButton{VSs}{\vspace{0.2cm}}}%
%BeginExpansion
\vspace{0.2cm}%
%EndExpansion

\section{The barely recurrent case}%

%TCIMACRO{\TeXButton{NI}{\noindent}}%
%BeginExpansion
\noindent
%EndExpansion
\textbf{The natural limit distributions.} The strategy of the preceding two
sections can also be used to obtain analogous limit theorems in the
\emph{barely recurrent} $\alpha=0$ case, where it is natural to study the
variables $\mu(E_{k})\,a_{T}(\varphi_{E_{k}})$ as discussed at the end of
Section 4.

We have already mentioned the limit law with distribution function $H^{\ast
}(t):=t/(1+t)$, $t\geq0$, obtained, for specific skew-product systems, in
\cite{PS}, \cite{PS2}, and \cite{PSZ}. Theorem \ref{T_Ret_versus_Hit2} now
allows us to show, via Remark \ref{Rem_Comments2} a), that for barely
recurrent general pointwise dual ergodic systems, $H^{\ast}$ plays the same
role as $H_{\alpha}$ did in the case $\alpha\in(0,1]$. More generally, there
are pairs $(\widetilde{H}_{\theta}^{\ast},H_{\theta}^{\ast})$, $\theta
\in\lbrack0,1]$, which play a role parallel to that of $(\widetilde{H}%
_{\alpha,\theta},H_{\alpha,\theta})$ in Section \ref{Sec_ThePairs}. Define
\begin{equation}
H_{\theta}^{\ast}(t):=H^{\ast}(\theta t)\text{\quad and\quad}\widetilde
{H}_{\theta}^{\ast}(t):=(1-\theta)+\theta H^{\ast}(\theta t)\text{,\quad}%
t\geq0\text{.}%
\end{equation}
Then $(\widetilde{H}_{1}^{\ast},H_{1}^{\ast})=(H^{\ast},H^{\ast})$, while
$(\widetilde{H}_{0}^{\ast},H_{0}^{\ast})=(1,0)$, and it is straightforward to
verify the following observation.

\begin{lemma}
[\textbf{Characterization of }$\widetilde{H}_{\theta}^{\ast}$\textbf{\ and
}$H_{\theta}^{\ast}$]\label{L_CharStars}For $\theta\in\lbrack0,1]$, the pair
$(\widetilde{G},G):=(\widetilde{H}_{\theta}^{\ast},H_{\theta}^{\ast})$
satisfies $G(t)=t[1-\widetilde{G}(t)]$ for $t\geq0$. The distribution function
$\widetilde{G}=\widetilde{H}_{\theta}^{\ast}$ is the unique element of
$\mathcal{F}$ which satisfies
\begin{equation}
\widetilde{G}(t)=(1-\theta)+\theta t[1-\widetilde{G}(t)]\text{\quad for }%
t\geq0\text{.}%
\end{equation}

\end{lemma}

We then obtain a result analogous to Theorems \ref{T_Cge_to_H_alpha} and
\ref{L_BasicsHAlphaTheta}.

\begin{theorem}
[\textbf{Characterizing convergence to }$(\widetilde{H}_{\theta}^{\ast
},H_{\theta}^{\ast})$ \textbf{in uniform sets}]%
\label{T_Cge_to_H_alphathetaStar}Let $T$ be a c.e.m.p.t. on $(X,\mathcal{A}%
,\mu)$, $\mu(X)=\infty$. Assume that $Y\in\mathcal{A}$ is a uniform set and
that $a_{T}\in\mathcal{R}_{0}$. Suppose that $\theta\in\lbrack0,1]$, and that
$E_{k}\subseteq Y$, $k\geq1$, are sets of positive measure with $\mu
(E_{k})\rightarrow0$.

Then the distorted return-time distributions of the $E_{k}$ converge to
$\widetilde{H}_{\theta}^{\ast}$,
\begin{equation}
\mu_{E_{k}}(\mu(E_{k})\,a_{T}(\varphi_{E_{k}})\leq t)\Longrightarrow
\widetilde{H}_{\theta}^{\ast}(t)\text{ \quad as }k\rightarrow\infty\text{,}%
\end{equation}
if and only if the distorted hitting-time distributions converge to
$H_{\theta}^{\ast}$,
\begin{equation}
\mu_{Y}(\mu(E_{k})\,a_{T}(\varphi_{E_{k}})\leq t)\Longrightarrow H_{\theta
}^{\ast}(t)\text{ \quad as }k\rightarrow\infty\text{,}%
\end{equation}
if and only if for a dense set of points $t\ $in $(0,\infty)$,
\begin{equation}
\mu_{E_{k}}(\mu(E_{k})\,a_{T}(\varphi_{E_{k}})\leq t)-\theta\mu_{Y}(\mu
(E_{k})\,a_{T}(\varphi_{E_{k}})\leq t)\longrightarrow1-\theta\text{ \quad as
}k\rightarrow\infty\text{.}\label{Eq_ExtendedVersion2}%
\end{equation}

\end{theorem}%

%TCIMACRO{\TeXButton{VSs}{\vspace{0.2cm}}}%
%BeginExpansion
\vspace{0.2cm}%
%EndExpansion

\begin{proof}
Like Theorems \ref{T_Cge_to_H_alpha} and \ref{L_BasicsHAlphaTheta}, using
Theorem \ref{T_Ret_versus_Hit2} and Lemma \ref{L_CharStars}.
\end{proof}%

%TCIMACRO{\TeXButton{VSs}{\vspace{0.2cm}}}%
%BeginExpansion
\vspace{0.2cm}%
%EndExpansion

This, in turn, leads to sufficient conditions for convergence to $H^{\ast}$
and, more generally, to $(\widetilde{H}_{\theta}^{\ast},H_{\theta}^{\ast})$,
parallel to those of Theorems \ref{T_SuffForCgeToHAlpha} and
\ref{T_SuffForCgeToHAlphaTheta} above.

\begin{theorem}
[\textbf{Sufficient conditions for convergence to }$(\widetilde{H}_{\theta
}^{\ast},H_{\theta}^{\ast})$]\label{T_SuffForCgeToHAlphaThetaStar}Let $T$ be a
c.e.m.p.t. on $(X,\mathcal{A},\mu)$, $\mu(X)=\infty$, pointwise dual ergodic
with $a_{T}\in\mathcal{R}_{0}$. Suppose that $Y$ is a uniform set, and that
$E_{k}\subseteq Y$, $k\geq1$, are sets of positive measure with $\mu
(E_{k})\rightarrow0$.

Suppose that there are measurable functions $\tau_{k}:E_{k}\rightarrow
\mathbb{N}_{0}$ for which
\begin{equation}
\mu(E_{k})\,a_{T}(\tau_{k})\overset{\mu_{E_{k}}}{\longrightarrow}0\text{ \quad
as }k\rightarrow\infty\text{.}\label{Eq_Schandfleck}%
\end{equation}
Assume further that there is some $\theta\in\lbrack0,1]$ such that for each
$k\geq1$,
\begin{equation}
E_{k}=E_{k}^{\bullet}\cup E_{k}^{\circ}\text{\ (disjoint)\quad with\quad}%
\mu_{E_{k}}(E_{k}^{\circ})\longrightarrow\theta\text{ as }k\rightarrow
\infty\text{,}\label{Eq_bhvdhjbhjh}%
\end{equation}
and
\begin{equation}
\mu_{E_{k}^{\bullet}}(\varphi_{E_{k}}>\tau_{k})\longrightarrow0\text{ \quad as
}k\rightarrow\infty\text{,}\label{Eq_khgkhgkhjkhgkf}%
\end{equation}
while
\begin{equation}
\mu_{E_{k}^{\circ}}(\varphi_{E_{k}}<\tau_{k})\longrightarrow0\text{ \quad as
}k\rightarrow\infty\text{,}\label{Eq_hdfhsdhshsahsadhsdhsahsah}%
\end{equation}
and there is some $L_{1}(\mu)$-compact set $\mathcal{U}\subseteq
\mathcal{D}(\mu)$ such that
\begin{equation}
\widehat{T^{\tau_{k}}}(\mu(E_{k}^{\circ})^{-1}1_{E_{k}^{\circ}})\in
\mathcal{U}\text{ \quad for }k\geq1\text{.}\label{Eq_cbcbcbcbcbc}%
\end{equation}
Then the distorted return-time distributions of the $E_{k}$ converge to
$\widetilde{H}_{\theta}^{\ast}$,
\begin{equation}
\mu_{E_{k}}(\mu(E_{k})\,a_{T}(\varphi_{E_{k}})\leq t)\longrightarrow
\widetilde{H}_{\theta}^{\ast}(t)\text{ \quad as }k\rightarrow\infty
\text{,}\label{Eq_csdvgcsdvgsd}%
\end{equation}
and the distorted hitting-time distributions converge to $H_{\theta}^{\ast}
$,
\begin{equation}
\mu_{Y}(\mu(E_{k})\,a_{T}(\varphi_{E_{k}})\leq t)\longrightarrow H_{\theta
}^{\ast}(t)\text{ \quad as }k\rightarrow\infty\text{.}\label{Eq_csdvgcsdvgsd2}%
\end{equation}

\end{theorem}%

%TCIMACRO{\TeXButton{VSs}{\vspace{0.2cm}}}%
%BeginExpansion
\vspace{0.2cm}%
%EndExpansion

\begin{proof}
\textbf{(i)} We will employ Theorem \ref{T_Cge_to_H_alphathetaStar}. Let
$\mathsf{R}_{k}:=\mu(E_{k})\,a_{T}(\varphi_{E_{k}})$ and represent
$\widetilde{G}_{k}(t):=\mu_{E_{k}}(\mathsf{R}_{k}\leq t)$ as
\[
\widetilde{G}_{k}(t)=\mu_{E_{k}}(E_{k}^{\bullet})\,\widetilde{G}_{k}^{\bullet
}(t)+\mu_{E_{k}}(E_{k}^{\circ})\,\widetilde{G}_{k}^{\circ}(t)
\]
with $\widetilde{G}_{k}^{\bullet}(t):=\mu_{E_{k}^{\bullet}}(\mathsf{R}_{k}\leq
t)$ and $\widetilde{G}_{k}^{\circ}(t):=\mu_{E_{k}^{\circ}}(\mathsf{R}_{k}\leq
t)$. By (\ref{Eq_bhvdhjbhjh}), (\ref{Eq_khgkhgkhjkhgkf}) and
(\ref{Eq_Schandfleck}),
\[
\mu_{E_{k}}(E_{k}^{\bullet})\longrightarrow1-\theta\text{ \quad and \quad
}\widetilde{G}_{k}^{\bullet}(t)\longrightarrow1\text{ \quad as }%
k\rightarrow\infty\text{.}%
\]
To validate (\ref{Eq_ExtendedVersion2}) it therefore suffices to show that for
a dense set of points $t\ $in $(0,\infty)$, the distorted hitting-time laws
$G_{k}(t):=\mu_{Y}(\mathsf{R}_{k}\leq t)$ satisfy
\begin{equation}
\,\widetilde{G}_{k}^{\circ}(t)-G_{k}(t)\longrightarrow0\text{ \quad as
}k\rightarrow\infty\text{.}%
\end{equation}
We prove the latter by showing that
\begin{equation}
\widetilde{G}_{k}^{\circ}-\widehat{G}_{k}^{\circ}\rightarrow0\text{\quad
and\quad}\widehat{G}_{k}^{\circ}-G_{k}\rightarrow0\text{ \quad}\lambda
\text{-a.e. on }[0,\infty)\text{ as }k\rightarrow\infty\text{,}%
\end{equation}
where $\widehat{G}_{k}$ denotes the distribution function of $\mathsf{R}_{k}$
with respect to the measure with density $v_{k}^{\circ}:=\widehat{T^{\tau_{k}%
}}(\mu(E_{k}^{\circ})^{-1}1_{E_{k}^{\circ}})$, so that $\widehat{G}_{k}%
^{\circ}(t):=\int_{\{\mathsf{R}_{k}\leq t\}}v_{k}^{\circ}\,d\mu$.%
%TCIMACRO{\TeXButton{NL}{\newline}}%
%BeginExpansion
\newline
%EndExpansion
%

%TCIMACRO{\TeXButton{NI}{\noindent}}%
%BeginExpansion
\noindent
%EndExpansion
\textbf{(ii)} Recall that by definition (\ref{Eq_vcycvyvcvycvshgdv}) of
$\widehat{T^{\tau}}$ we have $\widehat{G}_{k}^{\circ}(t)=\mu_{E_{k}^{\circ}%
}(\mathsf{R}_{k}\circ T^{\tau_{k}}\leq t)$. To first check that for $\lambda
$-a.e. $t\in\lbrack0,\infty)$,
\begin{equation}
\mu_{E_{k}^{\circ}}(\mathsf{R}_{k}\leq t)-\mu_{E_{k}^{\circ}}(\mathsf{R}%
_{k}\circ T^{\tau_{k}}\leq t)\longrightarrow0\text{ \quad as }k\rightarrow
\infty\text{.}\label{Eq_lhjkkhjgkjgklgkjjgngklmjk2}%
\end{equation}
recall (\ref{Eq_xvcygvdshgvchasvaaaaaa}) and (\ref{Eq_AnSubadd}) to see that
\begin{align}
\mathsf{R}_{k}-\mathsf{R}_{k}\circ T^{\tau_{k}}  & =\mu(E_{k})\left(
a_{T}(\varphi_{E_{k}}\circ T^{\tau_{k}}+\tau_{k})-a_{T}(\varphi_{E_{k}}\circ
T^{\tau_{k}})\right) \nonumber\\
& \leq\mu(E_{k})\,a_{T}(\tau_{k})\text{ \quad on }\{\varphi_{E_{k}}>\tau
_{k}\}\text{.}%
\end{align}
Now (\ref{Eq_Schandfleck}) and (\ref{Eq_hdfhsdhshsahsadhsdhsahsah}) imply that
$\widetilde{G}_{k}^{\circ}-\widehat{G}_{k}^{\circ}\rightarrow0$
holds\ $\lambda$-a.e. on $[0,\infty)$.%
%TCIMACRO{\TeXButton{NL}{\newline}}%
%BeginExpansion
\newline
%EndExpansion
%

%TCIMACRO{\TeXButton{NI}{\noindent}}%
%BeginExpansion
\noindent
%EndExpansion
\textbf{(iii)} It remains to prove that $\widehat{G}_{k}^{\circ}%
-G_{k}\rightarrow0$ a.e. on $[0,\infty)$, which is equivalent to
\begin{equation}
\int(\psi\circ\mathsf{R}_{k})\,1_{Y}\,d\mu-\int(\psi\circ\mathsf{R}%
_{k})\,v_{k}^{\circ}\,d\mu\longrightarrow0\text{ \quad as }k\rightarrow\infty
\end{equation}
for every bounded Lipschitz function $\psi:\mathbb{R}\rightarrow\mathbb{R}$.
In view of (\ref{Eq_cbcbcbcbcbc}), this follows if we show that for any such
$\psi$,
\begin{equation}
\int(\psi\circ\mathsf{R}_{k})\,u\,d\mu-\int(\psi\circ\mathsf{R}_{k})\,u^{\ast
}\,d\mu\longrightarrow0\text{ \quad}%
\begin{array}
[c]{c}%
\text{as }k\rightarrow\infty\text{,}\\
\text{uniformly in }u,u^{\ast}\in\mathcal{U}%
\end{array}
\label{Eq_TheUnifCgeTrick1}%
\end{equation}
(assume w.l.o.g. that $1_{Y}\in\mathcal{U}$). To this end, we now fix $\psi$
and any $\varepsilon>0$.%
%TCIMACRO{\TeXButton{NL}{\newline}}%
%BeginExpansion
\newline
%EndExpansion
%

%TCIMACRO{\TeXButton{NI}{\noindent}}%
%BeginExpansion
\noindent
%EndExpansion
\textbf{(iv)} As a consequence of Remark \ref{R_UnifMETH} there is some
$M\geq1$ such that
\begin{align}
\left\vert \int(\psi\circ\mathsf{R}_{k})\,\frac{1}{M}%
%TCIMACRO{\tsum _{j=0}^{M-1}}%
%BeginExpansion
{\textstyle\sum_{j=0}^{M-1}}
%EndExpansion
\widehat{T}^{j}(u-u^{\ast})\,d\mu\right\vert  & \leq\sup\left\vert
\psi\right\vert \,\left\Vert \frac{1}{M}%
%TCIMACRO{\tsum _{j=0}^{M-1}}%
%BeginExpansion
{\textstyle\sum_{j=0}^{M-1}}
%EndExpansion
\widehat{T}^{j}(u-u^{\ast})\right\Vert _{1}\nonumber\\
& <\frac{\varepsilon}{3}\text{ \quad for all }u,u^{\ast}\in\mathcal{U}%
\text{.}\label{Eq_hgdvchsagvchasvchasdvcasvcsavcsacd}%
\end{align}
For any $j\in\{0,\ldots,M-1\}$ and $u\in\mathcal{U}$,
(\ref{Eq_xvcygvdshgvchasvaaaaaa}) and (\ref{Eq_AnSubadd}) entail that
\begin{align*}
\mathsf{R}_{k}-\mathsf{R}_{k}\circ T^{j}  & =\mu(E_{k})\left(  a_{T}%
(\varphi_{E_{k}}\circ T^{j}+j)-a_{T}(\varphi_{E_{k}}\circ T^{j})\right) \\
& \leq\mu(E_{k})\,a_{T}(j)\text{ \quad on }\{\varphi_{E_{k}}>j\}\text{,}%
\end{align*}
and therefore%
\[
\left\vert \int(\psi\circ\mathsf{R}_{k})\,(u-\widehat{T}^{j}u)\,d\mu
\right\vert \leq2\sup\left\vert \psi\right\vert \,\int_{\{\varphi_{E_{k}}\leq
j\}}u\,d\mu+\mathrm{Lip}(\psi)\,a_{T}(j)\,\mu(E_{k})\text{,}%
\]
so that
\begin{align*}
\left\vert \int(\psi\circ\mathsf{R}_{k})\,\left(  u-\frac{1}{M}%
%TCIMACRO{\tsum _{j=0}^{M-1}}%
%BeginExpansion
{\textstyle\sum_{j=0}^{M-1}}
%EndExpansion
\widehat{T}^{j}u\right)  \,d\mu\right\vert  & \leq2\sup\left\vert
\psi\right\vert \,\int_{\{\varphi_{E_{k}}\leq M\}}u\,d\mu\\
& +\mathrm{Lip}(\psi)\,a_{T}(M)\,\mu(E_{k})\text{.}%
\end{align*}
We then find, exactly as in the proof of Theorem \ref{T_Cge_to_H_alpha}, some
$k_{0}\geq1$ such that
\[
\left\vert \int(\psi\circ\mathsf{R}_{k})\,u\,d\mu-\int(\psi\circ\mathsf{R}%
_{k})\,u^{\ast}\,d\mu\right\vert <\varepsilon\text{ \quad for }k\geq
k_{0}\text{ and }u,u^{\ast}\in\mathcal{U}\text{,}%
\]
which proves our earlier claim (\ref{Eq_TheUnifCgeTrick1}).
\end{proof}%

%TCIMACRO{\TeXButton{VSs}{\vspace{0.2cm}}}%
%BeginExpansion
\vspace{0.2cm}%
%EndExpansion

\begin{remark}
In the $\theta=0$ case it is clear that assumptions (\ref{Eq_Schandfleck}%
)-(\ref{Eq_khgkhgkhjkhgkf}) alone imply (\ref{Eq_csdvgcsdvgsd}) and
(\ref{Eq_csdvgcsdvgsd2}).
\end{remark}

\begin{remark}
\label{R_cdvbgcdbcd}Note that the assumption $\alpha=0$ enters the argument
only when we appeal to Theorem \ref{T_Cge_to_H_alphathetaStar}. Indeed,
arguing as in our reduction of \ref{T_Ret_versus_Hit} to
\ref{T_Ret_versus_Hit2}, we could derive Theorems \ref{T_SuffForCgeToHAlpha}
and \ref{T_SuffForCgeToHAlphaTheta} using the above proof.
\end{remark}%

%TCIMACRO{\TeXButton{VSs}{\vspace{0.2cm}}}%
%BeginExpansion
\vspace{0.2cm}%
%EndExpansion

\section{Application to basic prototypical systems\label{Sec_Exples}}%

%TCIMACRO{\TeXButton{NI}{\noindent}}%
%BeginExpansion
\noindent
%EndExpansion
\textbf{A basic family of null-recurrent interval maps.} To illustrate the use
of the abstract Theorems \ref{T_SuffForCgeToHAlpha},
\ref{T_SuffForCgeToHAlphaTheta} and \ref{T_SuffForCgeToHAlphaThetaStar}, in
the simplest nontrivial setup, we consider a family of interval maps with an
indifferent fixed point and two full branches.

For standard classes of probability preserving maps, return- and hitting-time
statistics for neighborhoods of hyperbolic periodic points are well understood
(see e.g. \cite{K}, \cite{F}). Examples \ref{Ex_B} and \ref{Ex_D} below appear
to be the first results on return- and hitting-time statistics at hyperbolic
fixed points of smooth infinite measure preserving maps. (They extend to
hyperbolic fixed or periodic points of more general maps like those of
\cite{Z1}, \cite{Z2} by routine arguments.)

Examples \ref{Ex_C} and \ref{Ex_D}, on the other hand, seem to be the first to
identify nontrivial return- and hitting-time statistics for $\alpha=0$ maps
without skew product structure. \newline%

%TCIMACRO{\TeXButton{NI}{\noindent}}%
%BeginExpansion
\noindent
%EndExpansion
Consider maps $T$ on $X=[0,1]$ which satisfy, for some $c\in(0,1)$, the following:

\begin{enumerate}
\item[(a)] The restrictions of $T$ to $(0,c)$ and $(c,1)$ map onto $(0,1)$,
and possess $\mathcal{C}^{2}$-extensions to $[0,c]$ and $[c,1]$ respectively.

\item[(b)] $T^{\prime}(0)=1$ while $T^{\prime}>1$ on $(0,c]\cup\lbrack c,1]$.

\item[(c)] $T^{\prime}$ is increasing in some neighbourhood of $0$.
\end{enumerate}%

%TCIMACRO{\TeXButton{NI}{\noindent}}%
%BeginExpansion
\noindent
%EndExpansion
According to \cite{T2}, any such $T$ has an infinite invariant measure $\mu$
with a density $h$ relative to Lebesgue measure $\lambda$ which is strictly
positive and continuous on $(0,1]$, and $T$ is conservative ergodic for $\mu$.
Due to \cite{A2} (or \cite{T3}), $Y:=(c,1)$ is a DK-set.

For simplicity we look at a concrete family of such maps $T=T_{\alpha,\theta}$
with $c=c_{\alpha,\theta}=\theta$, parametrized by $\alpha\in\lbrack0,1]$ and
$\theta\in(0,1)$, for which $T$ is affine with slope $1/(1-\theta)$ on
$[\theta,1]$, while

\begin{enumerate}
\item[(d)] $Tx=\left\{
\begin{array}
[c]{cc}%
x+\vartheta_{\alpha,\theta}\,x^{1+1/\alpha} & \text{if }\alpha\leq1\text{,}\\
x+\vartheta_{\alpha,\theta}\,x^{2}e^{-1/x} & \text{if }\alpha=0\text{,}%
\end{array}
\right.  $ for $x\in(0,\theta]$, \newline
\end{enumerate}%

%TCIMACRO{\TeXButton{NI}{\noindent}}%
%BeginExpansion
\noindent
%EndExpansion
with $\vartheta=\vartheta_{\alpha,\theta}>0$ the appropriate constant. Then
(again \cite{A2} or \cite{T3}), $a_{T}\in\mathcal{R}_{\alpha}$ where (for a
constant $\kappa_{\alpha,\theta}$, explicit in terms of the invariant density
$h=h_{\alpha,\theta}$)
\begin{equation}
a_{T}(s)\sim\kappa_{\alpha,\theta}\cdot\left\{
\begin{array}
[c]{cc}%
s/\log s & \text{if }\alpha=1\text{,}\\
s^{\alpha} & \text{if }\alpha<1\text{,}\\
\log s & \text{if }\alpha=0\text{,}%
\end{array}
\right.  \quad\text{as }s\rightarrow\infty\text{,}\label{Eq_AsyConcreteA}%
\end{equation}
and hence%
\begin{equation}
\gamma_{T}(s)\sim\kappa_{\alpha,\theta}^{\alpha}\cdot\left\{
\begin{array}
[c]{cc}%
s/(-\log s) & \text{if }\alpha=1\text{,}\\
s^{1/\alpha} & \text{if }\alpha<1\text{,}%
\end{array}
\right.  \quad\text{as }s\searrow0\text{.}%
\end{equation}%
%TCIMACRO{\TeXButton{VSs}{\vspace{0.2cm}}}%
%BeginExpansion
\vspace{0.2cm}%
%EndExpansion
%

%TCIMACRO{\TeXButton{NI}{\noindent}}%
%BeginExpansion
\noindent
%EndExpansion
\textbf{Concrete sequences of asymptotically rare events.} For these maps
there are simple sequences $(E_{k})$ for which all types of limit laws
discussed before appear:

Let $c=:b_{1}<b_{2}<\ldots<b_{k}\nearrow1$ be such that $Tb_{k+1}=b_{k}$, and
define $E_{k}^{\prime}:=(b_{k},b_{k+1})$, $E_{k}^{\prime\prime}:=(b_{k},1)$,
$k\geq1$. The $E_{k}^{\prime\prime}$ are cylinders shrinking to the hyperbolic
fixed point $x=0$, while the $E_{k}^{\prime}$ approach but never contain that
point. A calculation gives $\lambda(E_{k}^{\prime})=\theta(1-\theta)^{k}$,
$\lambda(E_{k}^{\prime\prime})=(1-\theta)^{k}$, while $\mu(E_{k}^{\prime})\sim
h(1)\lambda(E_{k}^{\prime})$ and $\mu(E_{k}^{\prime\prime})\sim h(1)\lambda
(E_{k}^{\prime\prime})$ as $k\rightarrow\infty$. Note that
\[
\varphi_{E_{k}^{\prime}}\geq\varphi_{E_{k}^{\prime\prime}}\geq k\text{ on
}E_{k}^{\prime}\text{ \quad and \quad}\varphi_{E_{k}^{\prime\prime}}=1\text{
on }E_{k}^{\prime\prime}\setminus E_{k}^{\prime}\text{.}%
\]
Let $\mathcal{D}_{r}$ be the collection of all probability densities on
$F:=(0,c)$ which have a version respecting $r$ as a Lipschitz constant. By
Arzela-Ascoli, each $\mathcal{D}_{r}$ is relatively compact in $L_{1}(\mu)$.
Setting $\tau_{k}:=k$, we have
\[
v_{k}:=\widehat{T^{\tau_{k}}}(\mu(E_{k}^{\prime})^{-1}1_{E_{k}^{\prime}}%
)=\mu(E_{k}^{\prime})(f^{k})^{\prime}\text{ \quad for }k\geq1\text{,}%
\]
where $f:=(T\mid_{Y})^{-1}$ is the inverse of the uniformly expanding branch
of $T$. By the standard bounded distortion estimate for uniformly expanding
$\mathcal{C}^{2}$-maps (e.g. \S 4.3 of \cite{A0}), we see that $v_{k}%
\in\mathcal{D}_{r}$ for all $k\geq1$, provided that $r\geq r(\alpha,\theta)$
for some explicitly known $r(\alpha,\theta)$. Using these observations, we
easily obtain the following specific limit theorems. (Recall that for
$\alpha\in(0,1)$ and $\theta\in(0,1]$ we don't have an explicit expression for
$H_{\alpha}(t)$.)

\begin{example}
[\textbf{Convergence to }$H_{\alpha}$ \textbf{with} $0<\alpha\leq 1 $]\label{Ex_A}
Let $T=T_{\alpha,\theta}$ with $\alpha\in(0,1]$, $\theta\in(0,1)$
and consider the sequence $(E_{k})_{k\geq1}$ with $E_{k}:=E_{k}^{\prime}$.
Then Theorem \ref{T_SuffForCgeToHAlpha} applies (with $\tau_{k}:=k$), showing,
for $\alpha\in(0,1)$, that%
\[%
\begin{array}
[c]{c}%
\mu_{E_{k}}(\kappa_{\alpha,\theta}^{\alpha}(h(1)\theta)^{1/\alpha}%
(1-\theta)^{k/\alpha}\,\varphi_{E_{k}}\leq t)\Longrightarrow H_{\alpha}(t)\\
\mu_{Y}(\kappa_{\alpha,\theta}^{\alpha}(h(1)\theta)^{1/\alpha}(1-\theta
)^{k/\alpha}\,\varphi_{E_{k}}\leq t)\Longrightarrow H_{\alpha}(t)
\end{array}
\text{ \quad as }k\rightarrow\infty\text{.}%
\]
In case $\alpha=1$ we get
\[%
\begin{array}
[c]{c}%
\mu_{E_{k}}\left(  \frac{\kappa_{1,\theta}h(1)\theta}{-\log(1-\theta)}%
\,\frac{(1-\theta)^{k}}{k}\,\varphi_{E_{k}}\leq t\right)  \Longrightarrow
1-e^{-t}\\
\mu_{Y}\left(  \frac{\kappa_{1,\theta}h(1)\theta}{-\log(1-\theta)}%
\,\frac{(1-\theta)^{k}}{k}\,\varphi_{E_{k}}\leq t\right)  \Longrightarrow
1-e^{-t}%
\end{array}
\text{ \quad as }k\rightarrow\infty\text{.}%
\]
(For this example we could also use Theorem 4.1 of \cite{PSZ2}, but only if we
appeal to an additional result, Proposition 6.2 of that paper.)
\end{example}

\begin{example}
[\textbf{Convergence to }$H_{\alpha,\theta}$ \textbf{with} $0<\alpha\leq 1$,
$\theta\in(0,1)$]\label{Ex_B}Let $T=T_{\alpha,\theta}$ with $\alpha\in(0,1]$,
$\theta\in(0,1)$ and consider the sequence $(E_{k})_{k\geq1}$ with
$E_{k}:=E_{k}^{\prime\prime}$. Then Theorem \ref{T_SuffForCgeToHAlphaTheta}
applies (with $\tau_{k}:=k$ and $E_{k}^{\circ}:=E_{k}^{\prime}$), and we find,
for $\alpha\in(0,1)$, that
\[%
\begin{array}
[c]{c}%
\mu_{E_{k}}(\kappa_{\alpha,\theta}^{\alpha}h(1)^{1/\alpha}(1-\theta
)^{k/\alpha}\,\varphi_{E_{k}}\leq t)\Longrightarrow(1-\theta)+\theta
\,H_{\alpha}(\theta^{1/\alpha}t)\\
\mu_{Y}(\kappa_{\alpha,\theta}^{\alpha}h(1)^{1/\alpha}(1-\theta)^{k/\alpha
}\,\varphi_{E_{k}}\leq t)\Longrightarrow H_{\alpha}(\theta^{1/\alpha}t)
\end{array}
\text{ as }k\rightarrow\infty\text{.}%
\]
In case $\alpha=1$ we get
\[%
\begin{array}
[c]{c}%
\mu_{E_{k}}\left(  \frac{\kappa_{1,\theta}h(1)}{-\log(1-\theta)}%
\,\frac{(1-\theta)^{k}}{k}\,\varphi_{E_{k}}\leq t\right)  \Longrightarrow
1-\theta e^{-\theta^{1/\alpha}t}\\
\mu_{Y}\left(  \frac{\kappa_{1,\theta}h(1)}{-\log(1-\theta)}\,\frac
{(1-\theta)^{k}}{k}\,\varphi_{E_{k}}\leq t\right)  \Longrightarrow
1-e^{-\theta^{1/\alpha}t}%
\end{array}
\text{ \quad as }k\rightarrow\infty\text{.}%
\]
(A much simpler (piecewise affine) map for which hitting-time distributions of
a hyperbolic fixed point converge to $H_{\alpha,\theta}$ has been discussed in
\cite{BZ}.)
\end{example}

\begin{example}
[\textbf{Convergence to }$H^{\ast}$ \textbf{with }$\alpha=0$]\label{Ex_C}Let
$T=T_{0,\theta}$ with $\theta\in(0,1)$ and consider the sequence
$(E_{k})_{k\geq1}$ with $E_{k}:=E_{k}^{\prime}$. Then Theorem
\ref{T_SuffForCgeToHAlphaThetaStar} applies (with $\tau_{k}:=k$ and
$E_{k}^{\circ}:=E_{k}^{\prime}$), showing that%
\[%
\begin{array}
[c]{c}%
\mu_{E_{k}}(\kappa_{0,\theta}h(1)\theta(1-\theta)^{k}\,\log(\varphi_{E_{k}%
})\leq t)\Longrightarrow\frac{t}{1+t}\\
\mu_{Y}(\kappa_{0,\theta}h(1)\theta(1-\theta)^{k}\,\log(\varphi_{E_{k}})\leq
t)\Longrightarrow\frac{t}{1+t}%
\end{array}
\text{ \quad as }k\rightarrow\infty\text{.}%
\]

\end{example}

\begin{example}
[\textbf{Convergence to }$H_{\theta}^{\ast}$ \textbf{with }$\alpha=0$,
$\theta\in(0,1)$]\label{Ex_D}Let $T=T_{0,\theta}$ with $\theta\in(0,1)$ and
consider the sequence $(E_{k})_{k\geq1}$ with $E_{k}:=E_{k}^{\prime\prime}$.
Then Theorem \ref{T_SuffForCgeToHAlphaThetaStar} applies (with $\tau_{k}:=k$
and $E_{k}^{\circ}:=E_{k}^{\prime}$), showing that%
\[%
\begin{array}
[c]{c}%
\mu_{E_{k}}(\kappa_{0,\theta}h(1)(1-\theta)^{k}\,\log(\varphi_{E_{k}})\leq
t)\Longrightarrow\frac{(1-\theta)+\theta t}{1+\theta t}\\
\mu_{Y}(\kappa_{0,\theta}h(1)(1-\theta)^{k}\,\log(\varphi_{E_{k}})\leq
t)\Longrightarrow\frac{\theta t}{1+\theta t}%
\end{array}
\text{ \quad as }k\rightarrow\infty\text{.}%
\]

\end{example}%

%TCIMACRO{\TeXButton{VSs}{\vspace{0.2cm}}}%
%BeginExpansion
\vspace{0.2cm}%
%EndExpansion

\begin{remark}
We finally mention that hitting-time statistics for neighborhoods of a neutral
fixed point have been discussed in \cite{ZFundMath2}, which covers the above
maps $T=T_{\alpha,\theta}$ in case $\alpha\in(0,1]$, and applies to the
sequence $(F_{k})_{k\geq1}$ of cylinders shrinking to the point $x=0$ (which,
however, satisfy $\mu(F_{k})=\infty$ for all $k$).
\end{remark}%

%TCIMACRO{\TeXButton{VSs}{\vspace{0.2cm}}}%
%BeginExpansion
\vspace{0.2cm}%
%EndExpansion

\end{document}